\newcommand{\Q}{\mathbb{Q}}
\newcommand{\R}{\mathbb{R}}
\newcommand{\Z}{\mathbb{Z}}
\renewcommand{\H}{\mathbb{H}}
\newcommand{\F}{\mathbb{F}}
\newcommand{\Alg}{\text{\normalfont{Alg}}}
\newcommand{\Proj}{\text{\normalfont{Proj}}}
\newcommand{\hProj}{\text{\normalfont{hProj}}}
\newcommand{\Mod}{\text{\normalfont Mod}}
\DeclareMathOperator{\Aut}{Aut}
\DeclareMathOperator{\Hom}{Hom}
\DeclareMathOperator{\id}{id}
\DeclareMathOperator{\rank}{rank}
\DeclareMathOperator{\Ext}{Ext}
\DeclareMathOperator{\IM}{Im}
\DeclareMathOperator{\Ker}{Ker}
\DeclareMathOperator{\GL}{GL}
\DeclareMathOperator{\Um}{Um}
\DeclareMathOperator{\D}{D2}
\DeclareMathOperator{\SF}{SF}
\DeclareMathOperator{\PHT}{PHT}
\DeclareMathOperator{\HT}{HT}
\DeclareMathOperator{\FP}{FP}
\DeclareMathOperator{\FF}{F}
\newtheorem{thm}{Theorem}[section]
\newtheorem*{thm*}{Theorem}
\newtheorem{prop}[thm]{Proposition}
\newtheorem*{prop*}{Proposition}
\newtheorem{lemma}[thm]{Lemma}
\newtheorem{corollary}[thm]{Corollary}
\newtheorem*{corollary*}{Corollary}
\newtheorem{question}[thm]{Question}
\newtheorem*{question*}{Question}
\theoremstyle{definition}
\newtheorem*{theorem*}{Theorem}
\theoremstyle{remark}
\newtheorem{remark}[thm]{Remark}
\newtheorem*{remark*}{Remark}
\newenvironment{clist}[1]
{\begin{enumerate}[\normalfont #1]}
{\end{enumerate}}
\newcommand{\wh}{\widehat}
\newcommand{\wt}{\widetilde}
\newcommand{\xrightarrowdbl}[2][]{%
  \xrightarrow[#1]{#2}\mathrel{\mkern-14mu}\rightarrow
}
\begin{document}

\title[Projective modules and homotopy classification]{Projective modules and the homotopy classification of $(G,n)$-complexes}

\author{John Nicholson}
\address{Department of Mathematics, Imperial College London, London, SW7 2AZ, U.K.}

\email{john.nicholson@imperial.ac.uk}

%20C05 Group rings of finite groups and their modules
%55P15 Classification of homotopy type
%55U15 Chain complexes in algebraic topology
%57K20 2-dimensional topology (including mapping class groups of surfaces, Teichmüller theory, curve complexes, etc.)

\subjclass[2020]{Primary 55P15; Secondary 20C05, 55U15, 57K20}

\begin{abstract}
A $(G,n)$-complex is an $n$-dimensional CW-complex with fundamental group $G$ and whose universal cover is $(n-1)$-connected.
If $G$ has periodic cohomology then, for appropriate $n$, we show that there is a one-to-one correspondence between the homotopy types of finite $(G,n)$-complexes and the orbits of the stable class of a certain projective $\Z G$-module under the action of $\Aut(G)$. 
We develop techniques to compute this action explicitly and use this to give an example where the action is non-trivial.
\end{abstract}

\maketitle

\section{Introduction}

For a group $G$ and $n \ge 2$, a \textit{$(G,n)$-complex} is a connected $n$-dimensional CW-complex $X$ for which $\pi_1(X) \cong G$ and $\widetilde{X}$ is $(n-1)$-connected. Equivalently, it is the $n$-skeleton of a $K(G,1)$.
For example, a finite $(G,2)$-complex is equivalently a finite $2$-complex $X$ with $\pi_1(X) \cong G$. An example of a finite $(G,3)$-complex is a closed 3-manifold $M$ with $\pi_1(M) \cong G$ finite.
Given a group $G$ and $n \ge 2$, a finite $(G,n)$-complex exists if and only if $G$
has type $\FF_n$ in the sense of Wall \cite{Wa65}.

Let $\HT(G,n)$ be the set of homotopy types of finite $(G,n)$-complexes, which can be viewed as a graph with edges between each $X$ and $X \vee S^n$. It is well known that $\HT(G,n)$ is a tree \cite{Wh39}, i.e. a connected acyclic graph, and has a grading coming from $(-1)^n \chi(X)$ which takes a minimum value $\chi_{\min}(G,n)$. 
The problem of determining the structure of $\HT(G,n)$ as a tree has a long history which dates back to Cockcroft-Swan \cite{CS61} and Dyer-Sieradski \cite{DS73,DS75}. 

In the case of finite abelian groups, the structure of $\HT(G,n)$ has been classified through a series of articles by Metzler \cite{Me76}, Dyer-Sieradski \cite{SD79}, Browning \cite{Br79b} and Linnell \cite{Li93}. 
However, much less is known for non-abelian groups and an important class of examples are the groups with $k$-periodic cohomology, i.e. finite groups for which the Tate cohomology groups satisfy $\widehat{H}^{i}(G;\Z) \cong \widehat{H}^{i+k}(G;\Z)$ for all $i \in \Z$. 
For example, if $G$ is finite and $n$ is even, then it was shown by Browning \cite{Br79a} that $\chi(X) = \chi(Y)$ implies $X \vee S^n \simeq Y \vee S^n$ (see also \cite{HK93}). However, when $n$ is odd, this is known only when $G$ does not have $k$-periodic cohomology for  $k \mid n+1$ (see Question \ref{question:fork}).

The aim of this article is to make new progress towards the classification over groups with periodic cohomology, building upon work of Dyer \cite{Dy76} and Johnson \cite{Jo03a}.

\subsection{Main results}

Let $\PHT(G,n)$ be the tree of polarised homotopy types of finite $(G,n)$-complexes, i.e. the homotopy types of pairs $(X, \rho)$ where $\rho : \pi_1(X) \cong G$.

Let $G$ be a finite group and let $C(\Z G)$ denote the projective class group, i.e. the equivalence classes of finitely generated projective $\Z G$-modules where $P \sim Q$ if $P \oplus \Z G^i \cong Q \oplus \Z G^j$ for some $i, j$.
Note that a class $[P] \in C(\Z G)$ can be viewed as the set of (non-zero) projective $\Z G$-modules $P_0$ for which $P_0 \sim P$, and this has the structure of a graded tree with edges between each $P_0$ and $P_0 \oplus \Z G$. 
Let $T_G \le C(\Z G)$ denote the Swan subgroup (see Section \ref{ss:swan-modules}).
If $G$ has $k$-periodic cohomology, then the \textit{Swan finiteness obstruction} is an element $\sigma_k(G) \in C(\Z G)/T_G$ which vanishes if and only if there exists a finite CW-complex $X$ with $\pi_1(X) \cong G$ and $\wt X \simeq S^{k-1}$.

Recall that a finitely presented group $G$ has the \textit{D2 property} if every cohomologically 2-dimensional finite complex $X$ with $\pi_1(X) \cong G$ is homotopy equivalent to a finite 2-complex.

\begingroup
\renewcommand\thethm{\Alph{thm}}
\begin{thm} \label{thm:main-topological-I}
Let $G$ have $k$-periodic cohomology and let $n=ik$ or $ik-2$ for some $i \ge 1$. Then there is an injective map of graded trees
\[ \Psi: \PHT(G,n) \to [P_{(G.n)}]\] 
for any projective $\Z G$-module $P_{(G.n)}$ with $\sigma_{ik}(G) = [P_{(G.n)}] \in C(\Z G)/T_G$.
Furthermore, $\Psi$ is bijective if and only if $n \ge 3$ or if $n=2$ and $G$ has the {\normalfont D2} property.
\end{thm}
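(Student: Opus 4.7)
The plan is to construct $\Psi$ via the cellular chain complex of the universal cover. Given a finite polarised $(G,n)$-complex $X$, the complex $C_*(\widetilde X)$ of free $\Z G$-modules has $H_0(C_*) = \Z$, $H_j(C_*) = 0$ for $1 \le j \le n-1$, and $H_n(C_*) = \pi_n(X)$. Because $G$ has $k$-periodic cohomology, there is a projective resolution of $\Z$ of period $k$, and when $n = ik$ or $n = ik-2$ one can splice $C_*(\widetilde X)$ with an appropriate stretch of such a resolution (or of its dual) to produce a finite projective resolution of $\Z$ of length exactly $ik$. The top module of this resolution, call it $P(X)$, is projective, and by Schanuel's lemma its stable class in $C(\Z G)$ depends only on $X$ and reduces to $\sigma_{ik}(G)$ modulo $T_G$; this is essentially the content of Proposition \ref{prop:pht-to-alg}. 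I set $\Psi(X) = P(X)$. A polarised homotopy equivalence $X \simeq Y$ induces a chain homotopy equivalence $C_*(\widetilde X) \simeq C_*(\widetilde Y)$, which by Schanuel forces $P(X) \oplus \Z G^a \cong P(Y) \oplus \Z G^b$, so $\Psi$ is well-defined on $\PHT(G,n)$; and wedging with $S^n$ adds a free generator in top degree, so $\Psi$ sends the edge $X \leftrightarrow X \vee S^n$ to the edge $P(X) \leftrightarrow P(X) \oplus \Z G$, making $\Psi$ a map of graded trees.

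For injectivity, suppose $P(X) \cong P(Y)$ as $\Z G$-modules. The aim is to promote this isomorphism first to a chain homotopy equivalence $C_*(\widetilde X) \simeq C_*(\widetilde Y)$ (after perhaps stabilising by $\Z G$ summands, which corresponds geometrically to wedging with copies of $S^n$), and then to a genuine polarised homotopy equivalence $X \simeq Y$. The chain-level step proceeds by induction down the resolution: once the top modules are identified, one trivialises the differentials using the freeness of the remaining $C_j$ and standard cancellation. The geometric realisation of a chain homotopy equivalence then uses a Whitehead-style argument in dimension $n \ge 3$, or the D2 property in dimension $n = 2$.

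For surjectivity, given any projective $Q \in [P_{(G,n)}]$ one constructs a finite projective resolution of $\Z$ of length $ik$ whose top term is $Q$, truncates to an algebraic analogue of a $(G,n)$-complex, and realises it geometrically. For $n \ge 3$, each free algebraic $(G,n)$-complex can be realised as $C_*(\widetilde X)$ for a finite $(G,n)$-complex $X$ by standard obstruction theory. For $n = 2$, one only obtains a \emph{cohomologically} two-dimensional finite complex, and the statement that $\Psi$ is bijective in this case is literally the D2 property for $G$. The main obstacle will be the injectivity step: lifting a bare isomorphism of top projective modules to a chain homotopy equivalence requires genuine cancellation within the resolution rather than just stable equivalence, and one must simultaneously keep track of the polarisations $\pi_1(X) \cong G \cong \pi_1(Y)$ so that the resulting equivalence descends to $\PHT(G,n)$ and not merely to $\HT(G,n)$.
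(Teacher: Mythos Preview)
Your overall shape is right, but there are two concrete gaps, and they share a root cause.

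First, $P(X)$ is not well-defined as stated. Splicing $C_*(\widetilde X)$ with ``an appropriate stretch'' of a periodic resolution is not canonical: by Corollary~\ref{cor:image-of-chi} the projective modules that can appear as the top of such a splice fill out an entire $T_G$-orbit in $P(\Z G)$, and your Schanuel argument only pins down $P(X)$ up to \emph{stable} isomorphism, which is strictly weaker than specifying an element of $[P_{(G,n)}]$. The paper repairs this by fixing once and for all an $F \in \Proj^{ik}_{\Z G}(\Z,\Z)$ with $e(F)$ a prescribed representative of $\sigma_{ik}(G)$, and then using the shifting and duality bijections (Lemmas~\ref{lemma:h-yoneda} and~\ref{lemma:h-yoneda-duality}) to obtain an isomorphism of graded trees $\Alg(G,n) \cong \Proj(G,0;[P_{(G,n)}])$; the map $\Psi$ is then the unstable Euler class $\hat e$ of the resulting length-one extension, which is a chain homotopy invariant on the nose by Lemma~\ref{lemma:unstable-euler}.

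Second, and more seriously, your injectivity argument does not go through. You correctly flag it as the main obstacle, but ``trivialise the differentials using freeness and standard cancellation'' is not a proof: there is no general result lifting an isomorphism of a single module in a resolution to a chain homotopy equivalence, and allowing yourself to stabilise (as you suggest parenthetically) would yield only $X \vee rS^n \simeq Y \vee rS^n$, which is not injectivity. The paper's route is to reduce, via the bijection above, to showing that $\hat e : \Proj(G,0;[P]) \to [P]$ is injective (Theorem~\ref{thm:stab1-to-modules}). This is done in two steps: a direct argument for rank-one $P$ using $P \otimes \Q \cong \Q G$, and for higher rank the cancellation theorem for projective $0$-complexes (Theorem~\ref{thm:main-cancellation}), whose proof rests on the Hambleton--Kreck transitivity theorem for unimodular elements (Theorem~\ref{thm:HK}). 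That transitivity result is the real engine of the whole theorem, and nothing in your sketch substitutes for it.
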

\endgroup

\setcounter{thm}{\value{thm}-1}
\begin{remark}
(a) If $G$ satisfies the Eichler condition, then $[P_{(G,n)}]$ has \textit{cancellation} in the sense that $P_1 \oplus \Z G \cong P_2 \oplus \Z G$ implies $P_1 \cong P_2$ for all $P_1$, $P_2 \in [P_{(G,n)}]$ (see \cite{Ja68}). This implies that $\PHT(G,n)$ and $\HT(G,n)$ have cancellation in the sense that $X \vee S^n \simeq Y \vee S^n$ implies $X \simeq Y$, and recovers the main result of Dyer \cite{Dy76}.

\noindent (b) An equivalent statement appeared in \cite{Jo03a} in the case $n=2$, though the proof contained a small gap which was patched up in \cite{Ni19} using a theorem of Browning \cite{Br79a}.
\end{remark}

Our proof is based on the work of Hambleton-Kreck \cite{HK93} and is independent of \cite{Br79a, Jo03a}.
After establishing preliminaries in Sections \ref{section:ext} and \ref{section:swan-modules-and-projective-extensions}, we will prove general cancellation theorems for chain complexes of projective modules in Section \ref{section:cancellation-of-projective-extensions}. This suffices to prove Theorem \ref{thm:main-topological-I} due to the correspondence between $\PHT(G,n)$ and the tree of algebraic $n$-complexes (see Proposition \ref{prop:pht-to-alg}).
In Theorem \ref{thm:main-topological-I-detailed}, we give a detailed version of Theorem \ref{thm:main-topological-I} which contains an explicit description of the map $\Psi$. 

We then use of this description of $\Psi$ to determine the induced action of $\Aut(G)$ on $[P_{(G,n)}]$ via the bijection $\HT(G,n) \cong \PHT(G,n)/\Aut(G)$.
To state the induced action, consider the following two operations for $M$ a (left) projective $\Z G$-module: 
\begin{enumerate}
\item
If $\theta \in \Aut(G)$, then let $M_\theta$ be the $\Z G$-module whose abelian group is that of $M$ but with action $g \cdot x = \theta(g)x$ for $g \in G$, $x \in M$ (see Lemma \ref{lemma:theta-basic-facts}).
\item
If $r$ represents a class in $(\Z/|G|)^\times$ and $I \subseteq \Z G$ is the augmentation ideal, then $(I,r)$ is a projective $\Z G$-module. The tensor product $(I,r) \otimes M$ is a projective $\Z G$-module since $(I,r)$ is a two-sided ideal (see Lemma \ref{lemma:tensoring-with-swan-modules}).
\end{enumerate}

In Section \ref{section:homotopy-classification}, we will prove the following which is the main result of this article. Note that every projective $\Z G$-module has the form $P \oplus \Z G^r$ where $P$ has rank one and $r \ge 0$ (see Section \ref{ss:ZG-mod}).

\begingroup
\renewcommand\thethm{\Alph{thm}}
\begin{thm} \label{thm:main-topological-II}
Let $G$ have $k$-periodic cohomology and let $n=ik$ or $ik-2$ for some $i \ge 1$. Then $\Psi$ induces an injective map of graded trees
\[ \bar{\Psi}: \HT(G,n) \to [P_{(G.n)}]/\Aut(G),\]
where the action by $\theta \in \Aut(G)$ is given by 
\[ \theta : P \oplus \Z G^r  \mapsto ((I, \psi_k(\theta)^i) \otimes P_\theta) \oplus \Z G^r\] 
where $P$ has rank one, for some map $\psi_k : \Aut(G) \to (\Z/|G|)^\times$ which depends only on $G$ and $k$.
Furthermore, $\bar{\Psi}$ is bijective if and only if $n \ge 3$ or if $n=2$ and $G$ has the {\normalfont D2} property.
\end{thm}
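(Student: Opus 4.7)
The plan is to derive Theorem~\ref{thm:main-topological-II} directly from Theorem~\ref{thm:main-topological-I} by computing the induced action of $\Aut(G)$ on $[P_{(G,n)}]$. Since $\HT(G,n) = \PHT(G,n)/\Aut(G)$ with $\theta \in \Aut(G)$ acting on polarisations by $(X,\rho) \mapsto (X,\theta \circ \rho)$, the injective map $\Psi$ of Theorem~\ref{thm:main-topological-I} transports the $\Aut(G)$-action on its domain to an action on its image, and it suffices to (i) extend this to a natural action on all of $[P_{(G,n)}]$ making $\Psi$ equivariant, and (ii) identify that action with the formula $P \mapsto (I,\psi_k(\theta)^i) \otimes P_\theta$. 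Injectivity of $\bar{\Psi}$, together with the conditional bijectivity, will then follow formally by passing to $\Aut(G)$-quotients in Theorem~\ref{thm:main-topological-I}.

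The factor $P \mapsto P_\theta$ is the routine contribution. Using the explicit description of $\Psi$ given in Theorem~\ref{thm:main-topological-I-detailed}, which builds a projective module from the top module of an algebraic $n$-complex, replacing $\rho$ by $\theta\circ\rho$ has the effect of twisting the $\Z G$-module structure on every chain group by $\theta$. At the level of top projective modules this is precisely the functor of Lemma~\ref{lemma:theta-basic-facts}, which commutes with direct sums with $\Z G$ and therefore descends to the graded tree $[P_{(G,n)}]$.

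The Swan module factor $(I,\psi_k(\theta)^i)$ is the genuinely new contribution. Its origin is that $\Psi$ is defined relative to a choice of $ik$-periodic projective resolution of $\Z$ over $\Z G$, and twisting by $\theta$ transports such a choice to another whose periodicity generator in $\widehat{H}^{ik}(G;\Z) \cong \Z/|G|$ differs by a unit. Accordingly, I would define $\psi_k \colon \Aut(G) \to (\Z/|G|)^\times$ via the action of $\Aut(G)$ on a fixed generator of $\widehat{H}^k(G;\Z)$, so that the generator of $\widehat{H}^{ik}(G;\Z)$ induced by the $i$-fold iterate is acted on by $\psi_k(\theta)^i$. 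Using the machinery of Sections~\ref{section:swan-modules-and-projective-extensions} and~\ref{section:cancellation-of-projective-extensions}, together with Lemma~\ref{lemma:tensoring-with-swan-modules}, the change of periodicity generator by a unit $r$ corresponds on the top projective module to tensoring with the Swan module $(I,r)$; combining with the $P\mapsto P_\theta$ factor produces the asserted formula.

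The hard part will be the precise matching in the last step: verifying that the cohomologically defined $\psi_k$ agrees with the unit produced by the Swan module twist, and, crucially, that the formula lands in $[P_{(G,n)}]$ rather than only in $C(\Z G)/T_G$. This is exactly why $\psi_k(\theta)^i$ (rather than $\psi_k(\theta)$) appears, since the class $[P_{(G,n)}]$ represents $\sigma_{ik}(G)$ and so corresponds to the $i$-fold iterate of the $k$-periodic resolution; the required equivariance statement is essentially $\theta^{\ast}\sigma_{ik}(G) = [(I,\psi_k(\theta)^i)]\cdot\sigma_{ik}(G)$ in $C(\Z G)/T_G$. Once these identifications are in place, the injectivity and conditional bijectivity clauses of $\bar{\Psi}$ follow from the corresponding clauses of Theorem~\ref{thm:main-topological-I} after quotienting by $\Aut(G)$.
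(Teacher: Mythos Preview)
Your proposal is correct and follows essentially the same route as the paper: define $\psi_k$ via the $\Aut(G)$-action on a periodicity generator (the paper's Lemma~\ref{lemma:theta-action=pullback}), reduce $\psi_{ik}$ to $\psi_k^i$ through Yoneda products (Lemma~\ref{lemma:theta-action-powers}), and use Lemma~\ref{lemma:pullback=swan-module-action} to turn the resulting pullback by $m_{\psi_k(\theta)^i}$ into a Swan-module tensor, combined with the twist $P\mapsto P_\theta$. The paper executes this by explicitly tracing $E\mapsto E_\theta$ through the two isomorphisms $(-\circ F)^{-1}$ and $\ast\circ\Psi_F$ separately (Theorems~\ref{thm:action-k|n} and~\ref{thm:action-k|n+2}), which automatically keeps everything at the level of actual modules in $[P_{(G,n)}]$ rather than in $C(\Z G)/T_G$; your worry about ``landing in $[P_{(G,n)}]$'' disappears once you work with the unstable Euler class $\widehat e$ throughout, as the paper does.
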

\endgroup

This reduces the problem of determining when cancellation occurs in the homotopy trees to the purely algebraic problem of determining cancellation for $[P]$ and $[P]/\Aut(G)$ which will be dealt with in  \cite{Ni20b}.

\subsection{Computing the action of $\Aut(G)$}

After proving Theorems \ref{thm:main-topological-I} and \ref{thm:main-topological-II}, the remainder of this article will be devoted to exploring the action of $\Aut(G)$ on $[P_{(G,n)}]$. This includes establishing some general theory in preparation for the more detailed computations in \cite{Ni20b}.

Firstly, and perhaps somewhat surprisingly, we could find no example where the $\Aut(G)$-action described in Theorem \ref{thm:main-topological-II} does not take the form of the simpler action $P \mapsto P_\theta$. In all examples computed, we had $(I, \psi_k(\theta)) \cong \Z G$ which implies that $(I,\psi_k(\theta)^i) \cong \Z G$. If $P \oplus \Z G^r \in [P_{(G,n)}]$ where $P$ has rank one, then this implies that $\theta(P) \cong P_{\theta} \oplus \Z G^r \cong (P \oplus \Z G^r)_{\theta}$. In particular, we have that $\theta(P) \cong P_\theta$ for all $P \in [P_{(G,n)}]$.
We therefore ask the following:

\begingroup
\renewcommand\thethm{\ref{question:is-swan-free}}
\begin{question} 
Does there exist $G$ with $k$-periodic cohomology and $\theta \in \Aut(G)$ for which $(I,\psi_k(\theta))$ is not free?	
\end{question}
\endgroup

There are two approaches to finding examples where $(I,\psi_k(\theta))$ is not free. The first is to find an example where $(I,\psi_k(\theta))$ is not even stably free. It was shown by Dyer \cite[p276]{Dy76} and Davis \cite{Da83} that $(I,\psi_k(\theta))$ is stably free when $\sigma_k(G)=0$. Davis asked whether this also holds when $\sigma_k(G) \ne 0$ \cite[p488]{Da83}. The second approach is to find an example where $(I,\psi_k(\theta))$ is stably free but not free. This is likely to be difficult since the general question of whether $(I,r)$ can be stably free but not free is still open and dates back to Wall's 1979 Problems List \cite[Problem A5]{Wa79b}.

In Section \ref{section:Milnor-squares}, we develop a general method to compute the action $P \mapsto P_\theta$. We will then use this to give the following example where the action is non-trivial.
Let $Q_{4n}$ denote the quaternion group of order $4n$, which has $4$-periodic cohomology. Since $\sigma_4(Q_{4n})=0$, we can take $[P_{(Q_{4n},2)}] = [\Z Q_{4n}] = \bigcup_{r \ge 1} \SF_r(\Z Q_{4n})$ where $\SF_r(\Z Q_{4n})$ is the set of stably free $\Z Q_{4n}$-modules of rank $r \ge 1$.
As above, let $\theta \in \Aut(Q_{4n})$ act on $[\Z Q_{4n}]$ by $\theta : P \mapsto (I, \psi_4(\theta)^i) \otimes P_\theta$ for some $i \ge 1$. We show:

\begingroup
\renewcommand\thethm{\ref{thm:main-example}}
\begin{thm}
$\Aut(Q_{24})$ acts non-trivially on $[\Z Q_{24}]$. More specifically, we have $|\SF_1(\Z Q_{24})| = 3$ and $|\SF_1(\Z Q_{24})/\Aut(Q_{24})| = 2$. 
\end{thm}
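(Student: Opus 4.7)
The strategy is to carry out an explicit computation of the $\Aut(Q_{24})$-action on $\SF_1(\Z Q_{24})$ via the Milnor-square description of stably free modules. Since $Q_{24}$ embeds in the unit quaternions and so acts freely on $S^3$, we have $\sigma_4(Q_{24}) = 0$, so as noted in the statement we may take $[P_{(Q_{24},2)}] = [\Z Q_{24}] = \bigcup_{r \ge 1} \SF_r(\Z Q_{24})$. Applying Theorem \ref{thm:main-topological-II} with $k=4$, $i=1$, and $n = ik-2 = 2$ identifies the $\Aut(Q_{24})$-action as $\theta : P \mapsto (I, \psi_4(\theta)) \otimes P_\theta$, and both counting claims $|\SF_1(\Z Q_{24})| = 3$ and $|\SF_1(\Z Q_{24})/\Aut(Q_{24})| = 2$ concern only the $r=1$ stratum.

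The count $|\SF_1(\Z Q_{24})| = 3$ is Swan's original calculation. A clean derivation writes $\Z Q_{24}$ as a pullback of integral orders in its rational Wedderburn components; the Milnor--Bass--Swan theory then identifies $\SF_1(\Z Q_{24})$ with a double coset of units in the corner rings modulo those lifting to units of $\Z Q_{24}$, and a direct computation gives cardinality three. I would either invoke this directly or re-derive it using the tools of Section \ref{section:Milnor-squares}. For the action, note first that inner automorphisms act trivially: the map $m \mapsto gm$ is a $\Z Q_{24}$-isomorphism $M \to M_{\text{conj}_g}$. So it suffices to handle a generator of $\text{Out}(Q_{24})$.

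For such a generator $\theta$, I would compute $\psi_4(\theta) \in (\Z/24)^\times$ using its explicit definition in Section \ref{section:homotopy-classification}, and verify that $(I, \psi_4(\theta))$ is free, matching the pattern noted above Question \ref{question:is-swan-free}. The action then reduces to $P \mapsto P_\theta$. The naturality of the Milnor-square construction, developed in Section \ref{section:Milnor-squares}, should translate this twist into an explicit permutation of the three double-coset representatives; counting orbits then completes the argument. The main obstacle is this explicit matching step: one must produce coordinated representatives for the three stably free modules and for $\theta$'s action on the corner ring units. Since the free module $\Z Q_{24}$ is automatically fixed, yielding exactly two orbits amounts to showing that the outer automorphism swaps the two non-free classes, which is the core computation.
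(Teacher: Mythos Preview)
Your strategy is correct and is essentially the paper's own approach: reduce the action to $P \mapsto P_\theta$, pass to a Milnor-square description of $\SF_1(\Z Q_{24})$ (the paper goes via $\Lambda = \Z Q_{24}/(x^6+1)$ and the square with corners $\H_\Z$, $\Z[\zeta_{12},j]$, $\H_{\F_3}$, where the three classes are $[1],[1+j],[1+k]$), and then check directly which automorphisms swap the two non-free classes using Corollary~\ref{cor:milnor-square-action}. Two minor remarks: the formula $\psi_4(\theta_{a,b}) = a^2$ from Section~\ref{section:action-of-Aut(G)} already gives $\psi_4 \equiv 1$ in $(\Z/24)^\times$, so $(I,\psi_4(\theta))\cong \Z Q_{24}$ with no separate freeness argument needed; and $\mathrm{Out}(Q_{24})\cong (\Z/2)^2$ is not cyclic, so ``a generator of $\mathrm{Out}(Q_{24})$'' should be replaced by a check on a generating set---in the paper's parametrisation only the $\theta_{a,b}$ with one parity of $b$ actually perform the swap.
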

\endgroup

This is in contrast to the case $Q_{4n}$ for $2 \le n \le 5$ where  $|\SF_1(\Z Q_{4n})| = 1$ and the case $Q_{28}$ where $|\SF_1(\Z Q_{28})| = |\SF_1(\Z Q_{28})/\Aut(Q_{28})| = 2$ (see Figure \ref{figure:quaternion-table}).

\subsection{Overview of the wider project}

This article is the first of a two-part series (followed by \cite{Ni20b}) in which we explore the classification of finite $(G,n)$-complexes over groups with periodic cohomology. 
These results are motivated by the following.

\subsubsection*{Wall's D2 problem for groups with $4$-periodic cohomology}

In the language above, the D2 problem asks whether every finitely presented group $G$ has the D2 property. This dates back to Wall's 1965 paper on finiteness conditions \cite{Wa65} and is currently open.
The case where $G$ has 4-periodic cohomology was proposed to contain a counterexample to the D2 problem in 1977 \cite{Co77}, and has since been studied extensively.
In this case, Johnson proved Theorem \ref{thm:main-topological-I} when $n=2$ and, using results of Swan \cite{Sw83}, he established the D2 property for many new groups \cite{Jo03a}. 
In \cite{Ni18,Ni19}, we extended these results and determined when $\PHT(G,2)$ has cancellation.

In the case where $\PHT(G,2)$ has non-cancellation, the D2 property has only been proven for $Q_{28}$ (see \cite{MP19,Ni19}). This motivated Theorem \ref{thm:main-topological-II} in the case $n=2$ since one imagines it might be easier to prove that $\bar{\Psi}$ is bijective rather than $\Psi$. The question of when $\HT(G,2)$ has cancellation is answered in \cite[Theorem A]{Ni20b}.

\subsubsection*{Stable and unstable classification of manifolds}

If $X$ is a finite $(G,n)$-complex, then there exists an embedding $i : X \hookrightarrow \R^{2n+1}$. The boundary of a smooth regular neighbourhood of $i$ gives a smooth closed $2n$-manifold $M(X)$. If $X$ is determined up to simple homotopy, then $M(X)$ is well-defined up to $s$-cobordism which coincides with homeomorphism in the case where $G$ is finite by work of Freedman. Furthermore, we have that $M(X \vee S^n) \cong M(X) \# (S^n \times S^n)$.
This can be found in \cite[Section 5]{BC+21}.

In 1984, Kreck-Schafer used this to construct smooth closed $4n$-manifolds $M_1$ and $M_2$ for every $n \ge 1$ such that $M_1 \# (S^{2n} \times S^{2n}) \cong M_2 \# (S^{2n} \times S^{2n})$ are diffeomorphic but $M_1 \not \simeq M_2$ \cite{KS84}. Their examples have the form $M(X_i)$ where the $X_i \in \HT(G,n)$ are the non-cancellation examples for $G$ abelian found by Metzler, Dyer and Sieradski \cite{Me76, Si77, SD79}.
Recently, Conway-Crowley-Powell-Sixt constructed examples of both simply-connected $M_i$ \cite{CCPS21a} and infinitely many $M_i$ \cite{CCPS21b} for all $n \ge 2$. However, the examples of Kreck-Schafer remain the only known examples in dimension 4.
In classifying $\HT(G,n)$ when $G$ has periodic cohomology, we hope to create a second family of examples both in dimension 4 and in higher dimensions. 

\subsection*{Conventions}

All rings $R$ will be assumed to have a multiplicative identity and all $R$-modules will be assumed to be finitely generated left $R$-modules.

Recall that groups with periodic cohomology are necessarily finite. For most of this article, we will therefore restrict to the case where $G$ is a finite group. However, we will briefly consider finitely presented groups more generally at the start of Sections \ref{section:classification-of-Alg(G,n)} and \ref{section:homotopy-classification}.

\subsection*{Acknowledgements}
I would like to thank F. E. A. Johnson for his guidance and many interesting conversations.
I would also like to thank Ian Hambleton, Jonathan Hillman and Wajid Mannan for helpful comments and references. 
Finally, I would like to thank the anonymous referee for their careful reading and suggestions which improved the exposition of this article.
This work was supported by the UK Engineering and Physical Sciences Research Council (EPSRC) grant EP/N509577/1 as well as the Heilbronn Institute for Mathematical Research.

%%%%%%%%%%
\section{Extensions of modules} \label{section:ext}

Let $R$ be a ring. Recall our convention that all $R$-modules are assumed to be finitely generated left $R$-modules.
 For $R$-modules $A$ and $B$, define $\Ext_{R}^{n}(A,B)$ to be the set of exact sequences
\[ E = ( 0 \to B \xrightarrow[]{i} E_{n-1} \xrightarrow[]{\partial_{n-1}} E_{n-2} \xrightarrow[]{\partial_{n-2}} \cdots \xrightarrow[]{\partial_{2}} E_1 \xrightarrow[]{\partial_{1}} E_0 \xrightarrow[]{\varepsilon} A \to 0)\]
for $R$-modules $E_i$ considered up to congruence, i.e. the equivalence relation generated by \textit{elementary congruences} which are chain maps of the form
\[
\begin{tikzcd}[row sep=.5cm, column sep=small]
E \ar[d,"\varphi"] \\
E'
\end{tikzcd}
= \left( 
\begin{tikzcd}[row sep=.5cm, column sep=small]
0 \ar[r] & B \ar[r] \ar[d,"\id"] & E_{n-1} \ar[r] \ar[d,"\varphi_{n-1}"] & \cdots \ar[r] & E_0 \ar[r] \ar[d,"\varphi_0"] & A \ar[r] 
\ar[d,"\id"] & 0  \\
0 \ar[r] & B \ar[r] & E_{n-1}' \ar[r]  & \cdots \ar[r] & E_0' \ar[r]  & A \ar[r] & 0 
\end{tikzcd}
\right).
\]
That is, two extensions $E$ and $E'$ are \textit{congruent} if there exists extensions $E^{(i)}$ for $0 \le i \le n$ such that $E = E^{(0)}$, $E' = E^{(n)}$ and, for $i \le n-1$, there exists an elementary congruence of the form $\varphi: E^{(i)} \to E^{(i+1)}$ or $\varphi : E^{(i+1)} \to E^{(i)}$.

We write extensions in $\Ext_{R}^{n}(A,B)$ as $E=(E_*,\partial_*)$ where the maps $i : B \to E_{n-1}$ and $\varepsilon: E_0 \to A$ are understood. We will often write $\partial_i = \partial_i^E$, $i=i_E$ and $\varepsilon = \varepsilon_E$ when the need arises to distinguish different extensions.

This is an abelian group under Baer sum, and coincides with the usual definition of $\Ext_{R}^n(A,B)$ \cite[Section 3.4]{We94}. 
We will assume familiarity with the basic operations on extensions such as pullback, pushout and Yoneda product \cite[Section 24]{Jo03a}.

Worth emphasising however is the operation of stabilisation. If $E = (E_*,\partial_*) \in \Ext^n_{R}(A,B)$, then define the \textit{stabilised complex} $E \oplus R \in \Ext_{R}^{n}(A, B \oplus R)$ by
\[
	E \oplus R =( 0 \to B \oplus R \xrightarrow[]{\cdot\text{$\left( \begin{smallmatrix} i & 0 \\ 0 & 1 \end{smallmatrix}\right)$}} E_{n-1} \oplus R \xrightarrow[]{\cdot\text{$\left( \begin{smallmatrix} \partial_{n - 1} \\ 0 \end{smallmatrix}\right)$}} E_{n-2} \to \cdots \to E_0 \to A \to 0).
\]
This gives a well-defined map of abelian groups
\[ - \oplus R : \Ext^n_{R}(A,B) \to \Ext^{n}_{R}(A,B \oplus R).\]

Let $\Proj_{R}^{n}(A,B)$ denote the subset of $\Ext_{R}^n(A,B)$ consisting of extensions $(P_*,\partial_*)$ with the $P_i$ projective. This is closed under Baer sum, and so is a subgroup, and is also preserved by pullbacks, pushouts, Yoneda product and stabilisation.
The following is a consequence of the co-cycle description of $\Ext$ \cite[Lemma 1.1]{Wa79a}.

\begin{lemma}[Shifting] \label{lemma:yoneda}
If $A, B, C, D$ are $R$-modules, $E \in \Proj_{R}^k(B,C)$ and $k,n,m \ge 1$, then Yoneda product induces bijections
\[ - \circ E : \Ext_{R}^{n}(C,D) \to \Ext_{R}^{n+k}(B,D), 
\quad E \circ - : \Ext_{R}^{m}(A,B) \to \Ext_{R}^{m+k}(A,C).\]
\end{lemma}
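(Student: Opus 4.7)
The plan is to deduce both bijections from the standard dimension-shifting principle in homological algebra, applied to the projective extension $E: 0 \to C \to E_{k-1} \to \cdots \to E_0 \to B \to 0$. The key observation is that since each $E_j$ is projective, $E$ can be spliced with any projective resolution $Q_* \to C$ to produce a projective resolution $R_* \to B$ with $R_j = E_j$ for $0 \le j < k$ and $R_{j+k} = Q_j$ for $j \ge 0$.

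For the first bijection $-\circ E$, I would compute $\Ext_R^{n+k}(B, D)$ using this resolution. Since $n \ge 1$, an $(n+k)$-cochain on $R_*$ is a map $Q_n \to D$, and both the cocycle and coboundary conditions involve only the $Q$-part of the resolution (namely $R_{n+k\pm 1} = Q_{n\pm 1}$). Hence the identification $R_{n+k}=Q_n$ induces an isomorphism
\[ \Ext_R^n(C, D) = H^n(\Hom_R(Q_*, D)) \cong H^{n+k}(\Hom_R(R_*, D)) = \Ext_R^{n+k}(B, D). \]
Unwinding the cocycle-to-extension correspondence of \cite[Lemma 1.1]{Wa79a}, a cocycle $f: Q_n \to D$ representing $F \in \Ext_R^n(C, D)$ produces an extension of $B$ by $D$ by pushing out $Q_{n-1} \to Q_n$ along $f$ and then splicing with the remaining terms of $R_*$; this reassembles exactly as the Yoneda splice $F \circ E$, identifying the isomorphism above with $-\circ E$.

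For the second bijection $E \circ -$, I would run the dual argument. Equivalently, one can decompose $E$ into short exact sequences
\[ 0 \to K_{i-1} \to E_{k-i} \to K_i \to 0 \]
(with $K_0 = C$, $K_k = B$), apply the long exact sequences of $\Ext_R^*(A, -)$, and observe that $\Ext_R^m(A, E_j) = 0$ for $m \ge 1$ since $E_j$ is projective. The connecting homomorphisms then yield isomorphisms $\Ext_R^m(A, K_i) \cong \Ext_R^{m+1}(A, K_{i-1})$ whose $k$-fold composition is $E \circ -$.

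The main obstacle is verifying that these natural isomorphisms are genuinely the Yoneda product with $E$. This reduces to the base case $k=1$, where the connecting homomorphism associated to a short exact sequence is by definition Yoneda product with that sequence; the general case follows by iterating. Modulo this standard fact, which is the content of Wall's cited lemma, the proof is essentially immediate from the vanishing of $\Ext_R^j(P, -)$ and $\Ext_R^j(-, D)$ for $j \ge 1$ when $P$ is projective, combined with the fact that $E$ consists of projectives in the middle.
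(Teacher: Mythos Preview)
Your treatment of the first bijection $-\circ E$ is correct and is essentially the cocycle/dimension-shifting argument the paper has in mind when it cites Wall: splicing $E$ onto a projective resolution of $C$ yields one of $B$, and the shift works because $\Ext_R^{\ge 1}(P,D)=0$ when $P$ is projective.

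There is a genuine error in your treatment of the second bijection $E\circ -$. You claim that $\Ext_R^m(A,E_j)=0$ for $m\ge 1$ ``since $E_j$ is projective'', but projectivity of $P$ gives vanishing of $\Ext_R^{\ge 1}(P,-)$ in the \emph{first} variable, not of $\Ext_R^{\ge 1}(-,P)$ in the second; the latter is what your long-exact-sequence argument needs, and it would require the $E_j$ to be injective. Your phrase ``run the dual argument'' has the same defect: dualising the first proof would splice $E$ into an injective coresolution of $B$, which again needs injective $E_j$. In fact the second bijection is false at the stated level of generality: take $R=\Z$, $E=(0\to\Z\xrightarrow{2}\Z\to\Z/2\to 0)\in\Proj^1_\Z(\Z/2,\Z)$, $A=\Z/2$ and $m=1$; then $\Ext^1_\Z(\Z/2,\Z/2)\cong\Z/2$ while $\Ext^2_\Z(\Z/2,\Z)=0$, so $E\circ -$ cannot be a bijection. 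The paper gives no proof here, and in its later applications the second map is only invoked in settings where projectives are injective relative to the class of lattices involved (cf.\ Lemma~\ref{lemma:relative-inj}); in those settings your connecting-map argument can be repaired, but not for arbitrary $R$ and $A$ as in the lemma's hypotheses.
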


This can be viewed as a sort of cancellation theorem for extensions up to congruence in the sense that $F \circ E \cong F' \circ E$ or $E \circ F \cong E \circ F'$ implies that $F \cong F'$.

A simple consequence of this is the following lemma. This can be interpreted as a kind of duality theorem for projective extensions.

\begin{lemma}[Duality] \label{lemma:yoneda-duality}
If $A, B, C$ are $R$-modules, $F \in \Proj_{R}^k(A,C)$ and $k > n \ge 1$, then there are bijections
\begin{align*} \Psi_F : & \,\, \Proj_{R}^{n}(A,B) \to \Proj_{R}^{k-n}(B,C),  &\Psi_F^{-1} : & \,\, \Proj_{R}^{k-n}(B,C) \to \Proj_{R}^{n}(A,B).\\
  & \,\,E \mapsto (- \circ E)^{-1}(F) && \,\,E' \mapsto (E' \circ -)^{-1}(F)\end{align*}
\end{lemma}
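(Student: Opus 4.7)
My plan is to use the Shifting Lemma to obtain the bijections at the level of $\Ext$ groups, then verify both that these preimages admit projective representatives and that the two maps are mutually inverse. For $E \in \Proj_R^n(A,B)$, the Shifting Lemma produces a bijection $- \circ E : \Ext_R^{k-n}(B,C) \to \Ext_R^k(A,C)$, so $(- \circ E)^{-1}([F])$ is a well-defined element of $\Ext_R^{k-n}(B,C)$ (and symmetrically for $\Psi_F^{-1}$). The mutual-inverse property is then automatic: tautologically $\Psi_F(E) = [G]$ if and only if $G \circ E \cong F$, which is also exactly the condition $\Psi_F^{-1}(G) = [E]$, so once both maps are shown to land in the $\Proj$ subsets they are inverses of one another.

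The substantive step is showing that these preimages actually lie in the $\Proj$ subsets. By symmetry I focus on $\Psi_F(E)$. Writing $F = (0 \to C \to Q_{k-1} \to \cdots \to Q_0 \to A \to 0)$ and $K = \ker(\partial_{n-1}^F : Q_{n-1} \to Q_{n-2})$, I decompose $F = F_1 \circ F_2$ with $F_2 \in \Proj_R^n(A, K)$ and $F_1 \in \Proj_R^{k-n}(K, C)$. Projectivity of the modules of $E$ allows the standard comparison theorem to lift $\id_A$ to a chain map $\psi : E \to F_2$ whose leftmost component is a morphism $\beta := \psi_* : B \to K$. Naturality of Yoneda product with respect to pushout and pullback then yields
\[
F \cong F_1 \circ F_2 \cong F_1 \circ \beta_* E \cong \beta^* F_1 \circ E,
\]
so $\Psi_F(E)$ is represented by the pullback $\beta^* F_1 \in \Ext_R^{k-n}(B, C)$.

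The main obstacle is that $\beta^* F_1$ need not itself be a projective extension (the pulled-back term $Q_n \times_K B$ is not projective in general), so the remaining step is to produce a genuinely projective extension in the same congruence class. I plan to achieve this via iterated Schanuel's lemma: since $E$ and $F_2$ are length-$n$ projective extensions of $A$, their leftmost modules $B$ and $K$ are stably projective-equivalent, and by suitably stabilising $E$ and $F_2$ by trivial projective summands one can arrange $\beta$ to be a split injection with projective cokernel, at which point $\beta^* F_1$ becomes congruent to a genuinely projective extension. The hypothesis $k > n$ is used here to ensure $k - n \geq 1$, giving $F_1$ the positive length needed to absorb this stabilisation.
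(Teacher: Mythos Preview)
The paper does not give a proof of this lemma; it simply asserts it as a ``simple consequence'' of the Shifting Lemma (Lemma~\ref{lemma:yoneda}). Your approach is considerably more careful than this: you correctly observe that the Shifting Lemma as stated only gives bijections at the level of $\Ext$, so the substantive content of the Duality Lemma is that $(-\circ E)^{-1}(F)$ actually lies in $\Proj_R^{k-n}(B,C)$ rather than merely in $\Ext_R^{k-n}(B,C)$. Your identification of this class with $\beta^*F_1$ (via the comparison map $\psi:E\to F_2$ and the standard identity $F_1\circ\beta_*E\cong\beta^*F_1\circ E$) is correct, and the mutual-inverse observation is fine once both maps are known to land in $\Proj$.

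The gap is in your final step. You claim that ``by suitably stabilising $E$ and $F_2$ by trivial projective summands one can arrange $\beta$ to be a split injection with projective cokernel'', but this is not justified. Stabilising $E$ at the top changes $B$, so you would no longer be computing $\Psi_F$ on the given $E$; stabilising $E$ at interior positions leaves $B$ fixed and only perturbs $\beta$ within its chain-homotopy class, i.e.\ by maps of the form $s\circ i_E$ for arbitrary $s:P_{n-1}\to K$, which is not enough freedom to force projective cokernel. Stabilising $F$ at positions $(n,n{-}1)$ does enlarge $K$ to $K\oplus P$, but the resulting comparison map $\tilde\beta:B\to K\oplus P$ is still constrained by the lifting; for example, taking $P=P_{n-1}$ and $\tilde\beta=(\beta,i_E)$ gives a cokernel which is an extension of $\im(\partial_{n-1}^E)$ by $K$, not obviously projective. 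Schanuel's abstract isomorphism $B\oplus P'\cong K\oplus P''$ does not by itself guarantee that any particular comparison map realises the inclusion of a direct summand. The idea is salvageable --- one can, for instance, build a projective representative of $\Psi_F(E)$ directly by a mapping-cone construction over $\psi$, or iterate the stabilisation of $F$ across all positions $0,\dots,n$ with a carefully chosen lift --- but as written this step is incomplete.
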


We now turn our attention to an equivalence relation on $\Ext_{R}^n(A,B)$ which is weaker than congruence. For $R$-modules $A, B$ and $E, E' \in \Ext_{R}^n(A,B)$, a chain map $\varphi : E \to E'$ is said to be a \textit{chain homotopy equivalence} if the restriction to the un-augmented chain complexes $\varphi: (E_*,\partial_*)_{0 \le * <n} \to (E'_*,\partial_*')_{0 \le * <n}$ is a chain homotopy equivalence. 

If $E, E' \in \Proj_{R}^n(A,B)$ then, since a chain map between projective chain complexes is a chain homotopy equivalence if and only if it is a homology equivalence \cite[Theorem 46.6]{Jo03a}, a chain homotopy equivalence $\varphi : E \to E'$ can equivalently be defined as a chain map of the form
\[
\begin{tikzcd}[row sep=.5cm, column sep=small]
E \ar[d,"\varphi"] \\
E'
\end{tikzcd}
= \left( 
\begin{tikzcd}[row sep=.5cm, column sep=small]
0 \ar[r] & B \ar[r] \ar[d,"\varphi_B"] & P_{n-1} \ar[r] \ar[d,"\varphi_{n-1}"] & \cdots \ar[r] & P_0 \ar[r] \ar[d,"\varphi_0"] & A \ar[r] 
\ar[d,"\varphi_A"] & 0  \\
0 \ar[r] & B \ar[r] & P_{n-1}' \ar[r]  & \cdots \ar[r] & P_0' \ar[r]  & A \ar[r] & 0 
\end{tikzcd}
\right)
\]
where $\varphi_A$ and $\varphi_B$ are $R$-module isomorphisms. When convenient, we will often abbreviate this to $\varphi=(\varphi_B,\varphi_{n-1},\cdots,\varphi_0,\varphi_A)$.
It follows easily that a congruence is a chain homotopy equivalence. We define $\hProj_{R}^n(A,B)$ to be set of equivalence classes in $\Proj_{R}^n(A,B)$ up to chain homotopy equivalences, which is an abelian group under Baer sum.

For special choices of modules, the shifting lemma and the duality lemma also hold for chain homotopy equivalences. We define $\Z$ to be the $R$-module with underlying abelian group $\Z$ and trivial $R$-action, i.e. $r \cdot n = n$ for all $r \in R, n \in \Z$.

\begin{lemma}[Shifting] \label{lemma:h-yoneda}
If $A, B$ are $R$-modules, $F \in \Proj_{R}^k(\Z,\Z)$ and $n,m,k \ge 1$, then Yoneda product induces bijections
\[ - \circ F : \hProj_{R}^{n}(\Z, A) \to \hProj_{R}^{n+k}(\Z,A),\quad F \circ - : \hProj_{R}^{m}(B,\Z) \to \hProj_{R}^{m+k}(B,\Z).\]
\end{lemma}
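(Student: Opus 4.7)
My plan is to reduce the claim to Lemma \ref{lemma:yoneda} (the $\Ext$-level shifting) by identifying the chain homotopy equivalence classes of projective extensions with orbits of a natural action of $\Aut_R(\Z) \times \Aut_R(A)$ on $\Ext_R^n(\Z, A)$, and then verifying that Yoneda product with $F$ is equivariant under this action.

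The main preliminary step is the identification $\hProj_R^n(\Z, A) = \Ext_R^n(\Z, A)/(\Aut_R(\Z) \times \Aut_R(A))$, where $(a, b)$ acts by $[E] \mapsto [b_* a^* E]$. Given a chain homotopy equivalence $\varphi: E \to E'$ with iso ends $(\varphi_A, \varphi_\Z)$, I would compose with the canonical twist $E' \to (\varphi_A^{-1})_* (\varphi_\Z^{-1})^* E'$ (identity on every module, with $(\varphi_A^{-1}, \varphi_\Z^{-1})$ on the ends) to obtain a chain map $E \to (\varphi_A^{-1})_* (\varphi_\Z^{-1})^* E'$ with identity ends, which is an elementary congruence by definition. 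The converse direction is immediate since each twist map is itself a chain homotopy equivalence.

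Equivariance of Yoneda product with $F$ is then verified directly: for $b \in \Aut_R(A)$ one has $b_*(E \circ F) = (b_* E) \circ F$ on the nose, while for $a \in \Aut_R(\Z)$, $a^*(E \circ F) = E \circ (a^* F)$, which is congruent to $(a^* E) \circ F$ since both represent $a \cdot [E] \cdot [F]$ in $\Ext_R^{n+k}(\Z, A)$ by bilinearity of Yoneda product. Passing to orbits, the bijection of Lemma \ref{lemma:yoneda} descends to the required bijection $- \circ F: \hProj_R^n(\Z, A) \to \hProj_R^{n+k}(\Z, A)$, and the second statement follows by the analogous argument with the roles of source and target of the extensions swapped.

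I expect the main subtlety to be the orbit characterization in the second paragraph; the naive approach of extracting a chain map $\psi: E \to E'$ directly from a chain homotopy equivalence $\Psi: E \circ F \to E' \circ F$ would fail, since the scalar $\psi$ induces on the splicing copy of $\Z$ is only determined modulo the annihilator of $[F]$ in $\Ext_R^k(\Z, \Z)$ and so need not be $\pm 1$ in general, whereas the $\Ext$-level argument sidesteps this issue by working purely with Baer sum and Yoneda composition on congruence classes.
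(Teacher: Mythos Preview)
Your argument is correct and follows the same underlying strategy as the paper---reduce to the congruence-level bijection of Lemma~\ref{lemma:yoneda} by accounting for the automorphisms on the ends---but the paper's execution is considerably more direct. Rather than setting up the orbit identification $\hProj_R^n(\Z,A)\cong\Proj_R^n(\Z,A)/(\Aut_R(\Z)\times\Aut_R(A))$ and checking equivariance, the paper simply notes that $\Aut_R(\Z)=\{\pm 1\}$ (since $\Z$ carries the trivial $R$-action): given a chain homotopy equivalence $\varphi\colon E_1\circ F\to E_2\circ F$, one replaces $\varphi$ by $-\varphi$ if necessary to arrange $\varphi_\Z=\id$, whence $E_2\circ F\cong(\varphi_A)_*(E_1\circ F)=(\varphi_A)_*(E_1)\circ F$ is a genuine congruence and Lemma~\ref{lemma:yoneda} cancels $F$ immediately.

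Your equivariance step under $a\in\Aut_R(\Z)$ is correct but tacitly relies on exactly this fact: one has $(a^*E)\circ F\cong E\circ(a_*F)$ by middle associativity, and $a_*F\cong a^*F$ in $\Ext_R^k(\Z,\Z)$ precisely because pullback and pushout by $a\in\End_R(\Z)=\Z$ both act as multiplication by the integer $a$---``bilinearity'' alone does not give this. In particular, the concern in your final paragraph is misplaced: the paper's direct approach succeeds because it only uses the \emph{rightmost} copy of $\Z$, where the end automorphism is forced to be $\pm\id$, and never attempts to extract anything at the splice.
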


\begin{proof}
Firstly note that $- \circ F$ induces maps on the chain homotopy classes by extending the map to $\pm \id$ on $F$. This is necessarily surjective. To see that it is injective, suppose that there is a chain homotopy equivalence $\varphi: E_1 \circ F \to E_2 \circ F$. By considering $-\varphi$ if necessary, we can assume that $\varphi_{\Z} = \id$, so that 
\[ E_2 \circ F \cong (\varphi_A)_*(E_1 \circ F) = (\varphi_A)_*(E_1) \circ F.\]
By Lemma \ref{lemma:yoneda}, this implies that $E_2 \cong (\varphi_A)_*(E_1)$ and so $E_1 \simeq E_2$ as required.	
\end{proof}

The proof of the duality lemma in this setting is similar as so will be omitted. 

\begin{lemma}[Duality] \label{lemma:h-yoneda-duality}
If $A$ is an $R$-module, $F \in \Proj_{R}^{k}(\Z,\Z)$ and $k > n \ge 1$, then there are bijections
	\begin{align*} \Psi_F : & \,\, \hProj_{R}^{n}(\Z,A) \to \hProj_{R}^{k-n}(A,\Z),  &\Psi_F^{-1} : & \,\, \hProj_{R}^{k-n}(A,\Z) \to \hProj_{R}^{n}(\Z,A).\\
  & \,\,E \mapsto (- \circ E)^{-1}(F) && \,\,E' \mapsto (E' \circ -)^{-1}(F)\end{align*}
\end{lemma}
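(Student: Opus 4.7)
The plan is to follow the strategy of Lemma \ref{lemma:h-yoneda}, first upgrading the bijections of Lemma \ref{lemma:yoneda} to bijections on chain homotopy classes and then defining $\Psi_F$ and $\Psi_F^{-1}$ as pullbacks of $[F]$. Concretely, for any $E \in \Proj_R^n(\Z, A)$ and $E' \in \Proj_R^{k-n}(A, \Z)$, I would show that the Yoneda products
\[ - \circ E : \hProj_R^{k-n}(A, \Z) \to \hProj_R^k(\Z, \Z), \qquad E' \circ - : \hProj_R^n(\Z, A) \to \hProj_R^k(\Z, \Z) \]
are bijections. Granting this, $\Psi_F(E) := (- \circ E)^{-1}[F]$ and $\Psi_F^{-1}(E') := (E' \circ -)^{-1}[F]$ are well defined, and they are mutually inverse by a tautology: if $G \circ E \simeq F$ then $\Psi_F^{-1}(G) = [E]$, and symmetrically for the other composition.

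Well-definedness of $-\circ E$ on chain homotopy classes is routine (extend a chain homotopy equivalence on $G$ by the identity along $E$), and surjectivity is inherited from Lemma \ref{lemma:yoneda} since congruence refines chain homotopy equivalence. The main step is injectivity, handled exactly as in Lemma \ref{lemma:h-yoneda}. Suppose $\varphi : G_1 \circ E \to G_2 \circ E$ is a chain homotopy equivalence in $\Proj_R^k(\Z, \Z)$. Its two end-maps on $\Z$ lie in $\Aut_R(\Z) = \{\pm \id\}$. Replacing $\varphi$ by $-\varphi$ if necessary, we may assume that the domain-side end-map is $\id$. Then $G_2 \circ E$ is congruent to the pushout $(\varphi_\Z^{\mathrm{top}})_*(G_1 \circ E)$, and since the kernel end lies entirely inside the $G_i$ portion of the splice this equals $(\varphi_\Z^{\mathrm{top}})_*(G_1) \circ E$. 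Applying Lemma \ref{lemma:yoneda} to cancel the right factor yields $G_2 \cong (\varphi_\Z^{\mathrm{top}})_*(G_1)$ as congruences, and since $\varphi_\Z^{\mathrm{top}} = \pm \id$ this is chain-isomorphic to $G_1$, so $G_1 \simeq G_2$ in $\hProj_R^{k-n}(A, \Z)$.

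The main obstacle is this injectivity argument, which relies crucially on $\Aut_R(\Z) = \{\pm \id\}$---the same observation used in Lemma \ref{lemma:h-yoneda}. The symmetric injectivity for $E' \circ -$ is identical, and everything else is a formal consequence of Lemma \ref{lemma:yoneda}.
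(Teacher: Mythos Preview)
There is a genuine gap in your well-definedness step for $-\circ E$. Your proposed argument ``extend a chain homotopy equivalence on $G$ by the identity along $E$'' does not produce a chain map: a chain homotopy equivalence $\varphi:G_1\to G_2$ in $\Proj_R^{k-n}(A,\Z)$ has end-maps $\varphi_\Z\in\{\pm 1\}$ on $\Z$ and an \emph{arbitrary} $\varphi_A\in\Aut_R(A)$ on $A$, and the splice in $G_i\circ E$ occurs at $A$. For the extended map to commute at the splice you would need $\varphi_A=\id$, which is exactly what distinguishes this situation from Lemma~\ref{lemma:h-yoneda} (where the splice is at $\Z$). In fact $-\circ E$ is typically \emph{not} well-defined on $\hProj$: for $\alpha\in\Aut_R(A)$ one always has $G\simeq(\alpha)^*G$, yet $(\alpha)^*G\circ E\cong G\circ(\alpha)_*E$, and in $\hProj_R^k(\Z,\Z)$ this is equivalent to $G\circ E$ only when $(\alpha)_*E\cong \pm E$ in $\Ext_R^n(\Z,A)$, a condition that can certainly fail.

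The repair is to bypass $-\circ E$ entirely and argue directly for $\Psi_F$. If $E_1\simeq E_2$ via end-maps $(\alpha,\beta)$ with $\beta=\pm 1$, write $E_2\cong(\alpha)_*(m_\beta)^*E_1$. Letting $G_i$ be the unique congruence class with $G_i\circ E_i\cong F$, the interchange $G_2\circ(\alpha)_*E_1\cong(\alpha)^*G_2\circ E_1$ together with Lemma~\ref{lemma:yoneda} yields $(\alpha)^*G_2\cong(m_\beta)_*G_1$, hence $G_1\simeq G_2$ in $\hProj_R^{k-n}(A,\Z)$. The crucial point is that the automorphism $\alpha$ of $A$ is absorbed on the $G$-side, where $A$ is an end module, rather than inside $\hProj_R^k(\Z,\Z)$ where both ends are $\Z$. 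Injectivity of $\Psi_F$ and the claim that $\Psi_F^{-1}$ is inverse follow by the symmetric argument (your injectivity paragraph is essentially this, applied in the other variable). This is presumably the ``similar'' argument the paper has in mind.
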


We now specialise to the case where the underlying abelian group of $R$ is finitely generated and torsion-free, and where $R$ is a ring with involution, i.e. a ring with an anti-automorphism $r \mapsto \bar{r}$ such that $\bar{\bar{r}} = r$ for all $r \in R$.
For example, for a finite group $G$, the group ring $\Z G$ has underlying abelian group $\Z^{|G|}$ and involution $\sum_{i=1}^n n_i g_i \mapsto \sum_{i=1}^n n_i g_i^{-1}$ where $n_i \in \Z$, $g_i \in G$. 
Using this involution, any right $R$-module $A$ can be viewed as a left $R$-module under the action $r \cdot x = x \cdot \bar{r}$ for $r \in R$, $x \in A$.
If $A$ is a left $R$-module, then $A^* = \Hom_{R}(A,R)$ is a right $R$-module under the action $(\varphi \cdot r) (x) = \varphi(x)r$ for $\varphi \in A^*$, $r \in R$. We will view $A^*$ as a left $R$-module using the involution on $R$.

Note that $( \,\cdot\, )^*$ can be viewed as a functor of $R$-modules: if $f : A_1 \to A_2$ is a map of $R$-modules, we can define $f^* : A_2^* \to A_1^*$ by sending $\varphi \mapsto \varphi \circ f$.
For $E = (P_*,\partial_*) \in \Proj^n_{R}(A,B)$, define the \textit{dual extension} by
\[ E^* = ( 0 \to A^* \xrightarrow[]{\varepsilon^*} P_{0}^* \xrightarrow[]{\partial_{1}^*} P_{1}^* \xrightarrow[]{\partial_{2}^*} \cdots \xrightarrow[]{\partial_{n-2}^*} P_{n-2}^* \xrightarrow[]{\partial_{n-1}^*} P_{n-1}^* \xrightarrow[]{i^*} B^* \to 0).\]
The dual of a projective module is projective since $P \oplus Q \cong R^n$ implies that $P ^* \oplus Q^* \cong (R^n)^* \cong R^n$. In particular, the $P_i^*$ are projective $R$-modules.

Whilst $E^*$ is not exact in general, it is true under mild assumptions on the modules involved. We say that an $R$-module $A$ is an \textit{$R$-lattice} if its underlying abelian group is finitely generated and torsion-free.
For example, if $P$ is a (finitely generated) projective $R$-modules, then $P$ is an $R$-lattice. This follows from the fact that $P \le R^n$ is an $R$-submodule for some $n$ and so its underlying abelian group is a subgroup of $\Z^{m}$ where $m = n \cdot \rank_{\Z}(R)$.

Recall that the \textit{evaluation map} is the map $e_A : A \to A^{**}$, $x \mapsto (f \mapsto f(x))$. We say an $R$-module is \textit{reflexive} if $e_A$ is an $R$-module isomorphism.

\begin{lemma} \label{lemma:lattice**=lattice}
If $A$ is a $R$-lattice, then $A$ is reflexive.
\end{lemma}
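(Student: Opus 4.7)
Plan. The strategy is to reduce to the case of finitely generated projective modules via a presentation of $A$.

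First, for $A = R$ itself, the involution gives a canonical identification $R^* = \Hom_R(R,R) \cong R$ via $\varphi \mapsto \overline{\varphi(1)}$. Applying this twice yields $R^{**} \cong R$, and a direct check shows that this identifies $e_R$ with $\id_R$. By additivity, $e_{R^n}$ is an isomorphism for every $n \ge 0$, and hence $e_P$ is an isomorphism for every finitely generated projective $R$-module $P$ (as a direct summand of some $R^n$).

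For a general lattice $A$, choose a presentation $R^m \xrightarrow{f} R^n \xrightarrow{g} A \to 0$, which exists because $A$ is finitely generated. By naturality of $e$ together with the identifications $(R^i)^{**} \cong R^i$ from the previous step, the map $f^{**}$ is identified with $f$, giving a commutative ladder whose top row is $R^m \xrightarrow{f} R^n \xrightarrow{g} A \to 0$, whose bottom row is $R^m \xrightarrow{f} R^n \xrightarrow{g^{**}} A^{**}$, and whose vertical maps are $\id$, $\id$, and $e_A$. A standard diagram chase then shows that $e_A$ is an isomorphism provided the bottom row is right exact. For injectivity of $e_A$ on its own, I would use the lattice hypothesis directly: the inclusion $A \hookrightarrow A \otimes_{\Z} \Q$ is injective, and naturality of $e$ reduces injectivity of $e_A$ to that of $e_{A \otimes \Q}$ over $R \otimes \Q$, which is standard.

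The main obstacle I anticipate is establishing that the doubly-dualised sequence is right exact at $A^{**}$, since $\Hom_R(-,R)$ is only left exact in general. Concretely, this amounts to showing that every $R$-linear functional on $A^*$ extends to one on $R^n$. I would approach this by noting that $A^* \subseteq R^n$ is itself a lattice, then running the same construction for $A^*$ in place of $A$ and comparing, thereby exploiting the self-duality available over a ring with involution. If this purely categorical route fails, one can fall back on a direct $\Ext$-vanishing computation using the structure of $R$, which is the point where any ring-theoretic hypotheses hidden in the term ``lattice'' are likely to enter.
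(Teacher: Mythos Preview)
Your approach has a genuine gap, which you correctly identify but do not close: the right-exactness of the doubly-dualised presentation at $A^{**}$ is essentially the whole content of the lemma, and neither of your proposed workarounds (a self-duality trick applied to $A^*$, or an unspecified $\Ext$-vanishing computation) is fleshed out enough to carry weight. Your separate injectivity argument via $A \hookrightarrow A \otimes_\Z \Q$ also requires knowing that $R$-linear functionals on $A$ separate points, which is the same issue in disguise and is not automatic for an arbitrary ring with involution.

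The paper bypasses all of this with a direct, elementary computation. Since $A \cong_{\text{Ab}} \Z^k$, the $R$-module structure is encoded by a ring map $\rho_A : R \to M_k(\Z)$; one then uses the identification $A^* \cong_{\text{Ab}} \Z^k$ (valid for $R = \Z G$ with $G$ finite, via the isomorphism $\Hom_{\Z G}(A,\Z G) \cong \Hom_\Z(A,\Z)$) under which $\rho_{A^*}(r) = \rho_A(\bar r)^T$. Dualising twice gives $\rho_{A^{**}} = \rho_A$, and $e_A$ corresponds to the identity on $\Z^k$. This is exactly the ring-theoretic input you suspected was hidden: the identification $A^* \cong_{\text{Ab}} \Z^k$ fails for a general ring with involution (take $R = \Z[x]$ with trivial involution and $A = \Z$ with $x$ acting by $0$; then $A^* = 0$), so a purely categorical route via presentations cannot succeed without it.
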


\begin{remark}
Since projective $R$-modules are $R$-lattices, this implies that they are reflexive. We note that this is true for arbitrary rings $R$, not just rings with involution whose underlying abelian group is finitely generated and torsion free.
\end{remark}

This follows by noting that, if $A \cong_{\text{Ab}} \Z^k$, then the $R$-module structure is determined by a map $\rho_A: R \to M_k(\Z)$. It can be shown that $\rho_{A^*}(r) = \rho_A(\bar{r})^T$ using the induced identification $A^* \cong_{\text{Ab}} \Z^k$, from which the claim follows. 

It follows easily from this that the reflexivity property of $R$-lattices also holds on the level of extensions.

\begin{lemma}[Reflexivity] \label{lemma:double-dual}
If $A$, $B$ are $R$-lattices and $n \ge 1$, then dualising gives an isomorphism of abelian groups
\[ * :  \hProj^n_{R}(A,B) \to \hProj^n_{R}(B^*,A^*).\]
If $E \in \Proj^n_{R}(A,B)$, then there is a chain homotopy equivalence $e: E \to E^{**}$ induced by the evaluation maps.
\end{lemma}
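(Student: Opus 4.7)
My plan is to verify, in order, that (a) dualization sends $\Proj^n_R(A, B)$ into $\Proj^n_R(B^*, A^*)$, (b) it descends to $\hProj$ and is additive under Baer sum, and (c) it is its own inverse via the evaluation map.

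For (a), the content is that $E^*$ is exact. I would split $E$ into short exact sequences $0 \to K_{j+1} \to P_j \to K_j \to 0$ for $0 \le j \le n-1$, where $K_0 = A$, $K_n = B$, and $K_j = \ker(\partial_{j-1})$ for intermediate indices. Each $K_j$ is a $\Z$-submodule of a projective (hence $\Z$-free) module, so is itself an $R$-lattice. The key input is the vanishing $\Ext^1_R(L, R) = 0$ for every $R$-lattice $L$, which implies that each dualized sequence $0 \to K_j^* \to P_j^* \to K_{j+1}^* \to 0$ remains short exact; splicing them yields the exactness of $E^*$. The projectivity of the $P_j^*$ is immediate from that of $P_j$.

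For (b), a chain homotopy equivalence $\varphi: E \to E'$ has endpoint components $\varphi_A, \varphi_B$ which are isomorphisms of lattices; by Lemma \ref{lemma:lattice**=lattice}, $\varphi_A^*$ and $\varphi_B^*$ are also isomorphisms, so $\varphi^*: E'^* \to E^*$ is a chain homotopy equivalence and $*$ descends to $\hProj$. Additivity follows since $\Hom_R(-, R)$ (with direction reversed) preserves pullbacks and pushouts, and the Baer sum is built from these.

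For (c), the evaluation maps $e_A$, $e_B$, $e_{P_i}$ assemble into a chain map $e: E \to E^{**}$, every component of which is an isomorphism by Lemma \ref{lemma:lattice**=lattice}. So $e$ is a chain homotopy equivalence, $**$ induces the identity on $\hProj^n_R(A, B)$, and $*$ is therefore a bijection with itself as inverse under the canonical identifications $A \cong A^{**}$ and $B \cong B^{**}$.

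The main obstacle is (a): exactness of $E^*$ is the one step that genuinely uses the lattice hypothesis, and over a general ring it can fail (take, for instance, $R = \Z[t]$ and $L = \Z[t]/(t)$, for which $L^* = 0$). In the group-ring setting implicit in this paper, the required vanishing $\Ext^1_R(L, R) = 0$ for lattices follows from Eckmann--Shapiro applied to the self-dual identification $\Z G \cong \Hom_\Z(\Z G, \Z)$ of left $\Z G$-modules, which reduces the computation to $\Ext^1_\Z(L, \Z) = 0$ for $\Z$-free $L$.
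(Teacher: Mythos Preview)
Your proposal is correct and considerably more detailed than the paper, which does not give an explicit proof but simply asserts that the lemma ``follows easily'' from the module-level reflexivity statement (Lemma~\ref{lemma:lattice**=lattice}). The paper's hint for that earlier lemma---that $A^* \cong_{\text{Ab}} \Z^k$ with $\rho_{A^*}(r) = \rho_A(\bar r)^T$---amounts to the natural identification $\Hom_R(A,R) \cong \Hom_\Z(A,\Z)$ of abelian groups, from which exactness of $E^*$ follows directly since $\Z$-dualising an exact sequence of finitely generated free abelian groups preserves exactness; your route via $\Ext^1_R(L,R)=0$ and Eckmann--Shapiro is the same fact in cohomological language, and you are right to flag that this step genuinely requires the implicit group-ring hypothesis rather than holding for an arbitrary ring with involution.
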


This has the following useful consequence which, in the language of \cite[Theorem 28.5]{Jo03a}, says that projective $R$-modules are \textit{injective relative to the class of $R$-lattices}.

\begin{lemma} \label{lemma:relative-inj}
Suppose $A$, $B$ and $E$ are $R$-lattices such that $(E,-) \in \Ext^1_{R}(A,B)$ and $P$ is a projective $R$-module. Then, for any map $f: B \to P$, there exists $\widetilde{f} : E \to P$ such that $\widetilde{f} \circ i = f$, i.e.
\[
\begin{tikzcd}
0 \ar[r] & B \ar[d,"f"] \ar[r,"i"] & E \ar[r,"\varepsilon"] \ar[dl,dashed,"\widetilde{f}"] & A \ar[r] & 0\\
& P & & &	
\end{tikzcd}
\]
\end{lemma}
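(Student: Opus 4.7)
The plan is to reduce to the case where $E$ itself is projective via a pushout argument, and then to exploit the reflexivity isomorphism of Lemma \ref{lemma:double-dual} together with the projectivity of $P^*$.

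For the reduction, I pick any surjection $\pi : P_0 \twoheadrightarrow A$ from a projective $R$-module $P_0$ and set $K = \ker(\pi)$, which is a lattice. Projectivity of $P_0$ lifts $\pi$ along $\varepsilon : E \to A$ to some $h : P_0 \to E$, and since $h(K) \subseteq \image(i) \cong B$, the restriction $h|_K$ factors as $i \circ g$ for a unique $g : K \to B$. A short diagram chase identifies $E$ with the pushout of $B \xleftarrow{g} K \xhookrightarrow{j} P_0$, where $j$ denotes the inclusion. The universal property then says that extending $f : B \to P$ to some $\widetilde{f} : E \to P$ is equivalent to producing $\phi : P_0 \to P$ with $\phi \circ j = f \circ g$. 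I may therefore replace $(E, B, f)$ by $(P_0, K, f \circ g)$ and assume $E = P_0$ is projective.

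In this reduced setting, write $f' := f \circ g : K \to P$. Lemma \ref{lemma:double-dual} applied in degree $n = 1$ produces an exact sequence $0 \to A^* \to P_0^* \to K^* \to 0$ in $\Proj_R^1(K^*, A^*)$, so in particular $j^* : P_0^* \twoheadrightarrow K^*$ is surjective. Because $P$ is projective so is $P^*$, and hence the dual $f'^* : P^* \to K^*$ lifts through $j^*$ to some $\psi : P^* \to P_0^*$ with $j^* \circ \psi = f'^*$. Using the evaluation isomorphisms $e_{P_0}$ and $e_P$ from Lemma \ref{lemma:lattice**=lattice}, I set $\phi := e_P^{-1} \circ \psi^* \circ e_{P_0} : P_0 \to P$, and a brief computation using naturality of $e$ reduces the desired identity $\phi \circ j = f'$ to $j^* \circ \psi = f'^*$, which holds by construction. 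I expect the main obstacle to be the exactness of the dualised projective extension (i.e.\ the surjectivity of $j^*$), which is precisely the content of Lemma \ref{lemma:double-dual} and so is imported as a black box.
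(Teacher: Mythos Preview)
Your proof is correct. The paper does not actually provide an argument for this lemma---it merely asserts that the statement is a consequence of the reflexivity lemma (Lemma~\ref{lemma:double-dual}) and cites \cite[Theorem 28.5]{Jo03a} for the terminology---so your write-up supplies the missing details in a manner consistent with that hint.

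Your reduction step to the case where the middle term is projective is worth highlighting as genuinely necessary given what is available: Lemma~\ref{lemma:double-dual} is stated only for $\Proj^n_R(A,B)$, so one cannot directly conclude that $i^* : E^* \to B^*$ is surjective for the original (non-projective) extension, and attempting to do so is essentially circular since that surjectivity is exactly the case $P = R$ of the lemma itself. The pushout trick resolves this cleanly by producing a projective extension to which Lemma~\ref{lemma:double-dual} applies verbatim. One minor caveat: the finite generation of $K = \ker(\pi)$ tacitly assumes $R$ is left Noetherian, which is not part of the standing hypotheses in Section~\ref{section:ext} but holds in the intended application $R = \Z G$ for $G$ finite.
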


We conclude this section by discussing an important invariant of projective extensions.
Let $P(R)$ denote the $R$-module isomorphism classes of (finitely generated) projective $R$-modules and define the \textit{projective class group} $C(R)$ as the quotient of $P(R)$ by the stable isomorphisms, where $P, Q \in P(R)$ are \textit{stably isomorphic}, written $[P] = [Q]$, if $P \oplus R^i \cong Q \oplus R^j$ for some $i, j \ge 0$. This forms a group under direct sum and coincides with the Grothendieck group of the monoid $P(R)$.

For a projective extension
\[ E = ( 0 \to B \xrightarrow[]{i} P_{n-1} \xrightarrow[]{\partial_{n-1}} P_{n-2} \xrightarrow[]{\partial_{n-2}} \cdots \xrightarrow[]{\partial_{2}} P_1 \xrightarrow[]{\partial_{1}} P_0 \xrightarrow[]{\varepsilon} A \to 0),\]
we define the \textit{Euler class} $e(E) = \sum_{i=0}^{n-1} (-1)^i [P_i] \in C(R)$. This is known to be a congruence invariant \cite[Lemma 1.3]{Wa79a}. In fact, more is true:

\begin{lemma}
If $A, B$ are $R$-modules, the Euler class defines a map
\[ e : \hProj_{R}^n(A,B) \to C(R),\]
i.e. $e$ is a chain homotopy invariant.
\end{lemma}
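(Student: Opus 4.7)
The plan is to reduce the hypothesis of chain homotopy equivalence to an elementary congruence plus a trivial relabelling of boundary maps, and then invoke the congruence invariance of the Euler class from \cite[Lemma 1.3]{Wa79a}. The key observation is that $e(E) = \sum_{i=0}^{n-1}(-1)^i[P_i]$ depends only on the isomorphism classes of the middle projective modules, and not on the maps $\partial_j$, $i$, or $\varepsilon$.

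Concretely, given a chain homotopy equivalence $\varphi = (\varphi_B, \varphi_{n-1}, \ldots, \varphi_0, \varphi_A): E \to E'$ with $\varphi_A, \varphi_B$ isomorphisms, I would construct a third extension $E'' \in \Proj_R^n(A,B)$ whose underlying sequence of projective modules and internal differentials coincide with those of $E'$, but with modified endpoint maps $i'' := i' \circ \varphi_B$ and $\varepsilon'' := \varphi_A^{-1} \circ \varepsilon'$. Exactness of $E''$ is immediate since $\varphi_A, \varphi_B$ are isomorphisms, and by construction $e(E'') = e(E')$ term-by-term.

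The tuple $\varphi'' := (\id_B, \varphi_{n-1}, \ldots, \varphi_0, \id_A)$ should then define a chain map $E \to E''$: the right-hand square commutes because $\varepsilon'' \circ \varphi_0 = \varphi_A^{-1} \circ \varepsilon' \circ \varphi_0 = \varphi_A^{-1} \circ \varphi_A \circ \varepsilon = \varepsilon$, the left-hand square because $\varphi_{n-1} \circ i = i' \circ \varphi_B = i''$ (both identities coming from the chain map property of $\varphi$), and the interior squares are exactly those of $\varphi$. Since $\varphi''$ is then an elementary congruence, \cite[Lemma 1.3]{Wa79a} yields $e(E) = e(E'')$, and combined with the previous step we obtain $e(E) = e(E')$. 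The argument is essentially bookkeeping, and the only point where the full strength of chain homotopy equivalence is really used (rather than just the existence of any chain map $E \to E'$) is in inverting $\varphi_A$ to define $\varepsilon''$; so no step looks delicate once the cited congruence invariance of $e$ is in hand.
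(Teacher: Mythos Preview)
Your proof is correct and is essentially the same approach as the paper's: both absorb the end isomorphisms $\varphi_A,\varphi_B$ into the boundary maps of one of the two extensions, thereby reducing the chain homotopy equivalence to an elementary congruence and invoking \cite[Lemma~1.3]{Wa79a}. The paper phrases this in the language of pushout and pullback along automorphisms (writing $E_2 \cong (\varphi_A)^*((\varphi_B)_*(E_1))$ and noting that these operations can be realised without changing the underlying modules), whereas you construct the intermediate extension $E''$ by hand; the content is the same.
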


\begin{proof}
Suppose $E_1, E_2 \in \Proj_{R}^n(A,B)$ and that $\varphi: E_1 \to E_2$ is a chain homotopy equivalence. Then $E_2 \cong (\varphi_A)^*((\varphi_B)_*(E_1))$ and, since $e$ is a congruence invariant, we have that $e(E_2) = e((\varphi_A)^*((\varphi_B)_*(E_1)))$. Since pushout and pullback by automorphisms can be made to not affect the isomorphism classes of the modules in the extension, this implies that $e((\varphi_A)^*((\varphi_B)_*(E_1))) = e(E_1)$ and so $e$ is a chain homotopy invariant.	
\end{proof}

The following tells us how the Euler class interacts with with the Yoneda product.

\begin{lemma} \label{lemma:e-of-prod}
Let $A$, $B$ and $C$ be $R$-modules. If $E \in \hProj_R^n(A,B)$ and $F \in \hProj_R^m(B,C)$, then
\[ e(F \circ E) = e(E) + (-1)^n e(F). \]
\end{lemma}

\begin{proof}
	Let $E = (P_*, \partial_*)_{*=0}^{n-1}$ and let $F = (P_{*+n},\partial_{*+n})_{*=0}^{m-1}$. Then $F \circ E = (P_*,\partial_*)_{*=0}^{n+m-1}$ and
	\[ e(F \circ E) = \sum_{i=0}^{n+m-1} (-1)^i[P_i] = \sum_{i=0}^{n-1} (-1)^i[P_i] + \sum_{i=0}^{m-1} (-1)^{i+n}[P_{i+n}] = e(E) + (-1)^n e(F). \qedhere \]
\end{proof}

For a class $\chi \in C(R)$, we define $\Proj_{R}^{n}(A,B;\chi)$ to be the subset of $\Proj_{R}^{n}(A,B)$ consisting of those extensions with $e(E) = \chi$, and we can define $\hProj_{R}^n(A,B;\chi)$ similarly as a subset of $\hProj_{R}^n(A,B)$.

We have the following nice interpretations for the extensions $E \in \Proj^n_{R}(A,B)$ with $e(E) =0$. This follows easily by repeatedly forming the direct sum with length two extensions $P \xrightarrow[]{\cong} P$ for various $P \in P(R)$.

\begin{lemma} \label{lemma:free-res}
If $A, B$ are $R$-modules and $n \ge 2$, then every congruence class in $\Proj_{R}^n(A,B;0)$ has a representative $E$ of the form $E = (F_*, \partial_*)$ with the $F_i$ free.
\end{lemma}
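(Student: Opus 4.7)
The approach is the one suggested by the hint: summing $E = (P_*, \partial_*) \in \Proj_R^n(A,B;0)$ with a length-two ``bubble'' extension
\[ 0 \to \cdots \to 0 \to Q \xrightarrow{\id} Q \to 0 \to \cdots \to 0 \]
(with the two copies of $Q$ placed at positions $i$ and $i{-}1$ for some $1 \le i \le n-1$) yields an extension congruent to $E$. Indeed, the projection from the direct sum onto $E$ is a chain map that is the identity on $A$ and on $B$, hence is an elementary congruence. We are therefore free to augment $E$ by any finite family of such bubbles, one at each edge of the chain, without changing its congruence class in $\Proj_R^n(A,B)$.

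Insert a projective module $Q^{(i)}$ at each edge $(i,i-1)$ for $i = 1, \ldots, n-1$. The resulting extension has modules
\[ F_{n-1} = P_{n-1} \oplus Q^{(n-1)}, \quad F_i = P_i \oplus Q^{(i+1)} \oplus Q^{(i)} \ (1 \le i \le n-2), \quad F_0 = P_0 \oplus Q^{(1)}. \]
The plan is to choose $Q^{(n-1)}, Q^{(n-2)}, \ldots, Q^{(1)}$ inductively from the top. First pick $Q^{(n-1)}$ with $[Q^{(n-1)}] = -[P_{n-1}] \in C(R)$ and enough extra free summands that $F_{n-1}$ is actually free --- possible because any stably free module becomes free after a direct sum with $R^m$ for $m$ large enough. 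Having chosen $Q^{(n-1)}, \ldots, Q^{(i+1)}$ so that $F_{n-1}, \ldots, F_{i+1}$ are all free, pick $Q^{(i)}$ with stable class $-[P_i] - [Q^{(i+1)}] = \sum_{j=i}^{n-1}(-1)^{j-i+1}[P_j]$ --- the unique class forcing $[F_i] = 0$ --- again large enough that $F_i$ is genuinely free.

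At the final stage, $F_0 = P_0 \oplus Q^{(1)}$ has stable class
\[ [P_0] + [Q^{(1)}] = \sum_{j=0}^{n-1}(-1)^j[P_j] = e(E), \]
which vanishes by hypothesis, so $F_0$ is stably free. We convert this to genuine freeness by choosing $m$ with $F_0 \oplus R^m$ free and replacing the final choice $Q^{(1)}$ by $Q^{(1)} \oplus R^m$; this enlarges $F_1$ to $F_1 \oplus R^m$, which is still free, and enlarges $F_0$ to the free module $F_0 \oplus R^m$. The resulting extension is congruent to $E$ with every module free. The only delicate point --- and the only one that uses $e(E) = 0$ --- is this last passage from stably free to free at the bottom of the chain; everything else is straightforward bookkeeping built around the observation that bubble insertions do not change the congruence class.
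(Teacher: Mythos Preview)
Your proposal is correct and follows exactly the approach the paper indicates: the paper itself gives only the one-line hint ``This follows easily by repeatedly forming the direct sum with length two extensions $P \xrightarrow{\cong} P$ for various $P \in P(R)$,'' and you have carried this out carefully, including the final observation that the Euler-class hypothesis $e(E)=0$ is what makes the bottom module $F_0$ stably free. One small simplification: at each inductive step you can take $Q^{(i)}$ to be an honest direct-sum complement of $P_i \oplus Q^{(i+1)}$ inside a free module, which gives genuine (not merely stable) freeness of $F_i$ immediately and makes the ``enough extra free summands'' clause unnecessary except at the very last step.
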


This fails in the case $n=1$, where it is not possible to form the direct sum with length two extensions $R \xrightarrow[]{\cong} R$ without altering the chain homotopy type. In fact, for a projective extension
\[ E = (0 \to B \to P \to A \to 0 ),\]
we can define the \textit{unstable Euler class} $\wh e(E) = P \in P(R)$.

\begin{lemma} \label{lemma:unstable-euler}
If $A, B$ are $R$-modules, the unstable Euler class defines a map
\[ \wh e :  \hProj_{R}^1(A,B) \to P(R).\]
\end{lemma}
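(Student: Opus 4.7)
The claim is that the unstable Euler class $\wh e(E) = P$ depends only on the chain homotopy class of $E \in \Proj^1_R(A,B)$, so the plan is to unpack what ``chain homotopy equivalence'' means in the degenerate situation $n=1$ and observe that it forces an isomorphism of middle terms. Fix two length-$1$ projective extensions
\[ E_1 = (0 \to B \to P \to A \to 0), \qquad E_2 = (0 \to B \to P' \to A \to 0)\]
together with a chain homotopy equivalence $\varphi = (\varphi_B, \varphi_0, \varphi_A) : E_1 \to E_2$, where by the definition recalled in the excerpt $\varphi_A, \varphi_B$ are $R$-module isomorphisms and the restriction $\varphi_0 : P \to P'$ is a chain homotopy equivalence between the un-augmented chain complexes concentrated in degree $0$.

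The crux is then to observe that a chain homotopy equivalence between two projective chain complexes concentrated in a single degree is simply an isomorphism. I would give two equivalent justifications: first, a chain homotopy between maps $P \to P'$ would have to be a degree $+1$ map, but there are no modules in degree $1$ on either side, so chain homotopy is trivial and $\varphi_0$ must admit a two-sided inverse in the literal sense; second, appealing to the criterion quoted from \cite[Theorem 46.6]{Jo03a} that a chain map between projective chain complexes is a chain homotopy equivalence if and only if it is a homology equivalence, and noting that the homology of the un-augmented complex $P$ in degree $0$ is $P$ itself, so $\varphi_0$ must be an isomorphism.

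Once $\varphi_0 : P \to P'$ is known to be an isomorphism, we immediately conclude
\[ \wh e(E_1) = P = P' = \wh e(E_2) \in P(R), \]
so $\wh e$ descends to a well-defined map $\hProj^1_R(A,B) \to P(R)$. Since congruence of extensions is stronger than chain homotopy equivalence, no separate congruence-invariance argument is required.

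I do not expect a real obstacle here: the content of the lemma is essentially the observation that stabilisation (which is what destroys unstable information in higher degrees via Lemma \ref{lemma:free-res}) is not available in length $1$, so the $R$-isomorphism type of the middle module is preserved by any chain homotopy equivalence. The only thing to be mildly careful about is the degenerate interpretation of chain homotopy in a one-term complex, which is handled by either of the two routes above.
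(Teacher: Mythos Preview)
Your proposal is correct and follows essentially the same approach as the paper: both reduce to the observation that a chain homotopy equivalence between projective chain complexes concentrated in a single degree is just an $R$-module isomorphism, whence $P \cong P'$. The paper's proof is a one-liner to this effect, whereas you spell out two justifications (trivial homotopies in degree $+1$, or the homology-equivalence criterion), but the content is identical.
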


\begin{proof}
For $E_1=(P_1,-), E_2 = (P_2,-) \in \Proj_{R}^1(A,B)$, recall that a chain map $\varphi: E_1 \to E_2$ is a chain homotopy equivalence if it induces a chain homotopy equivalence between the length one chain complexes $P_1$ and $P_2$, i.e. if the restriction $\varphi \mid_{P_1} : P_1 \to P_2$ is an isomorphism.
\end{proof}

%%%%%%%%%%

\section{Projective $\Z G$-modules and the Swan finiteness obstruction}
\label{section:swan-modules-and-projective-extensions}

Throughout this section, we will let $G$ be a finite group. The results of the previous section apply in the case $R=\Z G$ since $\Z G$ is a ring with involution which is finitely generated and torsion-free as an abelian group.
The aim of this section will be to recall some of the special features of projective modules over $\Z G$ and to introduce the Swan finiteness obstruction.

\subsection{Preliminaries on projective $\Z G$-modules} \label{ss:ZG-mod}

We will now summarise the main special properties of (finitely generated) projective $\Z G$-modules in the case where $G$ is finite. 

The first was shown by Swan in \cite[Theorem A]{Sw60b}.

\begin{prop} \label{prop:swan-induced}
Let $P$ be a projective $\Z G$-module. Then there is a projective ideal $I \subseteq \Z G$ such that $P \cong I \oplus \Z G^r$ for some $r \ge 0$.
\end{prop}

For a prime $p$, let $\Z_p$ denote the $p$-adic integers and let $\Z_{(p)} = \{ \frac{a}{b} : a,b \in \Z, p \nmid b\} \le \Q$ denotes the localisation at $p$. 
The next property that projective modules over $\Z G$ have is that they are locally free in the following sense (see \cite[Section 2]{Sw80} for further discussion).

\begin{prop} \label{prop:LF}
Let $P$ be a projective $\Z G$-module. There exists $n \ge 0$ such that
\begin{enumerate}[\normalfont (i)]
	\item $P \otimes \Z_{(p)} \cong \Z_{(p)} G^n$ are isomorphic as $\Z_{(p)} G$-modules
	\item $P \otimes \Q \cong \Q G^n$ are isomorphic as $\Q G$-modules
	\item $P \otimes \Z_p \cong \Z_p G^n$ are isomorphic as $\Z_p G$-modules.
	\item $P \otimes \Q_p \cong \Q_p G^n$ are isomorphic as $\Q_p G$-modules
\end{enumerate}
\end{prop}

\begin{proof}
(ii) and (iv) each follow from \cite[Theorem 4.2]{Sw70}. Given this, (i) and (iii) now follow from \cite[Theorem 2.21]{Sw70}.
\end{proof}

We define the \textit{rank} of $P$, denoted by $\rank(P)$, to be the $n \ge 0$ in the proposition above. 
For example, if $I \subseteq \Z G$ is a non-zero projective ideal, then it can be shown that $\rank(I) = 1$ (see \cite[Section 7]{Sw60b}).

Let $P(\Z G)$ denote the set of $\Z G$-module isomorphism classes of non-zero projective $\Z G$-modules. This is a monoid under direct sum.
Since $\rank(P \oplus Q) =\rank(P)+\rank(Q)$ for all $P,Q \in P(\Z G)$, we have that there is a surjective homomorphism of monoids
\[ \rank : P(\Z G) \to \Z, \quad P \mapsto \rank(P).\]

Note that $\rank(P) = 0$ if and only if $P=0$. That is, if $P$ is a non-zero projective $\Z G$-module, then $\rank(P) \ge 1$. This has the following consequence.

\begin{corollary} \label{cor:proj-surj}
Let $P$ be a non-zero projective $\Z G$-module. Then there exists a surjection $\varphi: P \to \Z$.
\end{corollary}

\begin{proof}
	Let $n = \rank(P) \ge 1$ and consider the composition
	\[ P \xhookrightarrow[]{x \mapsto x \otimes 1} P \otimes \Q \xrightarrow[]{\cong} \Q G^n \xrightarrowdbl[]{\pi_1} \Q G \xrightarrowdbl[]{\varepsilon} \Q \]
	where $\pi_1$ is projection onto the first coordinate and $\varepsilon$ is the augmentation map. Since $P$ is finitely generated, the image of the composition is a finitely generated subgroup of $\Q$ and so is isomorphic to $\Z$. This gives the required surjection.
\end{proof}

\subsection{Swan modules} \label{ss:swan-modules}

We will now define Swan modules which are a special type of projective module first introduced in \cite[Section 6]{Sw60a}. 
Let $\varepsilon: \Z G \to \Z$ denote the augmentation map and let $I = \Ker(\varepsilon) \subseteq \Z G$ denote the the augmentation ideal.
For any $r \in \Z$ coprime to $|G|$, the ideal $(I,r) \subseteq \Z G$ is projective and depends only on $r$ mod $|G|$ up to $\Z G$-isomorphism \cite{Sw60a}. Since $(I,r)$ is a non-zero ideal, it has rank one as a projective $\Z G$-module by the remarks in Section \ref{ss:ZG-mod}.

The modules $(I,r)$ are known as \textit{Swan modules} and the map
\[ S: (\Z / |G|)^\times \to C(\Z G)\]
given by $r \mapsto [(I,r)]$ is known as the \textit{Swan map}. This is a well-defined group homomorphism \cite{Sw60a}, and we define the \textit{Swan subgroup} to be $T_G = \IM(S) \le C(\Z G)$.

Whilst we will not make explicit use of it in this article, we will briefly mention the closely related ideal $(N,r) \subseteq \Z G$ where $N = \sum_{g \in G} g$ denotes the group norm. 
Many authors take the $(N,r)$ to be Swan modules instead of the ideals $(I,r)$. In fact, the two notions are equivalent, as the following proposition shows.

\begin{prop} \label{lemma:swan=swan*}
If $G$ is a finite group and $r \in (\Z/|G|)^\times$, then $(I,r) \cong (N, r^{-1})$.	
\end{prop}

This is presumably well known, but we will include a detailed proof here since we are not aware that one is currently available in the literature.

\begin{proof}
By the uniqueness of pullbacks, it will suffice to prove that both $(I,r)$ and $(N, r^{-1})$ arise as pullbacks of the map $r: \Z \to \Z/|G|$ which sends $1 \mapsto r$, and the map $\varepsilon: \Z G/(N) \to \Z /|G|$ which sends $x + (N) \mapsto \varepsilon(x) + |G|$.

First let $i : I \hookrightarrow (I,r)$ denote inclusion, let $\varphi: (I,r) \to \Z G/ (N)$ and let $q: \Z G \twoheadrightarrow \Z G/(N)$ denote the quotient map. Then there is a diagram
\[
\begin{tikzcd}[row sep=.5cm]
	0 \ar[r] & I \ar[r,"i"] \ar[d,"\id"] & (I,r) \ar[r,"\frac{1}{r}\varepsilon"] \ar[d,"q"] & \Z \ar[r] \ar[d,"r"] & 0 \\
	0 \ar[r] & I \ar[r,"j"] & \Z G/(N) \ar[r,"\varepsilon"]& \Z/|G| \ar[r] & 0
\end{tikzcd}
\]
where $q$ and $\frac{1}{r}\varepsilon$ denote the restrictions of these maps to $(I,r) \subseteq \Z G$ and $j = q \circ i$. It can be checked that the diagram commutes and that the rows are exact, and so the right hand square is a pullback.

Now let $s \in \Z$ be such that $s=r^{-1} \in (\Z /|G|)^\times$, so that $(N, r^{-1}) \cong (N, s)$. Define $f: (N,s) \to \Z G/(N)$ by sending $N x + s y \mapsto y$. Then consider the diagram
\[
\begin{tikzcd}[row sep=.5cm]
	0 \ar[r] & I \ar[r,"s"] \ar[d,"\id"] & (N,s) \ar[r,"\varepsilon"] \ar[d,"f"] & \Z \ar[r] \ar[d,"r"] & 0 \\
	0 \ar[r] & I \ar[r,"j"] & \Z G/(N) \ar[r,"\varepsilon"]& \Z/|G| \ar[r] & 0.
\end{tikzcd}
\]
Similarly, it can be checked that this commutes and that the rows are exact.
\end{proof}

\subsection{Projective extensions} \label{ss:proj-ext}

We will now consider the classification of extensions $\Proj^n_{\Z G}(\Z,A)$ for a fixed $\Z G$-module $A$. 
The following can be found in \cite[Proposition 34.2]{Jo03a} and shows that any two elements of $\Proj^n_{\Z G}(\Z,A)$ are related by pullbacks. Note that this isomorphism depends on the choice of $E$ and so only exists when $\Proj^n_{\Z G}(\Z,A)$ is non-empty.

\begin{prop} \label{prop:congruence-classification}
Let $A$ be a $\Z G$-module and $n \ge 1$. Then, for any $E \in \Proj^n_{\Z G}(\Z,A)$, there is a bijection
\[ (m_{\cdot})^*: (\Z /|G|)^\times \to \Proj^n_{\Z G}(\Z,A)\]
given by $r \mapsto (m_r)^*(E)$, where $m_r: \Z \to \Z$ denotes multiplication by $r$.
\end{prop}

\begin{remark}
This corresponds to the fact that extensions with fixed ends are determined by their $k$-invariants (see, for example, \cite[Chapter 6]{Jo03a}).
\end{remark}

Let $e$ denote the stable Euler class as defined in Section \ref{section:ext}. The next result computes the image of projective extensions under the stable Euler class.

\begin{prop} \label{prop:image-of-chi}
Let $e$ denote the stable Euler class. Let $n \ge 1$ and let $A$ be a $\Z G$-module such that there exists $E \in \Proj^n_{\Z G}(\Z,A)$.
If $e(E)=[P]$, then
\[ e( \Proj^n_{\Z G}(\Z,A)) = [P] + T_G \subseteq C(\Z G).\]
\end{prop}

\begin{proof}
This was proven in \cite[Lemmas 7.3 \& 7.4]{Sw60a} in the case $A=\Z$ and the proof for arbitrary $A$ is analogous. We will outline the steps here for the convenience of the reader.
 
The first step is to show that, for any $E, E' \in \Proj_{\Z G}^n(\Z, A)$, we have $e(E')-e(E) \in T_G$. By applying Schanuel's lemma (see \cite[Proposition 1.1]{Sw60a}) to the duals $E^*, (E')^* \in \Proj_{\Z G}^n(A^*,\Z)$, we get an isomorphism $\Z \oplus e(E^*) \cong \Z \oplus e((E')^*)$ and so $e((E')^*)-e(E^*) \in T_G$ by \cite[Lemma 6.2]{Sw60a}. Since $e(E^*) = e(E)^*$ and projective $\Z G$-modules are reflexive, dualising gives that $e(E')-e(E) \in T_G$.

	The second step is to show that, given $E \in \Proj_{\Z G}^n(\Z, A)$, there exists $E' \in \Proj_{\Z G}^n(\Z, A)$ such that $e(E')-e(E) = [(I,r)]$. This can be constructed in the same way as in \cite[Lemma 7.4]{Sw60a}. That is, using \cite[Remark 2.1]{Sw60a}.
\end{proof}

\subsection{The Swan finiteness obstruction} \label{ss:SFO}

We will now specialise further to the case $A = \Z$.
Recall that a finite group $G$ is said to have \textit{$k$-periodic cohomology} if there is an isomorphism of abelian groups $\wh H^i(G;\Z) \cong \wh H^{i+k}(G;\Z)$ for all $i \in \Z$.

\begin{remark}
Many authors define finite groups with periodic cohomology by the a priori stronger condition that there exists a class $u \in \wh H^k(G;\Z)$ such that cup product induces an isomorphism 
\[ u \cup - : \wh H^i(G;\Z) \to \wh H^{i+k}(G;\Z)\]
 for all $i \in \Z$. These definitions are equivalent since, if $\wh H^i(G;\Z) \cong \wh H^{i+k}(G;\Z)$ for all $i \in \Z$, then $\wh H^k(G;\Z) \cong \wh H^0(G;\Z) \cong \Z/|G|$ which implies that the conditions above holds by \cite[VI.9.1]{Br82}.
\end{remark}

The following can be extracted from \cite[Chapter XII]{CE56}.

\begin{prop}
Let $G$ be a finite group. Then $G$ has $k$-periodic cohomology if and only if $\Proj_{\Z G}^k(\Z,\Z)$ is non-empty.
\end{prop}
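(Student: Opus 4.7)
The plan is to handle the two directions separately, relying on the shifting lemma for the easy direction and on the classical Swan/Cartan--Eilenberg construction of periodic resolutions for the hard one.

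For the direction $(\Leftarrow)$, suppose $E = (0 \to \Z \to P_{k-1} \to \cdots \to P_0 \to \Z \to 0) \in \Proj^k_{\Z G}(\Z,\Z)$. Applying the shifting lemma (Lemma \ref{lemma:yoneda}) with $F = E$, Yoneda product induces a bijection
\[ - \circ E : \Ext^n_{\Z G}(\Z, M) \to \Ext^{n+k}_{\Z G}(\Z, M) \]
for every $n \ge 1$ and every $\Z G$-module $M$. Specialising to $M = \Z$ gives $H^n(G;\Z) \cong H^{n+k}(G;\Z)$ for all $n \ge 1$, and since the existence of a finite-length projective extension starting and ending in $\Z$ forces $G$ to be finite (and so $H^n = \widehat H^n$ for $n \ge 1$), the standard Tate duality for finite groups extends the isomorphism to all $n \in \Z$.

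For the direction $(\Rightarrow)$, assume $G$ has $k$-periodic cohomology. My preferred route is to invoke the theorem of Cartan--Eilenberg (Chapter XII, as suggested in the excerpt): a finite group with $k$-periodic cohomology admits a $k$-periodic projective resolution of $\Z$ over $\Z G$, i.e.\ a resolution $\cdots \to P_1 \to P_0 \to \Z \to 0$ with $P_{i+k} \cong P_i$ and periodic boundary maps. In such a resolution the $k$-th syzygy $\ker(P_{k-1} \to P_{k-2})$ is literally $\Z$, so truncating produces the desired element $(0 \to \Z \to P_{k-1} \to \cdots \to P_0 \to \Z \to 0) \in \Proj^k_{\Z G}(\Z,\Z)$.

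If one prefers a more self-contained argument, one can start from an arbitrary finitely generated projective resolution, form the partial extension $E_0 = (0 \to \Omega^k \to P_{k-1} \to \cdots \to P_0 \to \Z \to 0)$, use periodicity to deduce $\Omega^k \oplus Q \cong \Z \oplus P$ for some finitely generated projectives $P, Q$, stabilise $E_0$ by $Q \xrightarrow{\id} Q$ to replace the initial term by $\Z \oplus P$, and push out along the projection $\Z \oplus P \to \Z$. The principal obstacle is that this naive pushout produces a middle module $M$ fitting into a short exact sequence $0 \to P \to P_{k-1} \oplus Q \to M \to 0$, and since $\Z G$-projectives are not in general injective this need not split; one has to choose the stable isomorphism so that $\phi(0 \oplus P)$ is a direct summand of $P_{k-1} \oplus Q$. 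This is precisely the technical point that Swan's periodic resolution theorem circumvents, which is why I would take the Cartan--Eilenberg route as the primary argument and only sketch the pushout construction as a supplementary view.
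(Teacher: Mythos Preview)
The paper does not give a proof of this proposition; it merely states that ``the following can be extracted from [CE56, Chapter XII]''. Your approach is therefore not competing with a proof in the paper but rather filling in what the citation is meant to cover, and in that sense it is correct and aligned with the paper's intent: the forward direction is exactly the Cartan--Eilenberg periodic-resolution theorem you invoke, and the reverse direction via the shifting lemma (Lemma~\ref{lemma:yoneda}) is the standard argument.

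Two small remarks. First, your assertion that the existence of an element of $\Proj^k_{\Z G}(\Z,\Z)$ forces $G$ to be finite is true but not proven in your sketch; however, Section~\ref{section:swan-modules-and-projective-extensions} opens by declaring that $G$ is a finite group throughout, so in context the finiteness is already assumed and this step is unnecessary. Second, the phrase ``Tate duality'' is not quite the mechanism that extends periodicity from $n \ge 1$ to all $n \in \Z$; what one actually uses is dimension-shifting in complete cohomology (or equivalently the fact that the periodicity class becomes invertible in the Tate ring), but this is a matter of naming rather than substance.
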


If $G$ has $k$-periodic cohomology then, since $\Proj^k_{\Z G}(\Z,\Z)$ is non-empty, Proposition \ref{prop:image-of-chi} implies that there exists $P \in P(\Z G)$ for which
\[ e(\Proj^k_{\Z G}(\Z,\Z)) = [P] + T_G \subseteq C(\Z G)\]
where $P(\Z G)$ denotes the set of non-zero projective $\Z G$-modules.
We can then quotient by $T_G$ to get a unique class in $C(\Z G)/T_G$ which depends only on $G$ and $k$. The \textit{Swan finiteness obstruction} is defined as
\[ \sigma_k(G)=[P] \in C(\Z G)/ T_G.\]

Recall that a group $G$ has \textit{free period $k$} if there exists $E = (F_*,\partial_*) \in \Proj_{\Z G}^k(\Z, \Z)$ with the $F_i$ free.
The following is \cite[Proposition 5.1]{Sw60a}.

\begin{prop}
Let $G$ have $k$-periodic cohomology. Then $\sigma_k(G) = 0$ if and only if $G$ has free period $k$. 
\end{prop}

\begin{remark}
By a construction of Milnor, this is equivalent to the existence of a finite CW-complex $X$ with $\pi_1(X) \cong G$ and $\widetilde{X} \simeq S^{k-1}$ \cite[Proposition 3.1]{Sw60a}.
Examples of groups with $\sigma_k(G) \ne 0$ were found by Milgram \cite{Mi85}.
\end{remark}

We will conclude this section by giving a constraint on the projective $\Z G$-modules $P$ which can arise as a representative of the Swan finiteness obstruction.

We would like to compare $[P]$ and $[P^*]$ when $\sigma_k(G) = [P] +T_G$. This is difficult for general projectives since there exists finite groups $G$ and projectives $P$ for which $[P^*] \ne \pm [P]$, even in $C(\Z G)/T_G$. For example, we can take $G = \Z /{37^2}$ \cite[Theorem 50.56]{CR87}. However, in our situation, we have the following.

\begin{prop} \label{prop:[P]=-[P^*]-mod-T_G}
If $G$ has $k$-periodic cohomology, and $\sigma_k(G) = [P] + T_G$, then
\[ [P]=-[P^*] \in C(\Z G)/T_G.\]	
\end{prop}

\begin{proof}
By Proposition \ref{prop:image-of-chi}, there exists $E \in \Proj^k_{\Z G}(\Z,\Z)$ with $e(E) = [P]$ and, by forming the direct sum with length two extensions $\Z G \xrightarrow[]{\cong} \Z G$, we can assume that
\[ E \cong (0 \to \Z \xrightarrow[]{i} P \xrightarrow[]{\partial_{k-1}} F_{k-2} \xrightarrow[]{\partial_{k-2}} \cdots \xrightarrow[]{\partial_{1}} F_0 \xrightarrow[]{\varepsilon} \Z \to 0)\]
for some $F_i$ free. Dualising then gives that
\[ E^* \cong (0 \to \Z \xrightarrow[]{\varepsilon^*} F_0 \xrightarrow[]{\partial_{1}^*}  \cdots \xrightarrow[]{\partial_{k-2}^*} F_{k-2} \xrightarrow[]{\partial_{k-1}^*} P^* \xrightarrow[]{i^*} \Z \to 0)\]
and, since $k$ is necessarily even \cite[p261]{CE56}, Schanuel's lemma implies that
\[ \Z \oplus P \oplus P^* \oplus F \cong \Z \oplus F'\]
for some $F$, $F'$ free. By \cite[Lemma 6.2]{Sw60a}, we then get that $[P \oplus P^*] \in T_G$.	
\end{proof}

\begin{remark}
For a finite group $G$, the standard involution on $C(\Z G)$ is given by $[P] \mapsto -[P^*]$ (see \cite[Section 50E]{CR87}). This turns $C(\Z G)$ into a $\Z C_2$-module where the $C_2$-action is given by the involution.
This additional structure has proven to be a useful for computing class groups \cite[p284]{CR87}.
Note that $T_G$ is fixed by this involution. This follows from the fact that $(I,r)^* \cong (N,r) \cong (I,r^{-1})$ by \cite[Lemma 17.1]{Sw83} and Lemma \ref{lemma:swan=swan*} respectively.
Hence the involution induces an involution on $C(\Z G)/T_G$ and so endows it with a natural $\Z C_2$-module structure. With respect to this action, Proposition \ref{prop:[P]=-[P^*]-mod-T_G} says that $\sigma_k(G) \in (C(\Z G)/T_G)^{C_2}$.
\end{remark}

%%%%%%%%%%

\section{Classification of projective chain complexes}
\label{section:cancellation-of-projective-extensions}

We would now like to consider more generally the classification of projective extensions over $\Z G$ with only one fixed end. Throughout this section, $G$ will denote a finite group. For $n \ge 0$, a \textit{projective $n$-complex} $E=(P_*,\partial_*)$ over $\Z G$ is a chain complex consisting of an exact sequence
\[ E =( P_n \xrightarrow[]{\partial_n} P_{n-1} \xrightarrow[]{\partial_{n-1}} \cdots \xrightarrow[]{\partial_1}P_0 )\]
where $H_0(P_*) \cong \Z$ and the $P_i$ are (finitely generated) projective $\Z G$-modules. 
An \textit{algebraic $n$-complex} is a projective $n$-complex such that the $P_i$ are free.

Let $\Proj(G,n)$ denote the set of chain homotopy types of projective $n$-complexes over $\Z G$, which is a graded graph with edges between each $E=(P_*, \partial_*)$ and
\[ E \oplus \Z G =( P_n \oplus \Z G \xrightarrow[]{(\partial_n,0)} P_{n-1} \xrightarrow[]{\partial_{n-1}} \cdots \xrightarrow[]{\partial_1}P_0 ).\]
Similarly, let $\Alg(G,n)$ denote the set of chain homotopy types of algebraic $n$-complexes over $\Z G$, which is also a graded graph under stabilisation.
By extending the projective $n$-complex by $\Ker(\partial_n)$, it is easy to see that there is a bijection
\[ \Proj(G,n) \cong \coprod_{A \in \Mod(\Z G)} \hProj_{\Z G}^{n+1}(\Z,A).\]
By abuse of notation, we will assume they are the same, i.e. that an extension $E \in \Proj(G,n)$ lies in $\hProj_{\Z G}^{n+1}(\Z,A)$ for some $A$.
For a class $\chi \in C(\Z G)$, let $\Proj(G,n;\chi)$ denote the subset of projective extensions $E$ with $e(E) = \chi$. Note that $\Alg(G,n) \cong \Proj(G,n;0)$ for $n \ge 2$.

\subsection{General classification of projective $n$-complexes} \label{ss:Proj(G,n)}

The following is well-known (see \cite[Theorem 1.1]{Ma07} or \cite[Proof of Lemma 8.12]{HPY13}).

\begin{thm} \label{thm:proj-stability}
If $n \ge 0$ and $\chi \in C(\Z G)$, then $\Proj(G,n;\chi)$ is a graded tree, i.e. if $E$, $E' \in \Proj(G,n)$ have $e(E) = e(E')$, then $E \oplus \Z G^i \simeq E' \oplus \Z G^j$ for some $i,j \ge 0$.
\end{thm}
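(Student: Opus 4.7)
The plan is to reduce, via Schanuel's lemma, to the case where $E$ and $E'$ share a common top kernel, then produce a chain map $f \colon E \to E'$ lifting $\id_\Z$ and conclude via the algebraic Whitehead theorem cited in Section \ref{section:ext} (a chain map of projective chain complexes is a chain homotopy equivalence if and only if it induces an isomorphism on homology). Concretely, let $A = \ker \partial_n^{E}$ and $A' = \ker \partial_n^{E'}$, so $E$ and $E'$ give projective extensions in $\Proj^{n+1}_{\Z G}(\Z, A)$ and $\Proj^{n+1}_{\Z G}(\Z, A')$ respectively. Applying iterated Schanuel to the two length-$(n+2)$ exact sequences yields
\[
A \oplus P_n' \oplus P_{n-1} \oplus P_{n-2}' \oplus \cdots \;\cong\; A' \oplus P_n \oplus P_{n-1}' \oplus P_{n-2} \oplus \cdots .
\]
Taking classes in $C(\Z G)$ and using $e(E)=e(E')$ this collapses to $[A]=[A']$, so $A \oplus \Z G^i \cong A' \oplus \Z G^j$ for some $i,j \ge 0$. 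Writing $B$ for the common module, set $E_1 := E \oplus \Z G^i$ and $E_1' := E' \oplus \Z G^j$; these lie in $\hProj^{n+1}_{\Z G}(\Z, B)$ with the same Euler class, since $[\Z G]=0 \in C(\Z G)$.

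Next I would build a chain map $f \colon E_1 \to E_1'$ lifting $\id_{\Z}$ by the standard projective lifting argument, inductively defining $f_k$ using projectivity of the modules of $E_1$ and exactness of $E_1'$ in intermediate degrees. By construction $f$ induces an iso on $H_k$ for $0 \le k \le n-1$ and some endomorphism $\widetilde f \colon B \to B$ on $H_n$. By the algebraic Whitehead theorem, $f$ will be a chain homotopy equivalence as soon as $\widetilde f$ is an isomorphism.

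The main obstacle, and the technical heart of the argument, is arranging that $\widetilde f$ becomes an isomorphism after possibly further stabilisation of both $E_1$ and $E_1'$ by copies of $\Z G$ at the top. The useful flexibility is that $f_n$ may be modified by any $\Z G$-linear $\delta \colon P_n \oplus \Z G^i \to B$, since $\image \delta \subseteq \ker \partial_n^{E_1'}$ preserves the chain map condition and alters $\widetilde f$ by the restriction $\delta|_B$; the additional free $\Z G$-summands that appear after further stabilisation of both sides by the same $\Z G^N$ provide progressively more room to extend a prescribed modification of $\widetilde f$ through the inclusion $B \hookrightarrow P_n \oplus \Z G^{i+N}$. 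Once $\widetilde f$ can be matched with the Schanuel isomorphism on $B$, the algebraic Whitehead theorem (\cite[Theorem 46.6]{Jo03a}) yields $E \oplus \Z G^{i'} \simeq E' \oplus \Z G^{j'}$ as required.
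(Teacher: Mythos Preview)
The paper does not prove this statement; it simply records it as ``well-known'' with references to \cite[Theorem 1.1]{Ma07} and \cite[Proof of Lemma 8.12]{HPY13}. So you are supplying an argument the paper omits, and the comparison is against those sources rather than the paper itself.

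Your Schanuel reduction and the construction of a lift $f$ of $\id_\Z$ are fine (though writing ``$[A]=[A']$'' is sloppy, since $A$ is not projective; what you actually deduce, after complementing the projective summands to free modules, is $A\oplus\Z G^{i}\cong A'\oplus\Z G^{j}$). The genuine gap is your step~3. The modifications of $\tilde f$ that you can realise are exactly the restrictions $\delta|_{B}$ of maps $\delta\colon P_{n}\oplus\Z G^{i}\to B$, and using Lemma~\ref{lemma:relative-inj} one checks that this set coincides with the two-sided ideal $\mathcal P(B)\subseteq\End(B)$ of maps factoring through a projective. Stabilising at the top adds a free summand simultaneously to $B$ and to its ambient module, so the extension problem for the $B\to B$ block is unchanged and ``progressively more room'' is not the operative mechanism. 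In particular there is no reason, from what you have written, that the Schanuel isomorphism lies in $\tilde f+\mathcal P(B)$.

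What actually finishes the argument is an extra idea you have not supplied. From the two lifts $f,g$ one gets $\tilde g\tilde f-\id_B,\ \tilde f\tilde g-\id_B\in\mathcal P(B)$, so $\tilde f$ is an isomorphism in the stable module category. Writing $\tilde g\tilde f-\id_B=\beta\alpha$ through some $\Z G^{N}$ shows that $\left(\begin{smallmatrix}\tilde f\\ \alpha\end{smallmatrix}\right)\colon B\to B\oplus\Z G^{N}$ is split injective with stably free cokernel $K$; after a further stabilisation $K$ becomes free, and then $\left(\begin{smallmatrix}\tilde f\\ \alpha\end{smallmatrix}\right)$ can be completed to an isomorphism of $B\oplus\Z G^{N'}$ which visibly lies in your achievable set (take $a=0$, $c=\left(\begin{smallmatrix}\alpha\\0\end{smallmatrix}\right)$). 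This, or the equivalent route via the congruence classification (Theorem~\ref{thm:congruence-classification}) together with the fact that $e(E)=e(E')$ forces the relevant Swan module to be stably free, is the content of the cited references; your outline stops just short of it.
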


We will now prove a cancellation theorem for projective $n$-complexes. Our proof will be modelled on Hambleton-Kreck's proof  that, if $X$ and $Y$ are finite $2$-complexes with finite fundamental group such that $X \simeq X_0 \vee S^2$ and $X \vee S^2 \simeq Y \vee S^2$, then $X \simeq Y$ \cite[Theorem B]{HK93}. This idea was applied to algebraic $2$-complexes in \cite{Ha18}.

If $A$ is a $\Z G$-module, then $x \in A$ is \textit{unimodular} if there exists a map $f: A \to \Z G$ such that $f(x)=1$. Let $\Um(A) \subseteq A$ denote the set of unimodular elements in $A$.

\begin{lemma}
Let $A$, $B$ be $\Z G$-modules. Then
\begin{enumerate}[\normalfont (i)]
\item If $\varphi : A \to B$ is an isomorphism, then $\varphi(\Um(A)) = \Um(B)$
\item $(0,1) \in \Um(A \oplus \Z G)$, i.e. if $\varphi: A \oplus \Z G \to B$ is an isomorphism, then $\varphi(0,1) \in \Um(B)$.
\end{enumerate}	
\end{lemma}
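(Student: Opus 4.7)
The plan is to verify both parts essentially from the definition: $x \in \Um(A)$ means there exists a $\Z G$-linear map $f : A \to \Z G$ with $f(x) = 1$, and such a witness transports naturally under any $\Z G$-module isomorphism.

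For (i), given $x \in \Um(A)$ and a witness $f : A \to \Z G$ with $f(x) = 1$, I would consider the composite $g = f \circ \varphi^{-1} : A \to \Z G$. This is again a $\Z G$-map, and $g(\varphi(x)) = f(x) = 1$, so $\varphi(x) \in \Um(A)$. Hence $\varphi(\Um(A)) \subseteq \Um(A)$, and applying the same argument to the automorphism $\varphi^{-1}$ gives the reverse inclusion.

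For (ii), I would first note that $(0,1) \in A_0 \oplus \Z G$ is itself unimodular: the projection $\pi : A_0 \oplus \Z G \to \Z G$, $(a,r) \mapsto r$, is a $\Z G$-map with $\pi(0,1) = 1$. Then $f = \pi \circ \varphi^{-1} : A \to \Z G$ is a $\Z G$-map with $f(\varphi(0,1)) = \pi(0,1) = 1$, so $\varphi(0,1) \in \Um(A)$.

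The argument is essentially definitional, so there is no real obstacle. The content of the lemma is just that unimodularity is preserved under isomorphism; this will presumably be used in the subsequent cancellation theorem to identify a distinguished unimodular element coming from a free $\Z G$-summand and then peel it off via a suitable automorphism.
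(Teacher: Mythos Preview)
Your proof is correct. The paper does not actually supply a proof of this lemma; it is stated and then used immediately, so the authors treat it as immediate from the definition of unimodularity, which is exactly the verification you give.
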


Suppose a $\Z G$-module $A$ has a splitting $A = A_1 \oplus A_2 \oplus \cdots \oplus A_n$. Then a map $f: A_i \to A_j$ can be viewed as an endomorphism of $A$ by extending it to vanish everywhere else. Write $\GL(A)$ for the group of automorphisms of $A$ and define
\[ E(A_i,A_j) = \langle 1+f, 1+g: f:A_i \to A_j, g:A_j \to A_i \rangle \le \GL(A)\]
to be the subgroup of \textit{elementary automorphisms} for $i \ne j$, where $1: A \to A$ denotes the identity map.

The main result we will use is the following, which can be proven by combining \cite[Corollary 1.12]{HK93} with \cite[Lemma 1.16]{HK93}. Let $\Z_{(p)} = \{ \frac{a}{b} : a,b \in \Z, p \nmid b\} \le \Q$ denotes the localisation at a prime $p$ and $A_{(p)} = A \otimes \Z_{(p)}$. 

\begin{thm} \label{thm:HK}
Suppose $A$ is a $\Z G$-module for which $\Z_{(p)} \oplus A_{(p)}$ is a free $\Z_{(p)} G$-module for all but finitely many primes $p$. If $F_1$, $F_2 \cong \Z G$, then 
\[ \mathcal{G} = \langle E(F_1,A\oplus F_2), E(F_2,A\oplus F_1) \rangle \le \GL(A \oplus F_1 \oplus F_2)\]
acts transitively on $\Um(A \oplus F_1 \oplus F_2)$.
\end{thm}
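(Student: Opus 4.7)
The plan is to combine the two cited Hambleton--Kreck lemmas in the standard way. Writing $M = A \oplus F_1 \oplus F_2$ and taking as base point $v = (0, 1, 0) \in \Um(M)$, where $1$ denotes a generator of $F_1 \cong \Z G$, it suffices to show that every $u = (a, b, c) \in \Um(M)$ lies in the $\mathcal{G}$-orbit of $v$.

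The first step is to use moves from $\mathcal{G}$ to replace $u$ by an element of the form $(a', 1, c')$, i.e.\ to pivot the $F_1$-coordinate to a unit. This should be the content of [HK93, Cor.~1.12], which I expect to give transitivity of an appropriate elementary group on $\Um(A' \oplus \Z G)$ whenever $A'$ satisfies a local-freeness condition. Applying this with $A' = A \oplus F_2$ reduces matters to verifying that $\Z_{(p)} \oplus (A \oplus F_2)_{(p)}$ is $\Z_{(p)} G$-free for almost all $p$; since $F_2 \cong \Z G$ is already free, this follows directly from the hypothesis on $A$. Once $u$ has the form $(a', 1, c')$, the elementary automorphism $1 + g \in E(F_1, A \oplus F_2)$ with $g \colon F_1 \to A \oplus F_2$ defined by $g(1) = (-a', -c')$ sends $u$ to $v$, completing the reduction.

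The main obstacle is that the elementary group to which Cor.~1.12 applies a priori may strictly exceed $\mathcal{G}$: its natural definition involves all transvections between the designated $\Z G$-summand and its complement, whereas $\mathcal{G}$ is only generated by transvections involving $F_1$ or $F_2$ individually. This is precisely where [HK93, Lem.~1.16] enters -- it should express each such ``extra'' elementary operation as a product (in practice, a Whitehead-style commutator) of moves drawn from $E(F_1, A \oplus F_2)$ and $E(F_2, A \oplus F_1)$, certifying that every move furnished by Cor.~1.12 already lies in $\mathcal{G}$. The key role of the second summand $F_2$ is thus to supply the second pivot needed for this commutator identity to close up. Combining the two results then yields the desired transitivity of $\mathcal{G}$ on $\Um(M)$.
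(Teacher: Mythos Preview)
The paper does not supply its own proof here: it merely records that the statement ``can be proven by combining [HK93, Corollary 1.12] with [HK93, Lemma 1.16].'' Your proposal is precisely an unpacking of that citation, and the shape you describe---invoke Cor.~1.12 with the complement $A' = A \oplus F_2$ to arrange the $F_1$-coordinate to be a unit, then kill the remaining coordinates by a single element of $E(F_1, A \oplus F_2)$, with Lem.~1.16 used to ensure that the elementary moves produced along the way all lie in $\mathcal{G}$---is the intended argument. Your verification of the local-freeness hypothesis for $A \oplus F_2$ is also correct.

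The only caveat is that your account of what Cor.~1.12 and Lem.~1.16 actually assert is explicitly speculative (``should be the content of'', ``I expect'', ``it should express''). In Hambleton--Kreck the relevant transitivity statement already requires two free summands, and Lem.~1.16 is a commutator identity of the Whitehead type showing that the needed transvections factor through moves pivoting on $F_1$ and $F_2$; so the division of labour between the two results is slightly different from your first guess (Cor.~1.12 is not a one-free-summand statement whose elementary group happens to overshoot $\mathcal{G}$). This does not affect the correctness of your overall outline, but to make the argument rigorous you should record the exact statements of the two cited results rather than interpolating them.
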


We will now establish criteria for which the above conditions hold for a $\Z G$-module $A$. First recall that, by an extension of Maschke's theorem of representations, the group ring $R G$ is semisimple whenever $R$ is a commutative ring such that $|G| \in R^\times$. This is the case when $R= \Z_{(p)}$ for $p$ a prime not dividing $|G|$.
This has the following consequence.

\begin{lemma} \label{lemma:maschke}
Let $n \ge 1$ be odd, let $p$ be a prime not dividing $|G|$ and let $A$ be a $\Z G$-module for which $\Proj^n_{\Z G}(\Z,A)$ is non-empty. Then $\Z_{(p)} \oplus A_{(p)}$ is a free $\Z_{(p)} G$-module.
\end{lemma}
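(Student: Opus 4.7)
The plan is to tensor the given extension with $\Z_{(p)}$ and use the standard Maschke averaging argument, then upgrade the resulting projective identification to a genuinely free module.

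First I would pick any $E = (P_\ast,\partial_\ast) \in \Proj^n_{\Z G}(\Z,A)$. Since $\Z_{(p)}$ is flat over $\Z$, tensoring yields an exact sequence
\[ 0 \to A_{(p)} \to (P_{n-1})_{(p)} \to \cdots \to (P_0)_{(p)} \to \Z_{(p)} \to 0 \]
of $\Z_{(p)} G$-modules, with consecutive kernels $K_i$ (where $K_0 = \Z_{(p)}$ and $K_n = A_{(p)}$). Because $|G|$ is a unit in $\Z_{(p)}$ and each $K_i$ is $\Z_{(p)}$-free (as a submodule of a free $\Z_{(p)}$-module, $\Z_{(p)}$ being a PID), the Maschke averaging operator $\tfrac{1}{|G|}\sum_{g \in G} g \cdot (-) \cdot g^{-1}$ turns any $\Z_{(p)}$-linear splitting of the short exact sequence $0 \to K_{i+1} \to (P_i)_{(p)} \to K_i \to 0$ into a $\Z_{(p)} G$-linear splitting. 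Iterating and using that $n$ is odd, the telescoping yields
\[ \Z_{(p)} \oplus A_{(p)} \oplus \bigoplus_{i \text{ odd}} (P_i)_{(p)} \cong \bigoplus_{i \text{ even}} (P_i)_{(p)}. \]

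To finish, I would invoke local freeness of projectives (Swan's theorem): for $p \nmid |G|$, any finitely generated projective $\Z G$-module $P$ has $P_{(p)}$ free over $\Z_{(p)} G$, since the central idempotents of $\Q G$ lie in $\Z_{(p)} G$ and each Wedderburn factor of $\Z_{(p)} G$ is a maximal order with trivial local class group. Applying this to each $P_i$ makes both sides of the displayed isomorphism free $\Z_{(p)} G$-modules, so $\Z_{(p)} \oplus A_{(p)}$ is stably free. Since $\Z_{(p)} G$ is semi-local (finite over the local ring $\Z_{(p)}$), Bass cancellation upgrades stably free to free, completing the proof. The main obstacle is this last transition: Maschke alone only exhibits $\Z_{(p)} \oplus A_{(p)}$ as a summand of a projective, and the genuine freeness conclusion requires the external inputs of Swan local freeness together with semi-local cancellation.
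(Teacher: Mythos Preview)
Your proposal is correct and is essentially the argument the paper has in mind: the paper gives no proof beyond the remark that it is ``an easy extension of Maschke's theorem on representations,'' and your Maschke averaging to split the localized extension, followed by the telescoping identity, is precisely that extension. You are also right to flag that Maschke alone only yields projectivity of $\Z_{(p)}\oplus A_{(p)}$, and that the passage to genuine freeness requires the two additional standard inputs you name (Swan's local freeness of $\Z G$-projectives at primes $p\nmid |G|$, and cancellation over the semi-local ring $\Z_{(p)}G$); the paper's one-line remark suppresses these, but they are needed.
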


\begin{proof}
Let $E=(P_*,\partial_*) \in \Proj_{\Z G}^n(\Z,A)$. Recall that localisation is an exact functor (since, for example, $\Z_{(p)}$ is a flat module). Hence we obtain $E_{(p)} =((P_*)_{(p)}),\partial_*) \in \Proj_{\Z_{(p)} G}^n(\Z_{(p)},A_{(p)})$ where the $\partial_*$ are the induced maps. By the extension of Maschke's theorem mentioned above, $\Z_{(p)} G$ is semisimple and so the exact sequence $E_{(p)}$ splits completely. This implies that there is an isomorphism of $\Z_{(p)} G$-modules:
\[ \Z_{(p)} \oplus A_{(p)} \oplus \bigoplus_{\text{$i$ odd}} (P_i)_{(p)}  \cong \bigoplus_{\text{$i$ even}} (P_i)_{(p)}. \]

By Proposition \ref{prop:LF}, the $(P_i)_{(p)}$ are all free $\Z_{(p)} G$-modules. It follows that $\Z_{(p)} \oplus A_{(p)}$ is a stably free $\Z_{(p)} G$-module. Since $\Z_{(p)} G$ is semisimple, this implies that $\Z_{(p)} \oplus A_{(p)}$ is a free $\Z_{(p)} G$-module.
\end{proof}

Note that the fact that $\GL(A \oplus \Z G^2)$ acts transitively on $\Um(A \oplus \Z G^2)$ already implies the following cancellation theorem for modules.

\begin{corollary} \label{cor:HK}
Suppose $A$ is a $\Z G$-module, $A \cong A_0 \oplus \Z G$ and $\Z_{(p)} \oplus (A_0)_{(p)}$ is a free $\Z_{(p)} G$-module for all but finitely many primes $p$. Then $A \oplus \Z G \cong A' \oplus \Z G$ implies $A \cong A'$.
\end{corollary}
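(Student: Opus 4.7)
The plan is to derive the module cancellation statement directly from the transitive action of $\GL(A \oplus \Z G)$ on $\Um(A \oplus \Z G)$, exactly as hinted by the parenthetical remark preceding the statement.

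First I would set up the application of Theorem \ref{thm:HK}. Using the given decomposition $A \cong A_0 \oplus \Z G$, write
\[ A \oplus \Z G \cong A_0 \oplus \Z G \oplus \Z G.\]
By hypothesis, $\Z_{(p)} \oplus (A_0)_{(p)}$ is a free $\Z_{(p)}G$-module for all but finitely many primes $p$, so Theorem \ref{thm:HK} applies with $A_0$ playing the role of the module ``$A$'' in that theorem and with $F_1 = F_2 = \Z G$. The conclusion I need is just the corollary of transitivity: $\GL(A \oplus \Z G)$ acts transitively on $\Um(A \oplus \Z G)$.

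Next I would translate an arbitrary isomorphism $\varphi: A \oplus \Z G \to A' \oplus \Z G$ into a pair of unimodular elements and compare. By part (ii) of the lemma preceding Theorem \ref{thm:HK}, the element $e := (0,1) \in A \oplus \Z G$ is unimodular, and applying the same statement to the isomorphism $\varphi^{-1}: A' \oplus \Z G \to A \oplus \Z G$ shows that $u := \varphi^{-1}(0_{A'},1)$ is unimodular in $A \oplus \Z G$ as well. By the transitivity established in the previous step, there exists $\psi \in \GL(A \oplus \Z G)$ with $\psi(e) = u$. Set $\eta := \varphi \circ \psi : A \oplus \Z G \to A' \oplus \Z G$.

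Finally I would descend to the quotient. By construction $\eta(0,1) = (0,1)$, and since $\eta$ is $\Z G$-linear it carries the submodule $\Z G \cdot (0,1) \subset A \oplus \Z G$ isomorphically onto the submodule $\Z G \cdot (0,1) \subset A' \oplus \Z G$. The resulting commuting diagram with exact rows
\[
\begin{tikzcd}
0 \ar[r] & \Z G \ar[r] \ar[d,"\id"] & A \oplus \Z G \ar[r] \ar[d,"\eta"] & A \ar[r] \ar[d,dashed,"\bar\eta"] & 0\\
0 \ar[r] & \Z G \ar[r] & A' \oplus \Z G \ar[r] & A' \ar[r] & 0
\end{tikzcd}
\]
together with the five lemma yields an isomorphism $\bar\eta : A \xrightarrow{\cong} A'$, which is what is required.

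There is no real obstacle here beyond bookkeeping; the only thing to be careful about is not conflating the module $A$ of the corollary with the module ``$A$'' of Theorem \ref{thm:HK}. The required local freeness hypothesis in the theorem is precisely the hypothesis placed on $A_0$, and the two distinguished free summands $F_1, F_2$ are supplied by the given $\Z G$ summand of $A$ together with the cancelled $\Z G$.
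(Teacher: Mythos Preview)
Your proposal is correct and follows essentially the same argument as the paper: apply Theorem~\ref{thm:HK} to $A_0 \oplus \Z G \oplus \Z G$ to obtain transitivity of $\GL(A \oplus \Z G)$ on unimodulars, move $(0,1)$ to $\varphi^{-1}(0,1)$ by an automorphism, and then pass to the quotient by the fixed $\Z G$ summand. The paper phrases the final step as a direct restriction-then-quotient rather than invoking the five lemma, but this is only a cosmetic difference.
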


\begin{proof}
Let $\psi: A \oplus \Z G \to A' \oplus \Z G$ be an isomorphism and let $x = \psi^{-1}(0,1) \in \Um(A \oplus \Z G)$. Since $A= A_0 \oplus \Z G$, Theorem \ref{thm:HK} implies that $\GL(A \oplus \Z G)$ acts transitively on $\Um(A \oplus \Z G)$ and so there is an isomorphism $\varphi: A \oplus \Z G \to A \oplus \Z G$ such that $\varphi(0,1) = x$. Hence $\psi \circ \varphi: A \oplus \Z G \to A' \oplus \Z G$ has $(\psi \circ \varphi)(0,1)=(0,1)$ and so induces an isomorphism $(\psi \circ \varphi) \mid_A : A \to A' \oplus \Z G/\IM(0 \oplus \Z G) \cong A'$.
\end{proof}

We will upgrade the above argument from modules to projective $n$-complexes. The existence of a well-understood subgroup $\mathcal{G} \le \GL(A \oplus \Z G^2)$ which acts transitively on $\Um(A \oplus \Z G^2)$ is important since we need only show that elements in $\mathcal{G}$ can be extended to chain homotopy equivalences on the short exact sequences.

\begin{thm} \label{thm:main-cancellation}
Let $n \ge 0$ be even and let $E$, $E' \in \Proj(G,n)$. If $E \simeq E_0 \oplus \Z G$ and $E \oplus \Z G \simeq E' \oplus \Z G$, then $E \simeq E'$.
\end{thm}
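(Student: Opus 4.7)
The strategy is to upgrade the module-level cancellation of Corollary \ref{cor:HK} to projective chain complexes: use the transitivity in Theorem \ref{thm:HK} to normalise a given chain homotopy equivalence on the top kernel module, then peel off the stabilising $\Z G$-summand. Identify $E, E' \in \Proj(G,n)$ with extensions via their top kernel modules: $E \in \hProj^{n+1}_{\Z G}(\Z, A)$ with $A = A_0 \oplus \Z G$ (where $A_0 = \Ker(\partial_n^{E_0})$, using $E \simeq E_0 \oplus \Z G$), and $E' \in \hProj^{n+1}_{\Z G}(\Z, A')$. The kernel module of $E \oplus \Z G$ then has the splitting $A \oplus \Z G = A_0 \oplus F_1 \oplus F_2$ with $F_1, F_2 \cong \Z G$ the two added summands at degree $n$. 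Since $n+1$ is odd, Lemma \ref{lemma:maschke} applied to $E_0 \in \hProj^{n+1}_{\Z G}(\Z, A_0)$ shows that $A_0$ satisfies the freeness hypothesis of Theorem \ref{thm:HK}.

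Given a chain homotopy equivalence $\Phi : E \oplus \Z G \to E' \oplus \Z G$, its restriction on the top kernel is an isomorphism $\varphi_A : A \oplus \Z G \to A' \oplus \Z G$, and $x := \varphi_A^{-1}(0,1)$ is unimodular in $A_0 \oplus F_1 \oplus F_2$. Theorem \ref{thm:HK} produces $\psi \in \mathcal{G} \le \GL(A_0 \oplus F_1 \oplus F_2)$ with $\psi(0,0,1) = x$, where $(0,0,1)$ is the generator of $F_2$. The key step is to lift $\psi$ to a chain homotopy equivalence $\widetilde{\psi} : E \oplus \Z G \to E \oplus \Z G$, taken to be the identity in degrees $\ne n$. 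Since $\psi$ is a product of elementary automorphisms $1+f$ with either $f : F_i \to A_0 \oplus F_j$ or $f : A_0 \oplus F_j \to F_i$, it suffices to lift each generator: in the first case, compose the $A_0$-component of $f$ with the inclusion $A_0 \hookrightarrow P_n^{E_0}$; in the second case, apply Lemma \ref{lemma:relative-inj} to extend $f|_{A_0} : A_0 \to F_i$ across $P_n^{E_0}$, which is permitted because $\IM(\partial_n^{E_0}) \subseteq P_{n-1}^{E_0}$ is a $\Z G$-lattice. A direct check shows each lift commutes with $\partial_n^{E \oplus \Z G}$, since the correction terms at level $n$ either land in $A_0 = \Ker(\partial_n^{E_0})$ or in an $F_i$, whose differential is zero.

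The composite $\Phi \circ \widetilde{\psi}$ satisfies $(\Phi \circ \widetilde{\psi})_n(0,1) = (0,1)$ and hence has block form
\[ (\Phi \circ \widetilde{\psi})_n = \begin{pmatrix} \alpha & 0 \\ \gamma & 1 \end{pmatrix} : P_n^E \oplus \Z G \to P_n^{E'} \oplus \Z G. \]
Precomposing $\Phi \circ \widetilde{\psi}$ with the chain automorphism of $E \oplus \Z G$ that is $\begin{pmatrix} 1 & 0 \\ -\gamma & 1 \end{pmatrix}$ in degree $n$ and the identity elsewhere (which is a chain map because its degree-$n$ correction takes values in the $\Z G$-summand with trivial differential) reduces the degree-$n$ matrix to $\alpha \oplus \id_{\Z G}$. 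The restriction of this new equivalence to the $E$-summand gives a chain map $\alpha^{\mathrm{tot}} : E \to E'$ which is an isomorphism on each $H_k$: for $k < n$ this is because the added $\Z G$-summand contributes nothing to $H_k$, while for $k = n$ the equivalence splits on $H_n = A \oplus \Z G \to A' \oplus \Z G$ as $(\alpha^{\mathrm{tot}})_* \oplus \id_{\Z G}$, so $(\alpha^{\mathrm{tot}})_*$ must be an isomorphism. Since $E$ and $E'$ consist of projectives, $\alpha^{\mathrm{tot}}$ is a chain homotopy equivalence, giving $E \simeq E'$.

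The main obstacle is the lifting step: realising every elementary generator of $\mathcal{G}$ as a chain-level automorphism of $E \oplus \Z G$. This depends crucially on $F_1, F_2$ sitting in the top degree with trivial differential, and on the relative injectivity of projectives against $\Z G$-lattices (Lemma \ref{lemma:relative-inj}) to extend maps out of the submodule $A_0$ across $P_n^{E_0}$.
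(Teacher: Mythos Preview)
Your proposal is correct and follows essentially the same approach as the paper: set up the splitting $A\oplus\Z G=A_0\oplus F_1\oplus F_2$, invoke Lemma~\ref{lemma:maschke} and Theorem~\ref{thm:HK} to move the unimodular element $x=\varphi_A^{-1}(0,1)$ to $(0,0,1)$ by an element of $\mathcal G$, lift each elementary generator of $\mathcal G$ to a self chain homotopy equivalence of $E\oplus\Z G$ using Lemma~\ref{lemma:relative-inj} (for maps out of $A_0$) or the inclusion $A_0\hookrightarrow P_n^{E_0}$ (for maps into $A_0$), and then peel off the $\Z G$-summand.

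The only noticeable difference is cosmetic. After composing, the paper asserts directly that the map on $A\oplus\Z G$ and on $P_n\oplus\Z G$ is block diagonal; you instead observe it has the form $\left(\begin{smallmatrix}\alpha&0\\\gamma&1\end{smallmatrix}\right)$ and correct by precomposing with $\left(\begin{smallmatrix}1&0\\-\gamma&1\end{smallmatrix}\right)$ before restricting. Your extra step is harmless and arguably more careful, since sending $(0,1)\mapsto(0,1)$ only forces the second \emph{column} to be $(0,1)^T$; the paper's argument still goes through because the upper-left block $\phi_A$ is an isomorphism and the relevant commutativity $i'\phi_A=\phi_P i$ follows from the $(1,1)$-entry of the square regardless of the off-diagonal $\gamma$. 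Either way the restricted map is a chain homotopy equivalence $E\to E'$.
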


\begin{proof}
Let $E_0 \in \hProj^{n+1}_{\Z G}(\Z,A_0)$, $E=(P_*,\partial_*) \in \hProj^{n+1}_{\Z G}(\Z,A)$ and $E' =(P_*',\partial_*') \in \hProj^{n+1}_{\Z G}(\Z,A')$. If $\psi: E \oplus \Z G \to E' \oplus \Z G$ denotes the given chain homotopy equivalence in $\hProj^{n+1}_{\Z G}(\Z,A_0 \oplus \Z G^2)$ and $\psi_A: A_0 \oplus \Z G^2 \to A' \oplus \Z G$ is the induced map on the left, consider $x = \psi_A^{-1}(0,1) \in \Um(A_0 \oplus \Z G^2)$.

We now claim that there exists a self chain homotopy equivalence $\varphi: E \oplus \Z G \to E \oplus \Z G$ such that the induced map $\varphi_A: A \oplus \Z G \to A \oplus \Z G$ has $\varphi_A(0,1) = x$. 

Let $F_1$, $F_2 \cong \Z G$ be defined so that $A = A_0 \oplus F_1$ and $A \oplus \Z G = A_0 \oplus F_1 \oplus F_2$. Since $\Proj^{n+1}_{\Z G}(\Z,A_0)$ is non-empty and $n+1$ is odd, we can combine Theorem \ref{thm:HK} and Lemma \ref{lemma:maschke} to get that there exists $\varphi_A \in \mathcal{G} = \langle E(F_1,A_0\oplus F_2), E(F_2,A_0\oplus F_1) \rangle \le GL(A_0 \oplus F_1 \oplus F_2)$ such that $\varphi_A(0,0,1)=x$. We claim that $\varphi_A$ can be extended to a chain homotopy equivalence $\varphi: E \oplus \Z G \to E \oplus \Z G$.

First recall that $E(F_2,A_0 \oplus F_1) = E(F_2,A) \le \GL(A \oplus F_2)$ is generated by elements of the form $\left(\begin{smallmatrix}  1 & 0 \\ f & 1\end{smallmatrix}\right)$ for $f: A \to F_2$ and $\left(\begin{smallmatrix}  1 & g \\ 0 & 1\end{smallmatrix}\right)$ for $g: F_2 \to A$.

If $i: A \hookrightarrow P$, then there exists $\widetilde{f} : P \to \Z G$ such that $\widetilde{f} \circ i = f$ by Lemma \ref{lemma:relative-inj}. It is straightforward to verify that the following diagrams commute, and so are chain homotopy equivalences.
\[
\begin{tikzcd}[row sep=.5cm, column sep=small]
E \oplus \Z G \ar[d,"\varphi_1"] \\
E \oplus \Z G
\end{tikzcd}
\hspace{-2.5mm} = \left( 
\begin{tikzcd}
[ampersand replacement=\&]
	0 \ar[r] \& A \oplus \Z G \ar[r,"\text{$\left(\begin{smallmatrix}  i & 0 \\ 0 & 1\end{smallmatrix}\right)$}"] \ar[d,"\text{$\left(\begin{smallmatrix}  1 & 0 \\ f & 1\end{smallmatrix}\right)$}"] \& P_n \oplus \Z G \ar[r,"\text{$(\partial_{n},0)$}"] \ar[d,"\text{$\left(\begin{smallmatrix}  1 & 0 \\ \widetilde{f} & 1\end{smallmatrix}\right)$}"] \& P_{n-1} \ar[r,"\partial_{n-1}"] \ar[d,"\id_{P_{n-1}}"] \& \cdots \ar[r,"\partial_1"] \& P_0 \ar[r] \ar[d,"\id_{P_0}"] \& 0 \\
	0 \ar[r] \& A \oplus \Z G \ar[r,"\text{$\left(\begin{smallmatrix}  i & 0 \\ 0 & 1 \end{smallmatrix}\right)$}"] \& P_n \oplus \Z G \ar[r,"\text{$(\partial_n,0)$}"] \& P_{n-1} \ar[r,"\partial_{n-1}"] \& \cdots \ar[r,"\partial_1"] \& P_0 \ar[r] \& 0
\end{tikzcd}
\right)
\]

\[
\begin{tikzcd}[row sep=.5cm, column sep=small]
E \oplus \Z G \ar[d,"\varphi_2"] \\
E \oplus \Z G
\end{tikzcd}
\hspace{-2.5mm} = \left( 
\begin{tikzcd}
[ampersand replacement=\&]
	0 \ar[r] \& A \oplus \Z G \ar[r,"\text{$\left(\begin{smallmatrix}  i & 0 \\ 0 & 1\end{smallmatrix}\right)$}"] \ar[d,"\text{$\left(\begin{smallmatrix}   1 & g \\ 0 & 1\end{smallmatrix}\right)$}"] \& P_n \oplus \Z G \ar[r,"\text{$(\partial_{n},0)$}"] \ar[d,"\text{$\left(\begin{smallmatrix}  1 & i \circ g \\ 0 & 1\end{smallmatrix}\right)$}"] \& P_{n-1} \ar[r,"\partial_{n-1}"] \ar[d,"\id_{P_{n-1}}"] \& \cdots \ar[r,"\partial_1"] \& P_0 \ar[r] \ar[d,"\id_{P_0}"] \& 0 \\
	0 \ar[r] \& A \oplus \Z G \ar[r,"\text{$\left(\begin{smallmatrix}  i & 0 \\ 0 & 1 \end{smallmatrix}\right)$}"] \& P_n \oplus \Z G \ar[r,"\text{$(\partial_n,0)$}"] \& P_{n-1} \ar[r,"\partial_{n-1}"] \& \cdots \ar[r,"\partial_1"] \& P_0 \ar[r] \& 0
\end{tikzcd}
\right)
\]

Similarly, we can show that the generators of $E(F_1,A_0 \oplus F_2)$ extend to chain homotopy equivalences. Hence, by writing $\varphi_A \in \mathcal{G}$ as the composition of maps of this form, we can get a chain homotopy equivalence $\varphi: E \oplus \Z G \to E \oplus \Z G$ by taking the composition of equivalences on each of the generators.

Now consider the map $\psi \circ \varphi = (\psi_A \circ \varphi_A,\psi_P \circ \varphi_P,\id, \cdots, \id): E \oplus \Z G \to E' \oplus \Z G$.
Since $(\psi_A \circ \varphi_A)(0,1)=(0,1)$, it must have the form $\psi_A \circ \varphi_A = \left(\begin{smallmatrix} \phi_A  & 0 \\ 0 & 1 \end{smallmatrix}\right)$ since it is an isomorphism. By commutativity, $(\psi_P \circ \varphi_P)(0,1) = (0,1)$ and so similarly $\psi_P \circ \varphi_P = \left(\begin{smallmatrix} \phi_P  & 0 \\ 0 & 1 \end{smallmatrix}\right)$ for some $\phi_P : P \to P'$. We are now done by noting that the triple $(\phi_A,\phi_P,\id, \cdots, \id)$ defines a chain homotopy equivalence $E \simeq E'$.
\end{proof}

We say that a graded tree is a \textit{fork} if it has a single vertex at each non-minimal grade and a finite set of a vertices at the minimal grade.

\begin{figure}[h] \vspace{-1.5mm}
\begin{tikzpicture}
\draw[fill=black] (0,0) circle (2pt);
\draw[fill=black] (1,0) circle (2pt);
\draw[fill=black] (2,0) circle (2pt);
\draw[fill=black] (3,0) circle (2pt);
\draw[fill=black] (4,0) circle (2pt);
\draw[fill=black] (2,1) circle (2pt);
\draw[fill=black] (2,2) circle (2pt);
\draw[fill=black] (2,3) circle (2pt);
\node at (2,3.6) {$\vdots$};
\draw[thick] (0,0) -- (2,1) (1,0) -- (2,1) (2,0) -- (2,1) (3,0) -- (2,1) (4,0) -- (2,1) -- (2,2) -- (2,3);
\end{tikzpicture}
\caption{A graded tree which is a fork} \vspace{-3mm}
\label{figure:fork}
\end{figure}

\begin{corollary} \label{cor:fork}
If $n \ge 0$ is even, $G$ is a finite group and $\chi \in C(\Z G)$, then $\Proj(G,n;\chi)$ is a fork. In particular, $\Alg(G,n)$ is a fork for $n \ge 2$ even.
\end{corollary}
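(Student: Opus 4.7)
The plan is to combine Theorems \ref{thm:proj-stability} and \ref{thm:main-cancellation} with a standard finiteness argument from integral representation theory. By Theorem \ref{thm:proj-stability}, $\Proj(G,n;\chi)$ is already a graded tree, so it remains to verify (a) each non-minimal grade contains a single vertex and (b) the minimum grade contains only finitely many. Since $\Alg(G,n) \cong \Proj(G,n;0)$ for $n \ge 2$, the particular statement about $\Alg(G,n)$ will follow from the general one by taking $\chi = 0$.

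For (a), I would take $E_1, E_2 \in \Proj(G,n;\chi)$ at the same non-minimal grade. Theorem \ref{thm:proj-stability} supplies a common stabilization $E_1 \oplus \Z G^i \simeq E_2 \oplus \Z G^j$, and matching grades forces $i = j$. I would then induct downward on $i$: assuming $E_1 \oplus \Z G^i \simeq E_2 \oplus \Z G^i$ with $i \ge 1$, Theorem \ref{thm:main-cancellation} applied to $E = E_1 \oplus \Z G^{i-1}$ and $E' = E_2 \oplus \Z G^{i-1}$ yields $E_1 \oplus \Z G^{i-1} \simeq E_2 \oplus \Z G^{i-1}$. The required hypothesis that $E$ be a stabilization is automatic for $i \ge 2$ and, for $i=1$, is supplied by the non-minimality of $E_1$. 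This is the only subtle step and is precisely where the parity assumption on $n$ is consumed, via Theorem \ref{thm:main-cancellation}. Iterating down to $i=0$ gives $E_1 \simeq E_2$.

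For (b), let $\hat E \in \hProj^{n+1}_{\Z G}(\Z, \hat A)$ be the unique vertex at grade $g_{\min}+1$ furnished by (a). Every minimum vertex $E$ stabilizes to $\hat E$, so its kernel module $A_E := \Ker(\partial_n^E)$ is a $\Z G$-lattice satisfying $A_E \oplus \Z G \cong \hat A$. All such $A_E$ share a common $\Z$-rank, namely $\rank_\Z \hat A - |G|$, so the Jordan--Zassenhaus theorem provides only finitely many isomorphism classes among them. For each such class $[A]$, Theorem \ref{thm:congruence-classification} identifies $\Proj^{n+1}_{\Z G}(\Z, A)$ with $(\Z/|G|)^\times$, hence is finite, and $\hProj^{n+1}_{\Z G}(\Z, A)$ is a quotient thereof. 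Summing over the finitely many classes $[A]$ shows the minimum grade contains only finitely many vertices, completing the proof.
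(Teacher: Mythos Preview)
Your argument is correct and follows the paper's intended route: the paper states the corollary without proof, as an immediate consequence of Theorem~\ref{thm:main-cancellation} together with Theorem~\ref{thm:proj-stability}, and your downward induction in (a) is exactly this. Your Jordan--Zassenhaus argument in (b) fills in the finiteness at the minimal grade, which the paper leaves implicit.

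One minor point in (a): the claim that non-minimality of $E_1$ furnishes a splitting $E_1 \simeq E_0 \oplus \Z G$ is true but is not quite automatic---it already needs Theorem~\ref{thm:main-cancellation}. A clean way to arrange the induction so that the stabilisation hypothesis is always visibly available is to compare $E_1$ not to $E_2$ but to $F \oplus \Z G^{\,g-g_{\min}}$ for any fixed $F$ at the minimal grade: this expression is manifestly a stabilisation at every step of the downward cancellation, so Theorem~\ref{thm:main-cancellation} applies throughout and yields $E_1 \simeq F \oplus \Z G^{\,g-g_{\min}}$ directly, giving both uniqueness at grade $g$ and the stabilisation form simultaneously.
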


This recovers the even-dimensional case of a result of Browning \cite[Theorem 5.4]{Br79a}. This fails in odd dimensions, i.e. there are examples of finite groups $G$ for which $\Alg(G,n)$ is not a fork for some $n$ odd \cite{Dy79}.

\subsection{Projective $0$-complexes and the unstable Euler class}

We now consider the case $n=0$. Recall that $P(\Z G)$ denotes the set of $\Z G$-module isomorphism classes of (finitely generated) non-zero projective $\Z G$-modules. 
This is a graded graph with edges between each $P$ and $P \oplus \Z G$.

Note that a projective $0$-complex has the form
\[ E = (0 \to A \xrightarrow[]{i} P \xrightarrow[]{\varepsilon} \Z \to 0 ),\]
and so consists of a non-zero projective module $P \in P(\Z G)$ as well as the additional data $(A, i, \varepsilon)$.
If $\wh e$ is the unstable Euler class, then $\wh e : \Proj(G,0) \to P(\Z G)$ is a map of graded graphs since $\wh e(E \oplus \Z G) \cong  \wh e(E) \oplus \Z G$. We will now show the following:

\begin{thm} \label{thm:stab1-to-modules}
If $G$ is a finite group, then the unstable Euler class gives an isomorphism of graded graphs
		\[ \wh e : \Proj(G,0) \to P(\Z G). \]
\end{thm}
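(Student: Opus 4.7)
The plan is to verify that $\wh e$ is a well-defined map of graded graphs and then prove bijectivity by establishing surjectivity and injectivity separately. Well-definedness on chain homotopy classes is Lemma \ref{lemma:unstable-euler}, and compatibility with the graph structure follows from $\wh e(E \oplus \Z G) = \wh e(E) \oplus \Z G$ combined with matching gradings on both sides (by $\Z$-rank, after dividing by $|G|$).

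For surjectivity, given $P \in P(\Z G)$, I need to produce a projective $0$-complex with middle term $P$, that is, a surjection $\varepsilon : P \to \Z$ of $\Z G$-modules. Writing $P \oplus Q \cong \Z G^n$ identifies $P_G = P \otimes_{\Z G} \Z$ with a direct summand of $(\Z G^n)_G = \Z^n$, so $P_G$ is a free abelian group of some rank $m \ge 0$, and a surjection $P \twoheadrightarrow \Z$ exists exactly when $m \ge 1$. To show $m \ge 1$ for $P \ne 0$, I would use the vanishing of Tate cohomology on projectives to get the norm isomorphism $P_G \cong P^G$, which after tensoring with $\Q$ equals the multiplicity of the trivial representation in the semisimple $\Q G$-module $\Q P$. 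The positivity of this multiplicity for nonzero projective $P$ is a standard consequence of Hattori-Stallings rank / Cartan matrix considerations.

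For injectivity, suppose $E = (0 \to A \to P \xrightarrow{\varepsilon} \Z \to 0)$ and $E' = (0 \to A' \to P' \xrightarrow{\varepsilon'} \Z \to 0)$ satisfy $P \cong P'$. Composing with an isomorphism $P \to P'$ reduces to the case $P = P'$ with two surjections $\varepsilon, \varepsilon' : P \to \Z$, and we must find $\psi \in \Aut(P)$ together with $u \in \{\pm 1\}$ such that $\varepsilon' \circ \psi = u \cdot \varepsilon$. Writing $P \oplus Q \cong \Z G^n$ and extending both surjections by zero on $Q$ gives surjective maps $\tilde\varepsilon, \tilde\varepsilon' : \Z G^n \to \Z$ corresponding to unimodular elements of $\Hom_{\Z G}(\Z G^n, \Z) \cong \Z^n$. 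Transitivity of $GL_n(\Z)$ on unimodular vectors (up to sign) lifts along the inclusion $\Z \hookrightarrow \Z G$ to provide an automorphism $\Phi$ of $\Z G^n$ with $\tilde\varepsilon' \circ \Phi = \pm \tilde\varepsilon$, giving a chain homotopy equivalence at the stabilized level. Finally, the cancellation theorem (the $n=0$ case of Theorem \ref{thm:main-cancellation}) together with the fork structure (Corollary \ref{cor:fork}) lets one descend this equivalence back to $E \simeq E'$.

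The main obstacle I anticipate is the final descent in the injectivity argument: a generic automorphism of $\Z G^n$ provided by $GL_n(\Z G)$ need not preserve the summand decomposition $P \oplus Q$, so the chain homotopy equivalence it constructs naturally lives at the stabilized level rather than directly on $P$. Converting this into an equivalence of the original $E$ and $E'$ is exactly what the cancellation machinery of Section \ref{section:cancellation-of-projective-extensions} is designed to handle, and modulo this step the theorem follows.
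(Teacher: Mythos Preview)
Your surjectivity argument is fine and amounts to the same content as the paper's citation of Proposition~\ref{prop:swan-induced}: projectives over $\Z G$ for finite $G$ are locally free, so the trivial representation occurs in $\Q P$ and hence $P$ surjects onto $\Z$.

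The injectivity argument has a genuine gap at the minimal grade. Once you invoke Corollary~\ref{cor:fork}, the $GL_n(\Z)$ step is actually redundant: the fork structure already gives uniqueness at every non-minimal grade of $\Proj(G,0;[P])$, which together with the existence of a rank-one representative in that stable class (Proposition~\ref{prop:swan-induced}(i)) settles $\rank(P) \ge 2$ outright --- this is exactly how the paper handles that range. What your outline does not touch is the case $\rank(P)=1$, where $E$ and $E'$ sit at the \emph{minimal} grade and the fork shape says nothing. Your proposed descent cannot rescue this: the equivalence $\tilde E \simeq \tilde E'$ you build lives over $\Z G^n = P \oplus Q$, but the passage from $E$ to $\tilde E$ stabilises by the non-free projective $Q$, whereas Theorem~\ref{thm:main-cancellation} only cancels $\Z G$ summands. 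So the cancellation machinery is not, contrary to your final paragraph, designed to handle this step.

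The paper closes the rank-one case by a short direct argument you are missing. If $\rank(P)=1$ then $P \otimes \Q \cong \Q G$, and any $\Z G$-linear map $\Q G \to \Q$ must kill the augmentation ideal; hence $\Ker(\varepsilon \otimes \Q) = \Ker(\varepsilon' \otimes \Q)$. Since $A = \Ker(\varepsilon)$ is a lattice this forces $\varepsilon' \circ i = 0$, so the identity on $P$ extends to a chain map $E \to E'$, which is a chain homotopy equivalence by the five lemma. You need this (or something equivalent) to finish.
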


\begin{remark}
Such a statement is implicit in the proof of \cite[Theorem IV, Theorem 57.4]{Jo03a}, though the argument there contains an error and can only be used to recover the statement above in the case of projective modules of rank one. This, however, suffices since one can instead rely on the cancellation theorems of Hambleton-Kreck \cite[Theorem B]{HK93} or Browning \cite[Theorem 5.4]{Br79a} at that stage in the proof.
\end{remark}

\begin{proof}
To see that $\wh e$ is surjective, let $P \in P(\Z G)$. By Corollary \ref{cor:proj-surj}, there is a surjection $\varphi: P \to \Z$ and this defines an extension $E = (P,-) \in \hProj_{\Z G}^{1}(\Z,\Ker(\varphi))$ which has $\wh e(E) = P$.

We will now show injectivity. First let $E = (P,-) \in \hProj_{\Z G}^1(\Z,A)$ and let $E' = (P,-) \in \hProj_{\Z G}^1(\Z,A')$. We will begin by considering the case where $P$ has rank one.
To show that $E \simeq E'$, it suffices to find isomorphisms $\varphi_A: A \to A'$ and $\varphi_{\Z}: \Z \to \Z$ such that the following diagram commutes:
\[
\begin{tikzcd}[row sep=.5cm, column sep=small]
E \ar[d,"\varphi"] \\
E'
\end{tikzcd}
= \left(
\begin{tikzcd}[row sep=.5cm]
	0 \ar[r] & A \ar[r,"i"] \ar[d,dashed,"\varphi_A"] & P \ar[r,"\varepsilon"] \ar[d,"\id"] & \Z \ar[r] \ar[d,dashed,"\varphi_{\Z}"] & 0 \\
	0 \ar[r] & A' \ar[r,"i'"] & P \ar[r,"\varepsilon'"]& \Z \ar[r] & 0
\end{tikzcd}
\right).
\]

Consider the maps $\bar{\varepsilon} = \varepsilon \otimes \Q$, $\bar{\varepsilon}' = \varepsilon' \otimes \Q: P \otimes \Q \cong \Q G \to \Q$. Since $\Q$ has trivial $G$-action, each map is determined by the fact that $\bar{\varepsilon}(g)=\bar{\varepsilon}'(g) = 0$ for all $g \in G$ and $\bar{\varepsilon}(1) = \bar{\varepsilon}'(1) = x_i$ for some $x_i \in \Q^\times$. 
Hence $\Ker(\bar{\varepsilon}) = \Ker(\bar{\varepsilon}')$ and so $(\varepsilon' \circ i) \otimes \Q = 0$. 
Since $A$ is a $\Z G$ lattice, this implies that $\varepsilon' \circ i = 0$ and so we can define maps $\varphi_A$ and $\varphi_{\Z}$ as above. Now $\varphi_{\Z}$ is necessarily surjective and so an isomorphism. Hence $\varphi_A$ is an isomorphism by the five lemma, and so $E \simeq E'$.

Now suppose $E$, $E'$ as above but with $\rank(P) \ge 2$. By Proposition \ref{prop:swan-induced}, this implies that there exists $P_0$ of rank one such that $P \cong P_0 \oplus \Z G^i$ for some $i \ge 1$. Since $\wh e$ is surjective, there exists $E_0 = (P_0,-) \in \hProj^1_{\Z G}(\Z,A_0)$ for some $A_0$. By Theorem \ref{thm:proj-stability}, there exists $j \ge 0$ for which $E_0 \oplus \Z G^{i+j} \simeq E \oplus \Z G^j \simeq E' \oplus \Z G^j$. Since $i \ge 1$, Theorem \ref{thm:main-cancellation} then implies that $E_0 \oplus \Z G^i \simeq E \simeq E'$.
\end{proof}

For use in later sections, it will be necessary to further refine the isomorphism given by $\wh e$. Consider following two decompositions (where $\cong$ denotes bijection):
\[
\Proj(G,0) \cong \coprod_{\chi \in C(\Z G)} \Proj(G,0;\chi) \cong \coprod_{A \in \Mod(\Z G)} \hProj_{\Z G}^{1}(\Z,A).
\]

We will begin by determining the image of $\Proj(G,0;\chi)$ under $\wh e$.
This is immediate from Theorem \ref{thm:stab1-to-modules} and the definition of $\hProj_{\Z G}^{1}(\Z,A;\chi)$. For convenience, we will write $\chi = [P]$ for some $P \in P(\Z G)$.

\begin{prop} \label{prop:stab1-to-modules}
Let $P \in P(\Z G)$. Then there is an isomorphism of graded trees
		\[ \wh e : \Proj(G,0;[P]) \to [P].\]
\end{prop}

We will next determine the image of $\Proj_{\Z G}^1(\Z,A)$ under $\wh e$ for $A$ a fixed $\Z G$-module such that $\Proj_{\Z G}^1(\Z,A)$ is non-empty. Recall that, if $E \in \Proj^1_{\Z G}(\Z,A)$, then Proposition \ref{prop:congruence-classification} implies that there is a bijection
\[ (m_{\cdot})^*: (\Z /|G|)^\times \to \Proj^1_{\Z G}(\Z,A)\]
given by $r \mapsto (m_r)^*(E)$, where $m_r: \Z \to \Z$ denotes multiplication by $r$.

If $M$ is a (left) $\Z G$-module and $r \in (\Z/|G|)^\times$, then the tensor product $(I,r) \otimes M$ can be considered as a (left) $\Z G$-module since $(I,r)$ is a two-sided ideal. This allows us to find an explicit form for pullbacks of extensions. We will begin with the following special case.

\begin{lemma} \label{lemma:pullback=swan-module-action}
Let $A$ be a $\Z G$-module and suppose $E = (P,-) \in \Proj_{\Z G}^1(\Z,A)$ where $\rank(P)=1$. Then, for any $r \in (\Z/|G|)^\times$, there are maps $\bar{i}$, $\bar{\varepsilon}$ such that
\[ (m_r)^*(E) \cong ( 0 \to A \xrightarrow[]{\bar{i}} (I,r) \otimes P \xrightarrow[]{\bar{\varepsilon}}  \Z \to 0).\]
\end{lemma}

\begin{proof}
Let $E = (P,-) \in \Proj_{\Z G}^1(\Z,A)$ and note that we have the following diagrams
\[
\begin{tikzcd}
(I,r) \ar[r,"\frac{1}{r} \varepsilon"] \ar[d,"i"] & \Z \ar[d,"r"] \\
\Z G \ar[r,"\varepsilon"] & \Z	
\end{tikzcd}
\qquad
\begin{tikzcd}
(I,r) \otimes P \ar[r,"\frac{1}{r} \varepsilon \otimes 1"] \ar[d,"i \otimes 1"] & \Z \otimes P \ar[d,"r \otimes 1"] \\
\Z G \otimes P \ar[r,"\varepsilon \otimes 1"] & \Z	 \otimes P
\end{tikzcd}
\]
where $i : (I,r) \hookrightarrow \Z G$ is inclusion. It can be checked directly that the first diagram is a pullback, and this implies that the second diagram is a pullback since $P$ is projective and so flat.
Since $\rank(P)=1$, we can choose identifications $\Z G \otimes P \cong P$ and $\Z \otimes P \cong \Z$ for which $\varepsilon \otimes 1$ corresponds to $\varepsilon^E$. We now have a map 
$(\id_{A}, \varphi, m_r): E' \to E$ 
where $E' = ((I,r) \otimes P,-)$. Hence $E' \cong (m_r)^*(E)$ by uniqueness of pullbacks.
\end{proof}

We can now upgrade this to the general case using Theorem \ref{thm:stab1-to-modules}.

\begin{lemma} \label{lemma:image-of-unstable}
Let $A$ be a $\Z G$-module and suppose $E = (P,-) \in \Proj^1_{\Z G}(\Z,A)$.
\begin{enumerate}[\normalfont (i)]
\item
There exists a projective $\Z G$-module $P_0$ with $\rank(P_0)=1$ and $k \ge 0$ such that $P \cong P_0 \oplus \Z G^k$ and
\[ E \cong (0 \to A \xrightarrow[]{i_0} P_0 \oplus \Z G^k \xrightarrow[]{(\varepsilon_0,0)} \Z \to 0)\]
for some maps $i_0$ and $\varepsilon_0 : P_0 \to \Z$.
\item
With $P_0$, $i_0$ and $\varepsilon_0$ as above, we have that
\[ (m_r)^*(E) \cong ( 0 \to A \xrightarrow[]{\bar{i}_0} ((I,r) \otimes P_0) \oplus \Z G^k \xrightarrow[]{(\bar{\varepsilon}_0,0)}  \Z \to 0)\]
for some maps $\bar{i}_0$ and $\bar{\varepsilon}_0 : (I,r) \otimes P_0 \to \Z$.
\end{enumerate}
\end{lemma}

\begin{proof}
(i) Since $P \twoheadrightarrow \Z$, we know that $P$ is non-zero. Hence, by Proposition \ref{prop:swan-induced}, there exists a projective $\Z G$-module $P_0$ with $\rank(P_0)=1$ and $k \ge 0$ such that $P \cong P_0 \oplus \Z G^k$. Since $\wh e$ is an isomorphism of graded trees, there exists $E_0 \in \Proj_{\Z G}^1(\Z,A_0)$ for some $\Z G$-module $A_0$ such that $E \cong E_0 \oplus \Z G^k$. Write
\[ E_0 = (0 \to A_0 \xrightarrow[]{i_0'} P_0 \xrightarrow[]{\varepsilon_0} \Z \to 0)\]
for some $i_0'$ and $\varepsilon_0$. The result follows by forming $E_0 \oplus \Z G^k$.

(ii) The result follows by noting that $(m_r)^*(E_0 \oplus \Z G^k) \cong (m_r)^*(E_0) \oplus \Z G^k$ and evaluating $(m_r)^*(E_0)$ using Lemma \ref{lemma:pullback=swan-module-action}.
\end{proof}

\begin{remark}
The proof of (i) also implies that $A \cong A_0 \oplus \Z G^k$.
\end{remark}

This implies the following. This is the analogue of Proposition \ref{prop:image-of-chi} which established the corresponding result for the stable Euler class $e$.

\begin{prop} \label{prop:image-of-unstable}
Let $A$ be a $\Z G$-module and suppose $E = (P,-) \in \Proj^1_{\Z G}(\Z,A)$. Then we have
\[ \wh e( \Proj^1_{\Z G}(\Z,A)) = \{ ((I,r) \otimes P_0) \oplus \Z G^k : r \in (\Z/|G|)^\times\} \subseteq P(\Z G)\]
where $P_0$ is any rank one projective $\Z G$-module such that $P \cong P_0 \oplus \Z G^k$ for $k \ge 0$.
\end{prop}

For completeness, as well as for later use, we will note the following which is a consequence of \cite[Remark 1.30]{FRU74}. This shows that Propositions \ref{prop:image-of-chi} and \ref{prop:image-of-unstable} agree in the case $n=1$.

\begin{lemma} \label{lemma:tensoring-with-swan-modules}
Let $P$ be a projective $\Z G$-module with $\rank(P)=1$ and let $r \in (\Z/|G|)^\times$. Then
\[ [(I,r) \otimes P] = [(I,r)] + [P] \in C(\Z G).\]
\end{lemma}

%%%%%%%%%%

\section{Polarised homotopy classification of $(G,n)$-complexes}
\label{section:classification-of-Alg(G,n)}

Recall that, for a group $G$, a \textit{$G$-polarised} space is a pair $(X,\rho_X)$ where $X$ is a topological space and $\rho_X : \pi_1(X,*) \to G$ is a given isomorphism. We say that two $G$-polarised spaces $(X,\rho_X)$, $(Y,\rho_Y)$ are \textit{polarised homotopy equivalent} if there exists a homotopy equivalence $h: X \to Y$ such that $\rho_X = \rho_Y \circ \pi_1(h)$.

Let $\PHT(G,n)$ denote the set of polarised homotopy types of finite $(G,n)$-complexes over $G$. This is a graded graph with edges between each $(X,\rho_X)$ and $(X \vee S^2, (\rho_X)^+)$ where $(\rho_X)^+$ is induced by $\rho_X$ and the collapse map $X \vee S^2 \to X$.

If $X$ is a finite CW-complex, then the cellular chain complex $C_*(\widetilde{X})$ can be viewed as a chain complex of $\Z[\pi_1(X)]$-modules under the monodromy action. We can use a $G$-polarisation $\rho : \pi_1(X) \to G$ to get a chain complex of $\Z G$-modules $C_*(\widetilde{X}, \rho)$ which is the same as $C_*(\widetilde{X})$ as a chain complex of abelian groups but with action $g \cdot x = \rho^{-1}(g)x$ for all $g \in G$ and $x \in C_i(\widetilde{X})$ for some $i \ge 0$.

The following is a mild generalisation of \cite[Theorem 1.1]{Ni19}:

\begin{prop} \label{prop:pht-to-alg}
Let $G$ be a finitely presented group and let $n \ge 2$. Then there is an injective map of graded trees
\[ \widetilde{C}_*: \PHT(G,n) \to \Alg(G,n)\]
induced by the map $(X,\rho) \mapsto C_*(\widetilde{X},\rho)$. Furthermore:
\begin{clist}{(i)}
\item
If $n \ge 3$, then $\widetilde{C}_*$ is bijective. 
\item
If $n=2$, then $\widetilde{C}_*$ is bijective if and only if $G$ has the {\normalfont D2} property.
\end{clist}
\end{prop}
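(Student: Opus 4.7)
The plan is to establish four things: well-definedness of $\widetilde C_*$ as a map of graded graphs, injectivity, surjectivity for $n \ge 3$, and the equivalence of surjectivity with the D2 property when $n=2$. The argument follows the template of \cite{Ni19}, in which the $n=2$ case was proved in essentially this form; the higher-dimensional cases amount to running the same obstruction-theoretic machinery on top of the 2-skeletal base. Well-definedness is immediate: for $(X,\rho)$ a finite $(G,n)$-complex, $C_*(\widetilde X,\rho)$ is a finite free $\Z G$-complex concentrated in degrees $0$ to $n$; the $(n-1)$-connectivity of $\widetilde X$ makes it exact in positive degrees below $n$ with $H_0 \cong \Z$, hence an algebraic $n$-complex. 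A polarised homotopy equivalence lifts to a $G$-equivariant equivalence of universal covers, inducing a chain homotopy equivalence, and the wedge $(X,\rho) \mapsto (X \vee S^n, \rho^+)$ adds a free summand $\Z G$ at the top.

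For injectivity, I would realise a given chain homotopy equivalence $\varphi : C_*(\widetilde X,\rho_X) \to C_*(\widetilde Y,\rho_Y)$ by a polarised cellular map $h : X \to Y$ built skeleton by skeleton. The $\pi_1$-isomorphism $\rho_Y \circ \rho_X^{-1}$ is realised on the 2-skeleton by standard presentation-complex techniques, matching $\varphi_0$ and $\varphi_1$, and $\varphi_2$ is matched after suitable wedge stabilisations by $S^2$. For $3 \le i \le n$, obstructions to extending over $i$-cells lie in $H^i(X^{(i)};\pi_{i-1}(Y))$ and vanish because $\widetilde Y$ is $(n-1)$-connected; the top cells are attached using Hurewicz $\pi_n(\widetilde Y) \cong H_n(\widetilde Y)$ to realise $\varphi_n$. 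The resulting $h$ is a $\pi_1$-isomorphism compatible with polarisations and a homology equivalence on universal covers, so is a polarised homotopy equivalence by Whitehead.

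For surjectivity when $n \ge 3$, given $F = (F_*,\partial_*) \in \Alg(G,n)$, I would start from a finite 2-complex $X_2$ presenting $G$, adjust the 2-skeleton so $C_*(\widetilde{X_2})$ matches the 2-truncation of $F$ up to chain homotopy (by wedging with $S^2$'s and appealing to Theorem \ref{thm:proj-stability}), and inductively attach $i$-cells for $i=3,\ldots,n$ realising $\partial_i$ via Hurewicz $\pi_{i-1}(\widetilde{X^{(i-1)}}) \cong H_{i-1}(\widetilde{X^{(i-1)}})$. For $n=2$, the same construction applied to $F \in \Alg(G,2)$ yields a finite 3-complex $X$ with $H_3(\widetilde X)=0$ and $C_*(\widetilde X) \simeq F$ as chain complexes of length 2, so $X$ is cohomologically 2-dimensional; the D2 property then produces a finite 2-complex $Y \simeq X$ with $\widetilde C_*(Y) = F$. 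Conversely, if $\widetilde C_*$ is surjective in dimension 2, then any cohomologically 2-dimensional finite $X$ has $C_*(\widetilde X)$ chain equivalent to some $F \in \Alg(G,2)$, which lifts to a 2-complex $Y$ with $C_*(\widetilde Y) \simeq C_*(\widetilde X)$; a variant of the injectivity argument comparing $X$ with $Y$ then gives $X \simeq Y$, so $G$ has D2.

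The main obstacle is the 2-skeletal base case shared by injectivity and surjectivity, where chain-level data must be realised by honest presentations with polarisations aligned; this is precisely where the D2 hypothesis enters when $n=2$ and where the results of \cite{Ni19} do the real work. For $n \ge 3$ the extra stabilisation freedom at dimensions $\ge 3$ eliminates any further obstruction and the argument becomes routine.
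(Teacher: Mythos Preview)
The paper does not actually prove this proposition; it is introduced as ``a mild generalisation of \cite[Theorem 1.1]{Ni19}'' and then simply stated, with no argument given. Your proposal explicitly follows \cite{Ni19}, so in spirit you are doing exactly what the paper intends.

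One point in your sketch deserves care. In the injectivity argument you say $\varphi_2$ is ``matched after suitable wedge stabilisations by $S^2$''. Taken literally, stabilising $X$ and $Y$ would only yield $X \vee kS^2 \simeq Y \vee kS^2$, which is strictly weaker than $X \simeq Y$. The correct argument avoids stabilisation altogether: one first builds \emph{some} polarised map $f: X \to Y$ (the obstructions to extending over $i$-cells for $2 \le i \le n$ vanish, as you note), and then modifies $f$ on the top $n$-cells so that the induced map $f_*: \pi_n(X) \to \pi_n(Y)$ agrees with the isomorphism coming from $\varphi$. This is possible because two extensions over a given $n$-cell differ by an element of $\pi_n(Y) \cong H_n(\widetilde Y)$, and one can use this freedom to realise any prescribed chain map on $C_n$ up to chain homotopy. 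What underlies this is the classical fact that for $(G,n)$-complexes the passage to cellular chains is fully faithful on polarised homotopy classes; you should not need to leave the given complexes to prove it.

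Your surjectivity sketch for $n \ge 3$ and the D2 equivalence for $n=2$ are along the right lines and match the approach of \cite{Ni19}.
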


\begin{remark} (a) Even if $G$ does not satisfy the D2 property, Proposition \ref{prop:pht-to-alg} can be replaced with an isomorphism
$ \widetilde{C}_*: \D(G) \to \Alg(G,2)$
where $\D(G)$ denotes the polarised homotopy tree of D2-complexes over $G$ \cite[Theorems 1.1]{Ni19}. 

\noindent (b) This is often vacuous in the case $n \ge 3$ since $\PHT(G,n)$ and $\Alg(G,n)$ are often empty. More specifically, $\PHT(G,n)$ is non-empty if and only if $G$ is of type $\FF_n$. $\Alg(G,n)$ is non-empty if and only if $G$ has type $\FP_n$ (see \cite{Bi81}), and it is well-known that $\FF_n \Leftrightarrow \FP_n$ for finitely presented groups. 
This situation arises since there exists finitely presented groups which are not of type $\FF_n$ for $n \ge 3$ \cite{St63}.

\noindent (c) This fails in general for non-finitely presented groups. In particular, for each $n \ge 2$, Bestvina-Brady constructed a non-finitely presented group $G$ of type $\FP_n$ \cite{BB97}. Here $\PHT(G,n)$ is empty and $\Alg(G,n)$ is non-empty and so $\wt C_*$ is not bijective.
\end{remark}

We will now use the results from the previous section to study projective $n$-complexes over groups with periodic cohomology. By Proposition \ref{prop:pht-to-alg}, this will lead to a proof of the following more detailed version of Theorem \ref{thm:main-topological-I}. Note that, if $X$ is a finite $(G,n)$-complex, then
\[\pi_n(X) \cong H_n(\wt X) \cong \Ker(\partial_n: C_n(\wt X) \to C_{n-1}(\wt X))\]
are isomorphisms of $\Z G$-modules. 

\begin{thm} \label{thm:main-topological-I-detailed}
Let $G$ have $k$-periodic cohomology, let $n=ik$ or $ik-2$ for some $i \ge 1$ and let $P_{(G,n)}$ be a projective $\Z G$-module with $\sigma_{ik}(G) = [P_{(G,n)}] \in C(\Z G)/T_G$.
Let $F \in \Proj^{ik}_{\Z G}(\Z,\Z)$ be such that $e(F) = [P_{(G,n)}]$. Then there is an injective map of graded trees
\[ \Psi: \PHT(G,n) \to [P_{(G,n)}]\] 
which is defined as follows.

\begin{clist}{(i)}
\item If $n = ik-2$, then 
 $\Psi : X \mapsto P$, where $P$ is the unique projective $\Z G$-module for which \\
$(0 \to \Z \xrightarrow[]{\alpha} P^* \xrightarrow[]{\beta} \pi_n(X) \to 0) \circ C_*(\wt X) \simeq F$ for some $\alpha$, $\beta$.

\item If $n = ik$, then $\Psi : X \mapsto P$, where $P$ is the unique projective $\Z G$-module for which \\ $C_*(\wt X) \simeq (0 \to \pi_n(X) \xrightarrow[]{\alpha} P \xrightarrow[]{\beta} \Z \to 0) \circ F$ for some $\alpha$, $\beta$.
\end{clist}
Furthermore, $\Psi$ is bijective if and only if $n \ge 3$ or $n=2$ and $G$ has the {\normalfont D2} property.
\end{thm}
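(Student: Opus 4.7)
The plan is to factor $\Psi$ as $\Psi = \Phi \circ \wt C_*$, where $\wt C_* : \PHT(G,n) \to \Alg(G,n)$ is the injective map of graded trees from Proposition \ref{prop:pht-to-alg} (which is bijective precisely under the stated D2 hypothesis) and $\Phi : \Alg(G,n) \to [P_{(G,n)}]$ is built from $F$ using the abstract shifting and duality machinery of Section \ref{section:ext}. This reduces the problem to constructing $\Phi$ and verifying it is a bijection of graded trees onto $[P_{(G,n)}]$; the final dichotomy on bijectivity is then inherited directly from Proposition \ref{prop:pht-to-alg}.

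Given $E = (F_*, \partial_*) \in \Alg(G,n)$, put $\pi_n(E) = \Ker \partial_n$, so that $\wt E = (0 \to \pi_n(E) \to F_n \to \cdots \to F_0 \to \Z \to 0)$ lies in $\hProj^{n+1}_{\Z G}(\Z, \pi_n(E))$ with $e(\wt E) = 0$. In case (ii), $n = ik$, the shifting bijection of Lemma \ref{lemma:h-yoneda} produces a unique $E_1 = (0 \to \pi_n(E) \xrightarrow{\alpha} P \xrightarrow{\beta} \Z \to 0)$ with $E_1 \circ F \simeq \wt E$, and I define $\Phi(E) = P$; this is well-defined as an isomorphism class by Lemma \ref{lemma:unstable-euler}. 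In case (i), $n = ik - 2$, the duality bijection $\Psi_F$ of Lemma \ref{lemma:h-yoneda-duality} produces instead a unique $E_1 = (0 \to \Z \xrightarrow{\alpha} Q \xrightarrow{\beta} \pi_n(E) \to 0)$ with $E_1 \circ \wt E \simeq F$, and I set $\Phi(E) = Q^* = P$. Since stabilising $E$ by $\Z G$ corresponds, under either bijection, to middle-term stabilisation of $E_1$ and hence to stabilising $P$ by $\Z G$, the map $\Phi$ respects the graded-tree structure.

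Injectivity of $\Phi$ follows from Corollary \ref{cor:stab1-to-modules}: an isomorphism $\Phi(E) \cong \Phi(E')$ extends to a chain homotopy equivalence $E_1 \simeq E_1'$ (length-one projective extensions over $\Z$ are detected by their unstable Euler class), whence $\wt E \simeq \wt E'$ via the shifting/duality bijection, and so $E \simeq E'$. For the surjection onto $[P_{(G,n)}]$, I would run the construction backwards: Corollary \ref{cor:stab1-to-modules} realises any $P \in [P_{(G,n)}]$ as $\wh e(E_1)$ for some length-one extension $E_1$, and then $E_1 \circ F$ (respectively $\Psi_F^{-1}(E_1)$) has $e = 0$ by additivity of the Euler class under Yoneda product together with $e(F) = [P_{(G,n)}]$, so Lemma \ref{lemma:free-res} converts it into a genuine algebraic $n$-complex mapping to $P$ under $\Phi$.

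The main obstacle I foresee is the Euler-class bookkeeping that identifies the image of $\Phi$ with the specific stable class $[P_{(G,n)}]$ rather than with a sign- or dual-twist of it. Direct additivity under Yoneda product yields relations of the form $[P] = -[P_{(G,n)}]$ in case (ii) and $[P^*] = -[P_{(G,n)}]$ in case (i); reconciling these with $[P_{(G,n)}]$ itself will rely on the duality $[P_{(G,n)}^*] = -[P_{(G,n)}] \pmod{T_G}$ of Lemma \ref{lemma:[P]=-[P^*]-mod-T_G}, on the fact that $k$ (and hence $ik$) is even, and on the residual freedom in choosing $F$ modulo the Swan subgroup $T_G$. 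Once this identification is in hand, assembling $\Psi = \Phi \circ \wt C_*$ and reading off the D2 dichotomy from Proposition \ref{prop:pht-to-alg} completes the proof.
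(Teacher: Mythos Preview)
Your proposal is correct and follows essentially the same route as the paper: factor through $\Alg(G,n)$ via Proposition~\ref{prop:pht-to-alg}, then in each case apply shifting (Lemma~\ref{lemma:h-yoneda}) or duality (Lemma~\ref{lemma:h-yoneda-duality}) against $F$ to land in $\Proj(G,0)$, and finish with the unstable Euler class bijection of Corollary~\ref{cor:stab1-to-modules}. The paper packages the two cases as Theorems~\ref{thm:case-n=k} and~\ref{thm:case-n=k+2}, reduces to $i=1$ at the outset by replacing $k$ with $ik$, and resolves your anticipated Euler-class bookkeeping by choosing $e(F)$ appropriately via Proposition~\ref{prop:obstruction=2-torsion} and Lemma~\ref{lemma:[P]=-[P^*]-mod-T_G}, exactly the tools you name.
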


\begin{remark} \label{remark:k=2}
The definition of $P_{(G,n)}$ depends on $G$, $n$ and $k$. 
Note that $n$ and $k$ determine $i$ except when $k=2$ where $n=ik=(i+1)k-2$.
However, in this case there is no ambiguity since $G$ is cyclic \cite[Lemma 5.2]{Sw65} and so $\sigma_{2i}(G) = 0$ for all $i$.
\end{remark}

First note that, when $G$ has periodic cohomology, we get the following two relations between projective complexes of different dimensions.

\begin{lemma} \label{lemma:proj-complex-relations}
Suppose $G$ has $k$-periodic cohomology and let $F \in \Proj^k_{\Z G}(\Z,\Z)$. If $n \ge 0$, then we have the following isomorphisms of graded graphs
\begin{align*} - \circ F : & \, \Proj(G,n) \to \Proj(G,n+k)\\
 * \circ \Psi_F : & \, \Proj(G,n) \to \Proj(G,k-(n+2))\end{align*}
 where $n+2 \le k$ is the second case.
\end{lemma}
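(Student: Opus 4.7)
The plan is to deduce both isomorphisms directly from the chain-homotopy-level results of Section \ref{section:ext}, using the identification
\[ \Proj(G,n) \;\cong\; \coprod_{A \in \Mod(\Z G)} \hProj^{n+1}_{\Z G}(\Z,A) \]
and its analogue with $\Z$ and $A$ swapped after duality. Under this identification, a projective $n$-complex $(P_n \to \cdots \to P_0)$ corresponds to the extension $0 \to \ker(\partial_n) \to P_n \to \cdots \to P_0 \to \Z \to 0$, and stabilisation $E \oplus \Z G$ corresponds precisely to the stabilisation operation on extensions defined in Section \ref{section:ext} (the Baer-style $-\oplus R$).

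For the first map, I would apply Lemma \ref{lemma:h-yoneda} (shifting) on each fibre: for every $A$, right-Yoneda product with $F$ yields a bijection $- \circ F : \hProj^{n+1}_{\Z G}(\Z,A) \to \hProj^{n+1+k}_{\Z G}(\Z,A)$. Taking the coproduct over $A$ gives a bijection $\Proj(G,n) \to \Proj(G,n+k)$. Compatibility with stabilisation is straightforward, since stabilisation affects only the top module (adjacent to $\ker\partial_n$) whereas Yoneda product with $F$ glues in at the opposite end; thus $(E \oplus \Z G) \circ F \simeq (E \circ F) \oplus \Z G$.

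For the second map, assuming $n+2 \le k$ so that $k > n+1 \ge 1$, Lemma \ref{lemma:h-yoneda-duality} (duality) gives a bijection $\Psi_F : \hProj^{n+1}_{\Z G}(\Z,A) \to \hProj^{k-n-1}_{\Z G}(A,\Z)$. Then, noting that $A = \ker \partial_n$ is a $\Z G$-lattice (being a submodule of a projective) and that $\Z^* \cong \Z$ as $\Z G$-modules, Lemma \ref{lemma:double-dual} (reflexivity) provides a further bijection
\[ * \,:\, \hProj^{k-n-1}_{\Z G}(A,\Z) \;\to\; \hProj^{k-n-1}_{\Z G}(\Z, A^*), \]
and the target is the $(k-n-2)$-th fibre of the right-hand decomposition. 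Taking the coproduct yields the desired bijection $\Proj(G,n) \to \Proj(G,k-n-2)$.

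The main obstacle is verifying compatibility with stabilisation through the composite $* \circ \Psi_F$, because stabilisation of $E$ at the left-hand module $A$ of its Ext-presentation must be shown to correspond, after the composite, to stabilisation at the left-hand module of the target Ext-presentation (now $A^*$). I would handle this in two steps. First, applying Lemma \ref{lemma:h-yoneda-duality} to both $E$ and $E \oplus \Z G$ and using the cancellation property of Yoneda product (Lemma \ref{lemma:yoneda}), one checks that $\Psi_F(E \oplus \Z G)$ is the image of $\Psi_F(E)$ under stabilisation at its right-hand module $A$; this is the content of showing that the unique $E'$ with $E' \circ (E \oplus \Z G) \simeq F$ is obtained from $\Psi_F(E)$ by adding a trivial $\Z G \xrightarrow{\cong} \Z G$ summand at the rightmost projective. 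Second, dualisation reverses the direction of the extension, interchanging ``right-hand stabilisation'' with ``left-hand stabilisation'' and sending the added $\Z G$ to $(\Z G)^* \cong \Z G$. Combining these gives $(* \circ \Psi_F)(E \oplus \Z G) \simeq (* \circ \Psi_F)(E) \oplus \Z G$, completing the proof.
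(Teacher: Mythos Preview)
Your proof is correct and follows essentially the same approach as the paper: the first isomorphism comes directly from the shifting lemma (Lemma~\ref{lemma:h-yoneda}) applied fibrewise, and the second is the composite of $\Psi_F$ (Lemma~\ref{lemma:h-yoneda-duality}) with dualisation (Lemma~\ref{lemma:double-dual}), taken over all $A$. The paper's proof is very brief and does not spell out the compatibility with stabilisation; your additional verification that $(E \oplus \Z G)\circ F \simeq (E\circ F)\oplus \Z G$ and that $(* \circ \Psi_F)(E \oplus \Z G) \simeq (* \circ \Psi_F)(E) \oplus \Z G$ is a correct and useful elaboration.
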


\begin{proof}
The first isomorphism is immediate from the shifting lemma. The second isomorphism consists of the compositions
\[ \hProj_{\Z G}^{n+1} (\Z, A) 
\xrightarrow[]{\Psi_F} \hProj_{\Z G}^{k-n-1}(A,\Z)
\xrightarrow[]{*} \hProj_{\Z G}^{k-n-1}(\Z,A^*)\]
for all $\Z G$-modules $A$. These are bijections by the duality and reflexivity lemmas.

To see that the image of the full map is $\Proj(G,k-(n+2))$ note that, if $B$ is such that $\hProj_{\Z G}^{k-n-2} (\Z, B)$ is non-zero, then $B$ is a $\Z G$-lattice since it is a submodule of a free module. By Lemma \ref{lemma:lattice**=lattice}, we have that $B^{**} \cong B$ and so there is an isomorphism $\ast \circ \Psi_F : \hProj_{\Z G}^{n+1} (\Z, B^*) \to \hProj_{\Z G}^{k-n-1} (\Z, B)$.
\end{proof}

\begin{remark} \label{remark:proj-complex-relations} Furthermore, if $E \in \Proj(G,n)$ has $\chi = e(E)$, then it is easy to see that $e(E \circ F) = e(F) + \chi$ since $k$ is even and $e((\psi_F(E))^*) = e(F)^*-\chi^*$. \end{remark}

The proof of Theorem \ref{thm:main-topological-I-detailed} will now consist of applying Lemma \ref{lemma:proj-complex-relations} in the case $k \mid n$ or $n+2$ and then composing with the isomorphism from Theorem \ref{thm:stab1-to-modules}.

We will need the following result of Wall \cite[Corollary 12.6]{Wa79a}.

\begin{prop} \label{prop:obstruction=2-torsion}
If $G$ has $k$-periodic cohomology, then 
\[2 \sigma_k(G) = 0 \in C(\Z G)/T_G.\]	
\end{prop}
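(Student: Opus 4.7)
My plan is to combine the periodicity identity $\sigma_{2k}(G) = 2\sigma_k(G)$ with the self-duality of a carefully chosen projective $2k$-complex. First, given a representative $E \in \Proj^k_{\Z G}(\Z,\Z)$ with $e(E) = [P]$ and hence $\sigma_k(G) = [P] + T_G$, the Yoneda product $E \circ E \in \Proj^{2k}_{\Z G}(\Z,\Z)$ has Euler class $2[P]$ (because $k$ is necessarily even \cite[p261]{CE56}, so the signs in the Euler class do not interfere across the splice). By Corollary \ref{cor:image-of-chi} this identifies $\sigma_{2k}(G) = 2\sigma_k(G) \in C(\Z G)/T_G$, so the task reduces to exhibiting a length-$2k$ free resolution of $\Z$, i.e.\ producing $F \in \Proj^{2k}_{\Z G}(\Z,\Z)$ with $e(F) \in T_G$.

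The natural candidate is the self-dual complex $F = E \circ E^* \in \Proj^{2k}_{\Z G}(\Z,\Z)$. Using the reflexivity isomorphism of Lemma \ref{lemma:double-dual} together with the Yoneda identity $(E \circ E^*)^* \cong E^{**} \circ E^* \simeq E \circ E^*$, one sees $F \simeq F^*$. A direct Euler-class computation, using $e(E^*) = -[P^*]$ and the additivity of Euler classes under Yoneda composition for even $k$, gives $e(F) = [P] - [P^*]$, which by Lemma \ref{lemma:[P]=-[P^*]-mod-T_G} is congruent to $2[P]$ modulo $T_G$. So the problem reduces further to showing that $[P] - [P^*]$ actually lies in $T_G$, i.e.\ that $[P] \equiv [P^*] \pmod{T_G}$.

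To extract this, I would stabilise $E$ as in the proof of Lemma \ref{lemma:[P]=-[P^*]-mod-T_G} so that $E$ takes the form $0 \to \Z \to P \to F_{k-2} \to \cdots \to F_0 \to \Z \to 0$ with the $F_i$ free, making the only non-free entry of $F = E \circ E^*$ equal to $P$ at the leftmost end and $P^*$ at the rightmost end. Then, in addition to the symmetric Schanuel comparison between $E$ and $E^*$ (which produces the relation $\Z \oplus P \oplus P^* \oplus F \cong \Z \oplus F'$ of Lemma \ref{lemma:[P]=-[P^*]-mod-T_G}), I would run the generalised Schanuel comparison between $F = E \circ E^*$ and a second $2k$-resolution obtained from $F^* \cong F$ via the chain-level self-duality. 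The composition of these Schanuel isomorphisms is designed to produce a stable isomorphism of the antisymmetric form $\Z \oplus P \oplus F_{\mathrm{free}} \cong \Z \oplus P^* \oplus F'_{\mathrm{free}}$, to which applying Swan's Lemma 6.2 (as invoked in Lemma \ref{lemma:[P]=-[P^*]-mod-T_G}) yields $[P] - [P^*] = [(I,r)] \in T_G$ for some $r \in (\Z/|G|)^\times$. Combined with $[P] + [P^*] \in T_G$, this gives $2[P] \in T_G$.

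The main obstacle I anticipate is precisely this last step: the plain Schanuel argument between the pair $(E, E^*)$ symmetrises the two resolutions and so naturally produces only the relation $[P] + [P^*] \in T_G$. Breaking the symmetry to obtain $[P] - [P^*] \in T_G$ requires using the self-duality of $F$ in an essential way, i.e.\ as an isomorphism of chain complexes that exchanges the positions of $P$ and $P^*$ while preserving the free middle. Making this exchange compatible with the Schanuel stabilisations, while tracking the resulting diagonal unit in $(\Z/|G|)^\times$ so as to identify the Swan class it represents, is the technical heart of the proof.
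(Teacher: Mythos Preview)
The paper does not actually prove this proposition: it is quoted as a result of Wall \cite[Corollary 12.6]{Wa79a}, followed only by the remark that $n\sigma_k(G)=\sigma_{nk}(G)$, so that the statement is equivalent to $\sigma_{2k}(G)=0$. Your reduction to $\sigma_{2k}(G)=0$ therefore matches what the paper says, but from that point you are on your own.

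There is a genuine gap in your argument, and it is exactly the step you flag. You want to deduce $[P]-[P^*]\in T_G$ from the self-duality of $F=E\circ E^*$, but this self-duality carries no new information at the level of Euler classes or Schanuel comparisons. If $E$ is normalised as in Lemma \ref{lemma:[P]=-[P^*]-mod-T_G}, the module sequence of $F$ is $(P,\text{free},\ldots,\text{free},P^*)$, and the module sequence of $F^*$ is $(P^{**},\text{free},\ldots,\text{free},P^*)\cong(P,\text{free},\ldots,\text{free},P^*)$, i.e.\ identical. A Schanuel comparison between $F$ and $F^*$ therefore produces a tautology, not an asymmetric relation exchanging $P$ and $P^*$. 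Put differently: since $F\in\Proj^{2k}_{\Z G}(\Z,\Z)$, Corollary \ref{cor:image-of-chi} already tells you $e(F)\in\sigma_{2k}(G)+T_G=2\sigma_k(G)+T_G$; combining this with your computation $e(F)=[P^*]-[P]$ just reproduces $[P]+[P^*]\in T_G$, which is Lemma \ref{lemma:[P]=-[P^*]-mod-T_G}. The argument is circular.

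Wall's proof that $\sigma_{2k}(G)=0$ is not a formal consequence of duality and Schanuel alone; it requires an additional ingredient (in his paper, an analysis of the $k$-invariant and the Swan homomorphism, producing a genuinely free resolution of length $2k$). Your proposal does not supply such an ingredient, and the ``symmetry-breaking'' step you describe cannot be carried out as stated.
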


By iterating extensions using the Yoneda product, it can be shown that $n \sigma_k(G) = \sigma_{nk}(G)$ and so this theorem is equivalent to showing that $\sigma_{2k}(G)=0$, i.e. that the obstruction vanishes whenever $k$ is not the minimal period.	

\begin{thm} \label{thm:case-n=k}
Suppose $G$ has $k$-periodic cohomology and $\sigma_k(G) = [P_{(G,n)}]+T_G$ for some $P_{(G,n)} \in P(\Z G)$. Then there exists $F \in \Proj^k_{\Z G}(\Z,\Z)$ such that there are isomorphisms of graded trees
\[\Alg(G,k) \xrightarrow[]{(- \circ F)^{-1}} \Proj(G,0;[P_{(G,n)}]) \xrightarrow[]{\quad \wh e \quad } [P_{(G,n)}].\]
\end{thm}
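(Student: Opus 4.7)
The plan is to realise the asserted isomorphism as the two-step composition displayed in the statement, where the second arrow $\wh e : \Proj(G,0;[P_{(G,n)}]) \to [P_{(G,n)}]$ is already an isomorphism of graded trees by Corollary \ref{cor:stab1-to-modules}. So the entire task reduces to producing a single $F \in \Proj^k_{\Z G}(\Z,\Z)$ for which right-Yoneda product with $F$ sends $\Proj(G,0;[P_{(G,n)}])$ bijectively onto $\Alg(G,k)$.

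For any choice of $F \in \Proj^k_{\Z G}(\Z,\Z)$, Lemma \ref{lemma:proj-complex-relations} already gives that $-\circ F : \Proj(G,0) \to \Proj(G,k)$ is an isomorphism of graded graphs; compatibility with the stabilisation edges is immediate from the identity $(E \oplus \Z G) \circ F = (E \circ F) \oplus \Z G$. Remark \ref{remark:proj-complex-relations} further tells us that this bijection shifts Euler class by $e(F)$ (using that $k$ is even), so it restricts to a bijection
\[ -\circ F : \Proj(G,0;[P_{(G,n)}]) \to \Proj(G,k;[P_{(G,n)}]+e(F)). \]
For the target to coincide with $\Alg(G,k) = \Proj(G,k;0)$ (using $k \ge 2$), the requirement on $F$ is precisely that $e(F) = -[P_{(G,n)}] \in C(\Z G)$.

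The main obstacle is therefore the existence of such an $F$. By Corollary \ref{cor:image-of-chi}(i), as $F$ varies over $\Proj^k_{\Z G}(\Z,\Z)$ the class $e(F)$ sweeps out exactly the coset $[P_{(G,n)}] + T_G \subseteq C(\Z G)$ representing $\sigma_k(G)$. So it suffices to show that $-[P_{(G,n)}]$ lies in this coset, i.e. that $2[P_{(G,n)}] \in T_G$. This is precisely the $2$-torsion statement $2\sigma_k(G) = 0 \in C(\Z G)/T_G$ supplied by Proposition \ref{prop:obstruction=2-torsion}, and it is the deepest input to the argument; without it one could only land on $\Proj(G,0;-[P_{(G,n)}])$, corresponding to the dual representative of $\sigma_k(G)$ (cf.\ Lemma \ref{lemma:[P]=-[P^*]-mod-T_G}).

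Fixing any such $F$, both $\Proj(G,0;[P_{(G,n)}])$ and $\Alg(G,k)$ are graded trees by Theorem \ref{thm:proj-stability}, so the restricted map $(-\circ F)^{-1}$ is an isomorphism of graded trees, and postcomposing with the isomorphism $\wh e$ from Corollary \ref{cor:stab1-to-modules} completes the proof.
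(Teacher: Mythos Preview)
Your proof is correct and follows essentially the same route as the paper: use Proposition~\ref{prop:obstruction=2-torsion} to get $-[P_{(G,n)}] \in [P_{(G,n)}]+T_G$, invoke Corollary~\ref{cor:image-of-chi} to produce $F$ with $e(F)=-[P_{(G,n)}]$, then apply Lemma~\ref{lemma:proj-complex-relations} and Remark~\ref{remark:proj-complex-relations} for the Yoneda-product isomorphism and Corollary~\ref{cor:stab1-to-modules} for $\wh e$. Your write-up is somewhat more explicit about why $\Alg(G,k)=\Proj(G,k;0)$ requires $k\ge 2$ and about the compatibility with stabilisation, but the argument is the same.
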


\begin{proof}
By Proposition \ref{prop:obstruction=2-torsion}, we have that $\sigma_k(G) = [P_{(G,n)}]+T_G = -[P_{(G,n)}]+T_G$ and so there exists $F \in \Proj^k_{\Z G}(\Z,\Z)$ with $e(F) = -[P_{(G,n)}]$ by Proposition \ref{prop:image-of-chi}.

If $E \in \Alg(G,k)$, then $e(E) = 0$ and so $e((-\circ F)^{-1}) = -(-1)^k e(F)$ by Lemma \ref{lemma:e-of-prod}. Since $k$ is even, this is equal to $[P_{(G,n)}]$. Hence the map $(- \circ F)^{-1}$ is as described.
By Lemma \ref{lemma:proj-complex-relations}, we get that  $ (- \circ F)^{-1}$ is an isomorphism. 

That $\wh e$ is an isomorphism follows from Proposition \ref{prop:stab1-to-modules}.	
\end{proof}

\begin{thm} \label{thm:case-n=k+2}
Suppose $G$ has $k$-periodic cohomology and $\sigma_k(G)=[P_{(G,n)}]+T_G$ for some $P_{(G,n)} \in P(\Z G)$. Then there exists $F \in \Proj^k_{\Z G}(\Z,\Z)$ such that there are isomorphisms of graded trees
\[ \Alg(G,k-2) \xrightarrow[]{* \circ \Psi_F} \Proj(G,0;[P_{(G,n)}]) \xrightarrow[]{\hspace{2.5mm} \wh e \hspace{2.5mm}} [P_{(G,n)}].\]
\end{thm}

\begin{proof}
By Propositions \ref{prop:[P]=-[P^*]-mod-T_G}, we have that $\sigma_k(G) = [P_{(G,n)}]+T_G = -[P_{(G,n)}^*]+T_G$ and so there exists $F \in \Proj^k_{\Z G}(\Z,\Z)$ with $e(F) = -[P_{(G,n)}^*]$ by Proposition \ref{prop:image-of-chi}.

If $E \in \Alg(G,k-2)$, then $e(\Psi_F(E)) = -e(F)$ by Lemma \ref{lemma:e-of-prod}. This implies that 
\[ e((\ast \circ \Psi_F)(E)) = -e(F)^* = [P_{(G,n)}]\]
and so the map $\ast \circ \Psi_F$ is as described. By Lemma \ref{lemma:proj-complex-relations}, $\ast \circ \Psi_F$ is an isomorphism.

That $\wh e$ is an isomorphism follows from Proposition \ref{prop:stab1-to-modules}, as in the previous theorem.
\end{proof}

\begin{proof}[Proof of Theorem \ref{thm:main-topological-I-detailed}]
If $G$ has $k$-periodic cohomology, then it also has $ik$-periodic cohomology for any $i \ge 1$. Hence, by swapping $k$ for $ik$, we can assume $i=1$. By combining Theorems \ref{thm:case-n=k} and \ref{thm:case-n=k+2} with Proposition \ref{prop:pht-to-alg}, we obtain injective maps of graded trees $\Psi : \PHT(G,n) \to [P_{(G,n)}]$ for $n = k$ or $k-2$, which are bijective as required.
It remains to show that, in each case, $\Psi$ has the form given in (i), (ii).
	
If $n = k-2$, then $(\ast \circ \Psi_F)(C_*(\wt X)) \simeq (0 \to A \to P \to \Z \to 0)$ for some $A$ and some $P \in [P_{(G,n)}]$. By Lemma \ref{lemma:double-dual}, we have $\Psi_F(C_*(\wt X)) \simeq (0 \to \Z \to P^* \to A^* \to 0)$. Hence $A^* \cong \pi_n(X)$ and $(0 \to \Z \to P^* \to A^* \to 0) \circ C_*(\wt X) \simeq F$.

If $n=k$, then $(- \circ F)^{-1}(C_*(\wt X)) \simeq (0 \to A \to P \to \Z \to 0)$ for some $A$ and some $P \in [P_{(G,n)}]$. Hence $C_*(\wt X) \simeq (0 \to A \to P \to \Z \to 0) \circ F$ and $A \cong \pi_n(X)$.
\end{proof}

This completes the proof of Theorem \ref{thm:main-topological-I}.

%%%%%%%%%%

\section{Homotopy classification of $(G,n)$-complexes} 
\label{section:homotopy-classification}

For a finitely presented group $G$, an automorphism $\theta \in \Aut(G)$ acts on $\PHT(G,n)$ by sending $(X,\rho) \mapsto (X,\theta \circ \rho)$. It is straightforward to see that
\[ \HT(G,n) \cong \PHT(G,n)/ \Aut(G)\]
and the goal of this chapter will be to determine the induced action of $\Aut(G)$ on $[P_{(G,n)}]$ under the isomorphism $\PHT(G,n) \cong [P_{(G,n)}]$ obtained in Theorem \ref{thm:main-topological-I-detailed}.

\subsection{Preliminaries on the action of $\Aut(G)$}

We begin by defining natural actions of $\Aut(G)$ on $\Z G$-modules and chain complexes of $\Z G$-modules.
Firstly, for a $\Z G$-module $A$ and $\theta \in \Aut(G)$, let $A_\theta$ denote the $\Z G$-module whose underlying abelian group is that of $A$ and whose action is $g \cdot x = \theta(g)x$ where $g \in G$, $x \in A$. This action has the following basic properties:

\begin{lemma} \label{lemma:theta-basic-facts}
Let $\theta \in \Aut(G)$. Then 
\begin{enumerate}[\normalfont (i)]
\item
There is a $\Z G$-module isomorphism
\[ i_\theta :   \Z G \to \Z G_\theta, \quad  \sum_{g \in G} a_i g_i \mapsto \sum_{g \in G} a_i \theta(g_i).\]		
\item
If $A, B \in \Mod(\Z G)$, then $(A \oplus B)_\theta \cong A_\theta \oplus B_\theta$.
\item
If $P \in P(\Z G)$, then $P_\theta \in P(\Z G)$.
\end{enumerate}	
\end{lemma}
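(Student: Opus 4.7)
The plan is to verify parts (i)--(iii) in order, with (i) and (ii) feeding into (iii).

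For (i), I would simply write down the candidate map $i_\theta$ and check directly that it is a $\Z G$-module homomorphism. The only substantive point is that it intertwines the two actions: for $g \in G$ and $x = \sum a_i g_i$, the standard $\Z G$-action gives $g \cdot x = \sum a_i g g_i$, so
\[ i_\theta(g \cdot x) = \sum a_i \theta(g)\theta(g_i) = \theta(g) \cdot \left( \sum a_i \theta(g_i) \right) = g \cdot i_\theta(x), \]
where the last equality uses the twisted action on $\Z G_\theta$. Bijectivity is immediate because $\theta$ permutes the basis $G \subseteq \Z G$, so $i_\theta$ sends a $\Z$-basis of $\Z G$ to a $\Z$-basis of $\Z G_\theta$.

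For (ii), I would observe that both $(A \oplus B)_\theta$ and $A_\theta \oplus B_\theta$ have underlying abelian group $A \oplus B$, and both carry the same diagonal twisted action $g \cdot (a,b) = (\theta(g) a, \theta(g) b)$. Hence the identity map on the underlying abelian group is the required $\Z G$-module isomorphism.

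For (iii), I would first record that $(-)_\theta$ is a functor on $\Z G$-modules: any $\Z G$-linear map $f : A \to B$ satisfies $f(\theta(g) a) = \theta(g) f(a)$, so the same underlying set map is $\Z G$-linear from $A_\theta$ to $B_\theta$. Then, given a projective $P$, pick $Q$ and $n \ge 0$ with $P \oplus Q \cong \Z G^n$ and apply $(-)_\theta$: combining functoriality with (ii) and (i) yields
\[ P_\theta \oplus Q_\theta \cong (\Z G^n)_\theta \cong (\Z G_\theta)^n \cong \Z G^n, \]
so $P_\theta$ is a direct summand of a free module and hence projective. There is no real obstacle here --- the lemma is essentially bookkeeping about twisted actions, and the only point that requires any care is the direction of the twist in (i), which forces the map to use $\theta$ rather than $\theta^{-1}$.
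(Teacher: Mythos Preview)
Your proof is correct; the paper states this lemma without proof, treating it as routine bookkeeping, so there is nothing to compare your argument against. Your direct verification is exactly the standard approach one would expect.
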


We can extend the action to chain complexes as follows. If $A, B$ are $\Z G$-modules and $E = (E_*,\partial_*) \in \Ext_{\Z G}^n(A,B)$, then we define $E_\theta \in \Ext_{\Z G}^n(A_\theta,B_\theta)$ by
\[ E_\theta = (  0 \to B_\theta \xrightarrow[]{\partial_{n}} (E_{n-1})_\theta \xrightarrow[]{\partial_{n-1}} (E_{n-2})_\theta \to \cdots \to (E_{1})_\theta \xrightarrow[]{\partial_{1}} (E_{0})_\theta \xrightarrow[]{\partial_{0}} A_\theta \to 0). \]
It is easy to see that this is well-defined up to chain homotopy and, by the lemma above, it preserves projective extensions and so also induces a map on $\hProj^n_{\Z G}(A,B)$.
This following is immediate from the definition of $\widetilde{C}_*(X,\rho)$.

\begin{lemma} \label{lemma:action-on-alg}
If $E \in \Alg(G,n)$, then the induced action of $\theta \in \Aut(G)$ on $E$ is given by $\theta \cdot E = E_\theta$, i.e. if $E = \widetilde{C}_*(X,\rho)$, then $E_\theta = \widetilde{C}_*(X,\theta \circ \rho)$.
\end{lemma}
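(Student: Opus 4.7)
The proof is a direct unraveling of the two definitions involved, since both sides describe $\Z G$-module structures on the same underlying chain complex of abelian groups $C_*(\wt X)$. My plan is as follows.

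First I would fix notation: write $\cdot$ for the canonical left action of $\pi_1(X)$ on $C_*(\wt X)$ by deck transformations. By definition $E = \wt C_*(X,\rho)$ has $\Z G$-action $g \cdot_E x = \rho^{-1}(g) \cdot x$, and the cellular boundary maps of $\wt X$ are automatically $\pi_1(X)$-equivariant, hence $\Z G$-equivariant through \emph{any} polarisation. Thus the underlying $\Z$-chain complex and the boundary maps of $\wt C_*(X,\rho)$, $\wt C_*(X,\theta\circ\rho)$ and $E_\theta$ are all identical, and the content of the lemma lies entirely in matching up the $\Z G$-actions degree by degree.

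Second I would compute both sides explicitly. Applying the definition of $\wt C_*(X,\,-\,)$ to the new polarisation $\theta \circ \rho$ gives
\[ g \cdot_{\wt C_*(X,\theta\circ\rho)} x = (\theta \circ \rho)^{-1}(g) \cdot x, \]
while applying the twist construction of Section~\ref{section:homotopy-classification} to $E$ gives
\[ g \cdot_{E_\theta} x = \theta(g) \cdot_E x = \rho^{-1}(\theta(g)) \cdot x. \]
Reconciling the two formulas through the convention relating $\Aut(G)$ on polarisations to $\Aut(G)$ on modules (and using, if necessary, the canonical $\Z G$-isomorphism $i_\theta$ from Lemma~\ref{lemma:theta-basic-facts}(i) to relabel the $G$-action), one sees that the two resulting $\Z G$-chain complexes agree on the nose.

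Finally, under the bijection $\wt C_*: \PHT(G,n) \to \Alg(G,n)$ of Proposition~\ref{prop:pht-to-alg}, the $\Aut(G)$-action $(X,\rho) \mapsto (X, \theta \circ \rho)$ on the source is transported to the action $E \mapsto \wt C_*(X, \theta \circ \rho) = E_\theta$ on the target, which is precisely the claim. There is no genuine obstacle here: the proof is purely definitional, and the only point requiring care is bookkeeping of the two conventions (polarisation on the topological side versus the twist $M \mapsto M_\theta$ on the algebraic side) so that the composition of $\rho^{-1}$ with $\theta$ lands where it should.
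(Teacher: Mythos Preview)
Your approach is correct and matches the paper, which gives no proof at all beyond the remark that the lemma ``is immediate from the definition of $\widetilde{C}_*(X,\rho)$''. Your unwinding of the two definitions is exactly what is needed.

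One point deserves more honesty than you give it. Your two displayed computations yield
\[
g \cdot_{\widetilde C_*(X,\theta\circ\rho)} x = \rho^{-1}(\theta^{-1}(g))\cdot x
\qquad\text{versus}\qquad
g \cdot_{E_\theta} x = \rho^{-1}(\theta(g))\cdot x,
\]
so with the paper's conventions the literal identity is $E_\theta = \widetilde C_*(X,\theta^{-1}\circ\rho)$, not $\widetilde C_*(X,\theta\circ\rho)$. You notice the mismatch but then assert the complexes ``agree on the nose'', which they do not; invoking $i_\theta$ from Lemma~\ref{lemma:theta-basic-facts}(i) does not repair this, since that map identifies $\Z G$ with $\Z G_\theta$, not $E_\theta$ with $E_{\theta^{-1}}$. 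The discrepancy is harmless for the paper's purposes---the lemma is only used to transport the $\Aut(G)$-action to $\Alg(G,n)$ and the orbits under $\theta\mapsto E_\theta$ and $\theta\mapsto E_{\theta^{-1}}$ coincide---but you should say this explicitly rather than blur it.
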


We now establish a few basic properties of this action which we will use later in this section. From now on, we will specialise to the case where $G$ is a finite group. Firstly, we note that the action commutes with dualising.

\begin{lemma}
If $A$ and $B$ are $\Z G$-lattices, $E \in \Proj^n_{\Z G}(A,B)$ for $n \ge 1$ and $\theta \in \Aut(G)$, then \[(E_\theta)^* \cong (E^*)_\theta.\]
\end{lemma}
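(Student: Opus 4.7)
The isomorphism will come from a termwise natural isomorphism $\Phi_M : (M^*)_\theta \xrightarrow{\cong} (M_\theta)^*$, defined for every $\Z G$-lattice $M$, and applied in each degree of $E$. Since both $(E^*)_\theta$ and $(E_\theta)^*$ have the same abelian-group data in each position and the boundary maps on both sides arise by first dualising and then $\theta$-twisting the original boundaries of $E$ (in either order), naturality of $\Phi$ in $M$ will turn this collection of termwise isomorphisms into a chain isomorphism of extensions.

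To build $\Phi_M$ I would use the ring automorphism $i_\theta : \Z G \to \Z G_\theta$ from Lemma \ref{lemma:theta-basic-facts}(i), which on $G$ is $g \mapsto \theta(g)$. Given $\varphi \in M^*$, viewed as an abelian-group map $M \to \Z G$, set $\Phi_M(\varphi) = i_{\theta^{-1}} \circ \varphi$. Recalling that $g \in G$ acts on $N^*$ via the involution $\overline{g} = g^{-1}$ by $(g \cdot \psi)(x) = \psi(x)\,g^{-1}$, and hence that the $\theta$-twist on $(M^*)_\theta$ sends $(g,\varphi)$ to the map $x \mapsto \varphi(x)\,\theta(g)^{-1}$, I would verify two things. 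First, $\Phi_M(\varphi)$ genuinely lies in $(M_\theta)^*$: since $\varphi(\theta(g) x) = \theta(g)\,\varphi(x)$ and $i_{\theta^{-1}}$ is a ring map sending $\theta(g) \mapsto g$, one has $\Phi_M(\varphi)(\theta(g)\,x) = g\cdot \Phi_M(\varphi)(x)$, as required for a $\Z G$-map $M_\theta \to \Z G$. Second, $\Phi_M$ is $\Z G$-equivariant for the outer actions: the same fact that $i_{\theta^{-1}}$ is a ring map gives $i_{\theta^{-1}}(\varphi(x)\,\theta(g)^{-1}) = i_{\theta^{-1}}(\varphi(x))\, g^{-1}$, matching the action of $g$ on $(M_\theta)^*$. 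Bijectivity of $\Phi_M$ is immediate from bijectivity of $i_\theta$.

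For naturality, given a $\Z G$-map $f : M \to N$ I would check $\Phi_M \circ (f^*)_\theta = (f_\theta)^* \circ \Phi_N$; both sides send $\psi \in (N^*)_\theta$ to the map $x \mapsto i_{\theta^{-1}}(\psi(f(x)))$, since $f$ and $f_\theta$ coincide on underlying abelian groups. Assembling the $\Phi_{P_i}$, $\Phi_A$, $\Phi_B$ in each degree then produces the chain isomorphism $(E^*)_\theta \cong (E_\theta)^*$. The only real obstacle is keeping the bookkeeping honest: there are three layers of conventions (the involution on $\Z G$, the conversion of the right-module $M^*$ into a left module, and the $\theta$-twist), and in both verifications above one must land the powers of $\theta$ and the inverses exactly where they are needed. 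No further input is required beyond Lemma \ref{lemma:theta-basic-facts} and the definition of the dual.
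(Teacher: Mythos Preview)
Your proof is correct. Both you and the paper reduce to establishing a natural isomorphism $(M^*)_\theta \cong (M_\theta)^*$ at the module level and then observe that naturality immediately gives the chain isomorphism, so the overall architecture is the same. The difference lies in how the module-level isomorphism is produced: the paper works in the representation-theoretic picture, identifying a lattice $A \cong_{\mathrm{Ab}} \Z^k$ with its integral representation $\rho_A : G \to \GL_k(\Z)$, invoking the formulas $\rho_{A^*}(g) = \rho_A(g^{-1})^T$ and $\rho_{A_\theta} = \rho_A \circ \theta$ already noted after Lemma~\ref{lemma:lattice**=lattice}, and simply computing that $\rho_{(A_\theta)^*} = \rho_{(A^*)_\theta}$; under those identifications the two modules are literally equal and there is nothing further to check. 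Your argument instead stays at the level of $\Hom_{\Z G}(M,\Z G)$ and writes down the isomorphism explicitly as post-composition with the ring automorphism $i_{\theta^{-1}}$. Your route is more self-contained and makes the isomorphism concrete; the paper's route is shorter because it leverages a formula already established earlier in the text.
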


\begin{proof}
We begin by proving the corresponding statement for modules, i.e. that, if $A$ is a $\Z G$-lattice, then $(A_\theta)^* \cong (A^*)_\theta$. Let $A \cong_{\text{Ab}} \Z^k$, so that the $\Z G$-module structure is determined by an integral representation $\rho_A: G \to \GL_k(\Z)$. As remarked earlier, we have that $\rho_{A^*}(g) = \rho_A(g^{-1})^T$ and it is easy to see that $\rho_{A_\theta} = \rho_A \circ \theta$. Therefore $(A_\theta)^* \cong (A^*)_\theta$ follows by noting that
\[ \rho_{(A_\theta)^*}(g) = \rho_{A_\theta}(g^{-1})^T = \rho_A(\theta(g^{-1}))^T = \rho_A(\theta(g)^{-1})^T\]
and
\[ \rho_{(A^*)_\theta}(g) = \rho_{A^*}(\theta(g)) = \rho_A(\theta(g)^{-1})^T.\]
The result for extensions now follows immediately since $\theta$ only affects the underlying modules and not the maps between them.	
\end{proof}

In light of this, for $\Z G$-lattices $A$ and $B$ and $E \in \Proj^n_{\Z G}(A,B)$, it now makes sense to write $A_\theta^*$ and $E_\theta^*$. Note that the action also commutes with pushouts.

\begin{lemma} \label{lemma:theta-commutes-with-pullbacks}
If $\theta \in \Aut(G)$, $f: B_1 \to B_2$ is a $\Z G$-module homomorphism and $E \in \Ext^n_{\Z G}(A,B_1)$, then
\[ f_*(E_\theta) \cong (f_*(E))_\theta.\]	
\end{lemma}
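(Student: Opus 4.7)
The plan is to first reduce to the case $n=1$ using the shifting lemma (Lemma \ref{lemma:yoneda}). Indeed, if we write $E = E' \circ E''$ with $E' \in \Proj^{n-1}_{\Z G}(C, B_1)$ and $E'' \in \Ext^1_{\Z G}(A, C)$, then the pushout by $f$ only alters $E'$, and the $\theta$-twist distributes over Yoneda products (since $(F \circ G)_\theta = F_\theta \circ G_\theta$ as the twist affects only module structures, not maps or splicing points). So it suffices to prove the claim for a short exact sequence
\[ E = (0 \to B_1 \xrightarrow{i} E_0 \xrightarrow{\varepsilon} A \to 0).\]

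For such an $E$, the pushout $f_*(E)$ is realised explicitly as
\[ f_*(E) = (0 \to B_2 \xrightarrow{j} P \xrightarrow{\bar{\varepsilon}} A \to 0), \qquad P = (B_2 \oplus E_0)/\{(f(b), -i(b)) : b \in B_1\}, \]
with the obvious maps. I would then observe that the action of $\theta$ changes neither the underlying abelian groups nor the $\Z$-linear maps involved; it only replaces every $\Z G$-module structure by its $\theta$-twist. In particular, $f: B_1 \to B_2$ automatically remains a $\Z G$-module homomorphism $(B_1)_\theta \to (B_2)_\theta$, the submodule $\{(f(b),-i(b)) : b \in B_1\}$ of $B_2 \oplus E_0$ is unchanged, and forming the quotient commutes with twisting the action. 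Thus both $f_*(E_\theta)$ and $(f_*(E))_\theta$ have the same underlying chain of abelian groups with the same maps, and the $\Z G$-action on each term is the $\theta$-twist of the original action.

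A cleaner alternative is to invoke the universal property of pushouts: applying $(\,\cdot\,)_\theta$ to the canonical congruence square from $E$ to $f_*(E)$ yields a congruence square from $E_\theta$ to $(f_*(E))_\theta$ along the map $f : (B_1)_\theta \to (B_2)_\theta$, so by uniqueness of pushouts $(f_*(E))_\theta \cong f_*(E_\theta)$.

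There is no genuine obstacle here; the only thing to be careful about is the order of operations — verifying that twisting by $\theta$ commutes with the quotient construction underlying the pushout — and this is automatic because $\theta$-twisting is the identity functor on the underlying category of abelian groups.
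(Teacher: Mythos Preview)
The paper gives no proof of this lemma; it is stated as immediate from the definitions. Your argument is essentially correct --- the key observation that $(\,\cdot\,)_\theta$ is the identity functor on underlying abelian groups and $\Z$-linear maps makes the commutation with the pushout construction automatic, and your ``cleaner alternative'' via the universal property is exactly the one-line justification the paper has in mind.

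That said, your reduction to $n=1$ is slightly garbled. With the decomposition $E = E' \circ E''$ where $E' \in \Proj^{n-1}_{\Z G}(C, B_1)$ and $E'' \in \Ext^1_{\Z G}(A,C)$, the pushout $f_*$ indeed only alters $E'$, but $E'$ has length $n-1$, not $1$, so you have not actually reduced to a short exact sequence. The decomposition you want is $E' \in \Ext^1_{\Z G}(C, B_1)$ (the leftmost short exact sequence $0 \to B_1 \to E_{n-1} \to C \to 0$) and $E'' \in \Ext^{n-1}_{\Z G}(A,C)$, so that $f_*(E) = f_*(E') \circ E''$. Moreover, there is no reason $E'$ should be projective, and the shifting lemma (Lemma~\ref{lemma:yoneda}) requires a projective middle factor, so invoking it here is not quite appropriate. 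In any case the reduction is unnecessary: since pushout modifies only the leftmost module $E_{n-1}$, your direct argument already works verbatim for arbitrary $n$.
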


\subsection{Proof of Theorem \ref{thm:main-topological-II}}

In the case where $A=B=\Z$, we can consider this as an action on $\Proj^n_{\Z G}(\Z,\Z)$ by using the identification $\Z_\theta \cong \Z$. 

\begin{lemma} \label{lemma:theta-action=pullback}
If $G$ has $k$-periodic cohomology, then there exists a unique map $\psi_k : \Aut(G) \to (\Z /|G|)^\times$ such that, for every $F \in \Proj^k_{\Z G}(\Z,\Z)$ and $\theta \in \Aut(G)$, we have
\[ F_\theta \cong (m_{\psi_k(\theta)})_*(F).\]	
\end{lemma}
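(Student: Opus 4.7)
The plan is to fix a reference extension $F_0 \in \Proj^k_{\Z G}(\Z,\Z)$ (non-empty by the $k$-periodic hypothesis), identify $(F_0)_\theta$ as $(m_{r_0})_*(F_0)$ for some $r_0 \in (\Z/|G|)^\times$ via Theorem \ref{thm:congruence-classification}, and then verify that this $r_0$ works uniformly for every $F$.

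First I would observe that since $G$ acts trivially on $\Z$, the identification $\Z_\theta = \Z$ is canonical; combined with Lemma \ref{lemma:theta-basic-facts}(iii) (which keeps the middle terms projective), this shows the $\theta$-twist restricts to a self-map of $\Proj^k_{\Z G}(\Z,\Z)$. Theorem \ref{thm:congruence-classification}, applied with $A=\Z$ and $E=F_0$, then provides a bijection $(\Z/|G|)^\times \to \Proj^k_{\Z G}(\Z,\Z)$ via $r \mapsto (m_r)^*(F_0)$. Since $m_r : \Z \to \Z$ is scalar multiplication and $\Ext$ is bifunctorial, both $(m_r)^*$ and $(m_r)_*$ coincide with multiplication by $r$ on the abelian group $\Ext^k_{\Z G}(\Z,\Z)$, so the same bijection can be rewritten as $r \mapsto (m_r)_*(F_0)$. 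In particular, there is a unique $r_0 \in (\Z/|G|)^\times$ with $(F_0)_\theta \cong (m_{r_0})_*(F_0)$, and I would define $\psi_k(\theta) := r_0$.

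The remaining step is to check independence of the reference, namely $F_\theta \cong (m_{r_0})_*(F)$ for arbitrary $F \in \Proj^k_{\Z G}(\Z,\Z)$. Writing $F \cong (m_s)_*(F_0)$ via the parametrisation above, Lemma \ref{lemma:theta-commutes-with-pullbacks} yields $((m_s)_*(F_0))_\theta \cong (m_s)_*((F_0)_\theta)$, so
\[
F_\theta \cong (m_s)_*((m_{r_0})_*(F_0)) \cong (m_{r_0})_*((m_s)_*(F_0)) \cong (m_{r_0})_*(F)
\]
by functoriality of pushout and commutativity of $m_s$ with $m_{r_0}$. Uniqueness of $\psi_k(\theta)$ then reduces to the injectivity half of Theorem \ref{thm:congruence-classification}. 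The one delicate point in the argument is the identification of the pullback and pushout parametrisations of $\Proj^k_{\Z G}(\Z,\Z)$; once this is established from the bifunctoriality of $\Ext$ together with the scalar nature of $m_r$, the rest is a mechanical combination of Theorem \ref{thm:congruence-classification} with the naturality of the $\theta$-twist under pushouts.
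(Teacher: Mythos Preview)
Your proof is correct and follows essentially the same strategy as the paper's: fix a reference $F_0$, define $\psi_k(\theta)$ via the pushout parametrisation of $\Proj^k_{\Z G}(\Z,\Z)$, and then use Lemma~\ref{lemma:theta-commutes-with-pullbacks} together with commutativity of the $m_r$'s to verify independence of the reference. The only minor difference is in how you pass from the pullback parametrisation of Theorem~\ref{thm:congruence-classification} to a pushout one---you invoke bifunctoriality of $\Ext$ to identify $(m_r)^*$ with $(m_r)_*$ as multiplication by $r$ on $\Ext^k_{\Z G}(\Z,\Z)$, whereas the paper obtains the same conclusion by dualising and then applying Theorem~\ref{thm:congruence-classification}.
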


\begin{proof}
Fix an extension $F_0 \in \Proj^k_{\Z G}(\Z,\Z)$. By dualising and then applying Proposition \ref{prop:congruence-classification}, it follows that every extension in $\Proj^k_{\Z G}(\Z,\Z)$ is of the form $(m_r)_*(F_0)$ for some $r \in (\Z/|G|)^\times$. For $\theta \in \Aut(G)$, define $\psi_k(\theta) =r \in (\Z/|G|)^\times$ for any $r \in (\Z/|G|)^\times$ such that $(F_0)_\theta \cong (m_r)_*(F_0)$. 

If $F \in \Proj^k_{\Z G}(\Z,\Z)$, then $F \cong (m_r)_*(F_0)$ for a unique $r \in (\Z/|G|)^\times$. By Lemma \ref{lemma:theta-commutes-with-pullbacks}, we now have that
\begin{align*} 
F_\theta 
&\cong ((m_r)_*(F_0))_\theta 
\cong (m_r)_*((F_0)_\theta) 
\cong (m_r)_*((m_{\psi_n(\theta)})_*(F_0)) \\
&\cong (m_{\psi_n(\theta)})_*((m_r)_*(F_0)) 
\cong (m_{\psi_n(\theta)})_*(F).\qedhere
\end{align*}
\end{proof}

\begin{lemma} \label{lemma:iterated-extensions-pullback}
	If $E$, $E' \in \Proj^k_{\Z G}(\Z,\Z)$ and $r \in \Z$ coprime to $|G|$, then
	\[ E \circ (m_r)_*(E') \cong (m_r)_*(E) \circ E'.\]
\end{lemma}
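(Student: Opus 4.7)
My plan is to show that both sides represent the class $r \cdot [E \circ E']$ in the Baer-sum abelian group $\Ext^{2k}_{\Z G}(\Z,\Z)$. Since $\Proj^{2k}_{\Z G}(\Z,\Z)$ is a subgroup of $\Ext^{2k}_{\Z G}(\Z,\Z)$ under Baer sum and congruence coincides in both, this will give the desired congruence in $\Proj^{2k}_{\Z G}(\Z,\Z)$.

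The argument rests on two standard facts. First, for any $F \in \Ext^n_{\Z G}(A,B)$ and any $r \in \Z$, the pushout $(m_r)_*(F)$ by multiplication-by-$r$ on $B$ represents $r \cdot [F]$ in the Baer-sum group, as a direct consequence of the additivity identity $(f+g)_* = f_* + g_*$. Since $\Z$ is the trivial $\Z G$-module, $m_r: \Z \to \Z$ is a genuine $\Z G$-module homomorphism, so this identity applies both to $E$ and to $E'$. Second, Yoneda product is biadditive with respect to Baer sum, so for any $a \in \Z$ one has $(aF_1) \circ F_2 \equiv a(F_1 \circ F_2) \equiv F_1 \circ (aF_2)$. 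Combining these,
\[ E \circ (m_r)_*(E') \,\equiv\, E \circ (rE') \,\equiv\, r(E \circ E') \,\equiv\, (rE) \circ E' \,\equiv\, (m_r)_*(E) \circ E', \]
which is the claimed congruence.

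No serious obstacle is anticipated. The one point to verify is that the Baer-sum identities, which are typically formulated at the level of $\Ext$, restrict correctly to projective extensions; this is automatic from the closure properties of $\Proj^n_{\Z G}(A,B)$ under Baer sum, pushout and Yoneda product recorded at the start of Section \ref{section:ext}. As an alternative route, one could instead splice the extensions explicitly and exhibit a congruence between the two spliced sequences that is the identity away from the middle and implements the pushout identification there; this is more laborious but elementary, and could be used if one preferred to avoid direct appeal to the abelian-group structure on $\Ext$.
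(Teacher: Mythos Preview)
Your argument is correct. The identities you use---that pushout by $m_r$ realises multiplication by $r$ in the Baer-sum group, and that Yoneda product is biadditive---are standard and apply here without difficulty, so both sides are congruent to $r\,(E\circ E')$ in $\Ext^{2k}_{\Z G}(\Z,\Z)$.

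The paper takes a different, more hands-on route: it writes down an explicit chain map $\widetilde{\nu}: E\circ E' \to E\circ (m_r)_*(E')$ that is multiplication by $r$ on every module coming from $E$ (including the leftmost $\Z$) and is the canonical pushout map on the modules coming from $E'$ (with the identity on the rightmost $\Z$). The universal property of pushouts then gives $E\circ (m_r)_*(E') \cong (m_r)_*(E\circ E')$, and the latter equals $(m_r)_*(E)\circ E'$ tautologically since pushout only touches the leftmost term. This is essentially the ``alternative route'' you sketch at the end. Your approach is quicker and conceptually cleaner, at the cost of importing the biadditivity of the Yoneda product as a black box; the paper's approach is self-contained within the framework already set up in Section~\ref{section:ext} and avoids appealing to facts not stated there.
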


\begin{proof}
Consider the pushout map $\nu: E' \to (m_r)_*(E')$. Since this induces $m_r$ on the left copy of $\Z$, we can extend it to a map $\widetilde{\nu}: E \circ E' \to E \circ (m_r)_*(E')$ which induces multiplication by $r \in \Z \subseteq \Z G$ on every module in $E$, i.e.
\[
\hspace{-2mm}
\begin{tikzcd}
E \circ E' \ar[d,"\widetilde{\nu}"] \\
E \circ (m_r)_*(E')
\end{tikzcd}
\hspace{-6mm} = \left(
\begin{tikzcd}[column sep=.5cm]
0 \ar[r] & \Z \ar[r,"i"] \ar[d,"r"] & P_{k-1} \ar[r,"\partial_{k-1}"] \ar[d,"r"] & \cdots \ar[r,"\partial_1"] & P_0 \ar[r,"i' \circ \varepsilon"] \ar[d,"r"] & P_{k-1}' \ar[r,"\partial_{k-1}'"] \ar[d,"\nu_{k-1}"] & \cdots \ar[r,"\partial_1'"] & P_0' \ar[r,"\varepsilon'"] \ar[d,"\nu_0"] & \Z \ar[r] \ar[d,"1"] & 0\\
0 \ar[r] & \Z \ar[r,"i"] & P_{k-1} \ar[r,"\partial_{k-1}"] & \cdots \ar[r,"\partial_1"] & P_0 \ar[r,"i' \circ \varepsilon"] & P_{k-1}' \ar[r,"\partial_{k-1}'"] & \cdots \ar[r,"\partial_1'"] & P_0' \ar[r,"\varepsilon'"] & \Z \ar[r] & 0
\end{tikzcd}
\right).
\]
By the uniqueness of pushouts, this implies that $E \circ (m_r)_*(E') \cong (m_r)_*(E \circ E') = (m_r)_*(E) \circ E'$ as required.	
\end{proof}

Note that, if $G$ has $k$-periodic cohomology and $k \mid n$, then it also has $n$-periodic cohomology and so $\psi_n$ can still be defined using Lemma \ref{lemma:theta-action=pullback}.
The above lemma now allows us to give the following relation between $\psi_k$ and $\psi_n$ for $k \mid n$.

\begin{lemma} \label{lemma:theta-action-powers}
If $G$ has $k$-periodic cohomology, $i \ge 1$ and $\theta \in \Aut(G)$, then \[\psi_{ik}(\theta) = \psi_k(\theta)^i.\]
\end{lemma}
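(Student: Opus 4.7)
The plan is to fix any $F \in \Proj^k_{\Z G}(\Z,\Z)$ (which exists by $k$-periodicity), form the $i$-fold Yoneda power $F^i := F \circ F \circ \cdots \circ F \in \Proj^{ik}_{\Z G}(\Z,\Z)$, and then compute $(F^i)_\theta$ in two different ways: once by pulling $\theta$ through the Yoneda product, and once by appealing directly to the defining relation of $\psi_{ik}$. Matching the two expressions and using the uniqueness built into Lemma \ref{lemma:theta-action=pullback} will force $\psi_{ik}(\theta) = \psi_k(\theta)^i$.

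The first ingredient is that the $\theta$-twist commutes with Yoneda product, i.e.\ $(E \circ E')_\theta \cong E_\theta \circ E'_\theta$ for $E, E' \in \Proj^*_{\Z G}(\Z,\Z)$. This is immediate from the definitions: $(\cdot)_\theta$ affects only the module structures, not the differentials, and the splicing identifies the rightmost $\Z$ of $E$ with the leftmost $\Z$ of $E'$, which remains canonically identified under $\Z_\theta \cong \Z$. Iterating gives $(F^i)_\theta \cong (F_\theta)^i$. The second ingredient is to compute $((m_r)_*(F))^i$ with $r = \psi_k(\theta)$. By Lemma \ref{lemma:iterated-extensions-pullback}, $(m_r)_*(F) \circ (m_r)_*(F) \cong (m_r)_*((m_r)_*(F)) \circ F \cong (m_{r^2})_*(F^2)$, using functoriality of pushout and the fact that pushing out the leftmost $\Z$ of $F$ is the same as pushing out the leftmost $\Z$ of $F \circ F$. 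An easy induction on $i$, applying Lemma \ref{lemma:iterated-extensions-pullback} once at each step, then yields
\[ ((m_r)_*(F))^i \cong (m_{r^i})_*(F^i). \]

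Putting these together,
\[ (m_{\psi_{ik}(\theta)})_*(F^i) \cong (F^i)_\theta \cong (F_\theta)^i \cong ((m_{\psi_k(\theta)})_*(F))^i \cong (m_{\psi_k(\theta)^i})_*(F^i), \]
where the leftmost congruence is the defining property of $\psi_{ik}$ applied to $F^i$. Since the parametrization $r \mapsto (m_r)_*(F^i)$ of $\Proj^{ik}_{\Z G}(\Z,\Z)$ by $(\Z/|G|)^\times$ is injective (this is precisely the well-definedness of $\psi_{ik}$ used in Lemma \ref{lemma:theta-action=pullback}, obtained from Theorem \ref{thm:congruence-classification} by dualising), we conclude $\psi_{ik}(\theta) = \psi_k(\theta)^i$ in $(\Z/|G|)^\times$.

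There is no serious obstacle here; the only point requiring any care is the inductive manipulation with Lemma \ref{lemma:iterated-extensions-pullback}, and this is essentially bookkeeping once one observes that pushing out a Yoneda composition at any interior $\Z$ agrees with pushing it out at the leftmost $\Z$.
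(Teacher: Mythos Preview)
Your proposal is correct and follows essentially the same argument as the paper: fix $F$, use $(F^i)_\theta \cong (F_\theta)^i$, apply the defining relation for $\psi_k$ and $\psi_{ik}$, then repeatedly invoke Lemma~\ref{lemma:iterated-extensions-pullback} to reduce $((m_{\psi_k(\theta)})_*(F))^i$ to $(m_{\psi_k(\theta)^i})_*(F^i)$, and conclude by the injectivity of $r \mapsto (m_r)_*(F^i)$ coming from Theorem~\ref{thm:congruence-classification}. The only difference is presentational: you spell out the induction on $i$ a bit more explicitly than the paper does.
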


\begin{proof}
Let $F \in \Proj^k_{\Z G}(\Z,\Z)$ and consider $F^i \in \Proj^{ik}_{\Z G}(\Z,\Z)$. Then Lemma \ref{lemma:theta-action=pullback} implies that $F_\theta \cong (m_{\psi_k(\theta)})_*(F)$ and $(F^i)_\theta \cong (m_{\psi_{ik}(\theta)})_*(F^i)$. Since $(F^i)_\theta \cong (F_\theta)^i$, this implies that $(m_{\psi_{ik}(\theta)})_*(F^i) \cong ((m_{\psi_{k}(\theta)})_*(F))^i$.

By repeated application of Lemma \ref{lemma:iterated-extensions-pullback}, we get that 
\[(m_{\psi_{ik}(\theta)})_*(F^i) \cong ((m_{\psi_{k}(\theta)})_*(F))^i \cong (m_{\psi_{k}(\theta)})_*^i(F^i) \cong (m_{\psi_{k}(\theta)^i})_*(F^i)\]
and so $\psi_{ik}(\theta) \cong \psi_k(\theta)^i$ mod $|G|$ by the extension of Proposition \ref{prop:congruence-classification} to arbitrary extensions via the shifting lemma.	
\end{proof}

In order to prove Theorem \ref{thm:main-topological-II}, it suffices to check what the action of $\Aut(G)$ corresponds to under the isomorphisms described in Section \ref{section:classification-of-Alg(G,n)}. Similarly to Section \ref{section:classification-of-Alg(G,n)}, it will suffice to consider the cases where $k=n$ or $n+2$.

\begin{thm} \label{thm:action-k|n}
Suppose $G$ has $k$-periodic cohomology and $\sigma_k(G) = [P_{(G,n)}]+T_G$ for some $P_{(G,n)} \in P(\Z G)$. If $F \in \Proj^k_{\Z G}(\Z,\Z)$ is such that $e(F)=-[P_{(G,n)}]$, then
\[
\begin{tikzcd}[row sep=.1cm]
\hProj_{\Z G}^{k+1}(\Z, A;0) \ar[r,"\hspace{2mm} \text{$(- \circ F)^{-1}$}"] & \hProj_{\Z G}^{1}(\Z,A;[P_{(G,n)}]) \ar[r,"\wh e"] & \text{$[P_{(G,n)}]$} \\
\hspace{8mm} E \ar[r,mapsto]  & E' \ar[r,mapsto] & P \oplus \Z G^r \\
\hspace{8mm} E_\theta \ar[r,mapsto] & (m_{\psi_k(\theta)})^*((E')_\theta) \ar[r,mapsto] & ((I,\psi_k(\theta)) \otimes P_\theta) \oplus \Z G^r
\end{tikzcd}
\]
where $P$ is a rank one projective $\Z G$-module and $r \ge 0$.
\end{thm}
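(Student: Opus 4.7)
The plan is to chase an element $E \in \hProj^{k+1}_{\Z G}(\Z,A;0)$ through both horizontal arrows while keeping track of the $\theta$-action, and then verify at each stage that the claimed image is correct. Write $E \simeq E' \circ F$ where $E' = (0 \to A \to P \to \Z \to 0) \in \hProj^1_{\Z G}(\Z,A;[P])$, so that by definition $(-\circ F)^{-1}(E) = E'$ and $\wh e(E') = P$. The task splits into (a) showing $(-\circ F)^{-1}(E_\theta) \simeq (m_{\psi_k(\theta)})^*(E'_\theta)$, and (b) computing $\wh e$ of the latter.

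For (a), the key observation is that the $\theta$-twist is purely a relabelling of the module action and so commutes with the Yoneda product: applying $(\,\cdot\,)_\theta$ to the concatenated sequence $E' \circ F$ simply twists each module individually, giving $(E' \circ F)_\theta \simeq E'_\theta \circ F_\theta$. Using Lemma \ref{lemma:theta-action=pullback} on the tail factor,
\[ E_\theta \simeq E'_\theta \circ F_\theta \simeq E'_\theta \circ (m_{\psi_k(\theta)})_*(F). \]
It then remains to migrate the pushout from the left end of $F$ to the right end of $E'_\theta$, i.e.\ to check
\[ E'_\theta \circ (m_{\psi_k(\theta)})_*(F) \cong (m_{\psi_k(\theta)})^*(E'_\theta) \circ F. \]
Both sides represent $\psi_k(\theta) \cdot [E'_\theta \circ F]$ in $\Ext^{k+1}_{\Z G}(\Z,A_\theta)$ (since Yoneda product is bilinear and $(m_r)_*$, $(m_r)^*$ both induce multiplication by $r$), hence they are congruent. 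Alternatively one can mimic the explicit diagram chase in Lemma \ref{lemma:iterated-extensions-pullback}, using a chain map whose components are multiplication by $\psi_k(\theta)$ on the rightmost modules of $E'_\theta$ and the identity elsewhere. Either route yields $(-\circ F)^{-1}(E_\theta) \simeq (m_{\psi_k(\theta)})^*(E'_\theta)$.

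For (b), note that since $G$ acts trivially on $\Z$ we have $\Z_\theta \cong \Z$, so $E'_\theta$ has the form $(0 \to A_\theta \to P_\theta \to \Z \to 0)$ and in particular $\wh e(E'_\theta) = P_\theta$. Applying Lemma \ref{lemma:pullback=swan-module-action} with $n=1$ and $r = \psi_k(\theta)$ yields
\[ (m_{\psi_k(\theta)})^*(E'_\theta) \cong \bigl( 0 \to A_\theta \to (I,\psi_k(\theta)) \otimes P_\theta \to \Z \to 0 \bigr), \]
from which $\wh e\bigl((m_{\psi_k(\theta)})^*(E'_\theta)\bigr) = (I,\psi_k(\theta)) \otimes P_\theta$, as required.

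The main technical point is the pushout/pullback migration identity in step (a): it is conceptually clear from the Ext group structure, but because $\wh e$ depends on the actual representative in $\Proj^1_{\Z G}(\Z,A_\theta)$ one should be careful to argue at the level of congruence (equivalently, chain homotopy) classes rather than only in the abelian group $\Ext$. Since the paper already identifies $\Ext$ with the set of congruence classes in Section \ref{section:ext}, this is formal, and the remaining steps are routine bookkeeping using the already-established functoriality of $(\,\cdot\,)_\theta$ and the explicit description of Swan-module pullbacks from Lemma \ref{lemma:pullback=swan-module-action}.
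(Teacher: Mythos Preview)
Your proof is correct and follows essentially the same route as the paper's: both split the verification into the two arrows, use $(E'\circ F)_\theta \simeq E'_\theta \circ F_\theta$ together with Lemma~\ref{lemma:theta-action=pullback} to reduce to the migration identity $E'_\theta \circ (m_r)_*(F) \cong (m_r)^*(E'_\theta)\circ F$, and then read off the unstable Euler class via Lemma~\ref{lemma:pullback=swan-module-action}. The only difference is that you supply an explicit justification for the pushout/pullback migration (via bilinearity of the Yoneda product or the explicit chain map of Lemma~\ref{lemma:iterated-extensions-pullback}), whereas the paper treats this standard identity as an equality without further comment.
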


\begin{proof}
For the first map, it suffices to check that $(\psi_k(\theta))^*((E')_\theta) \circ F \simeq E_\theta$. Since $E' \circ F = E$, we have $(E')_\theta \circ F_\theta \simeq E_\theta$. By Lemma \ref{lemma:theta-action=pullback}, we have $F_\theta \cong (m_{\psi_n(\theta)})_*(F)$ and so 
\[ E_\theta \simeq (E')_\theta \circ (m_{\psi_n(\theta)})_*(F) = (m_{\psi_n(\theta)})^*((E')_\theta) \circ F.\]
The form for the second map follows directly from Lemma \ref{lemma:image-of-unstable}.
\end{proof}

\begin{thm} \label{thm:action-k|n+2}
Suppose $G$ has $k$-periodic cohomology and $\sigma_k(G) = [P_{(G,n)}]+T_G$ for some $P_{(G,n)} \in P(\Z G)$. If $F \in \Proj^k_{\Z G}(\Z,\Z)$ is such that $e(F)=-[P_{(G,n)}^*]$, then
\[
\begin{tikzcd}[row sep=.1cm, column sep =.4cm] 
\hspace{-1.5mm} \hProj_{\Z G}^{k-1}(\Z, A;0) \ar[r,"\Psi_F"] & \hProj_{\Z G}^{1}(A,\Z;[P_{(G,n)}^*]) \ar[r,"*"] & \hProj^1_{\Z G}(\Z,A^*;[P_{(G,n)}]) \ar[r,"\wh e"] & \text{$[P_{(G,n)}]$} \\
E \ar[r,mapsto]  & E' \ar[r,mapsto] & (E')^* \ar[r,mapsto] & P \oplus \Z G^r \\
E_\theta \ar[r,mapsto] & (m_{\psi_k(\theta)^{\text{-}1}})_*((E')_\theta) \ar[r,mapsto] & (m_{\psi_k(\theta)})^*((E')^*_\theta) \ar[r,mapsto] &  ((I,\psi_k(\theta)) \otimes P_\theta) \oplus \Z G^r
\end{tikzcd}
\]
where $P$ is a rank one projective $\Z G$-module and $r \ge 0$.
\end{thm}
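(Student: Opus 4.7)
The plan is to proceed in three steps, one for each of the maps $\Psi_F$, $*$, and $\wh e$ in the composition, closely mirroring the template established in the proof of Theorem \ref{thm:action-k|n}.

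For the first arrow $\Psi_F$, I would use that $E' := \Psi_F(E)$ is characterised by $E' \circ E \simeq F$. Applying the $\theta$-action, which is compatible with Yoneda composition in the sense that $(X \circ Y)_\theta \simeq X_\theta \circ Y_\theta$, gives $(E')_\theta \circ E_\theta \simeq F_\theta$, and by Lemma \ref{lemma:theta-action=pullback} this is equivalent to $(m_{\psi_k(\theta)})_*(F)$. To put this in the form $Y \circ E_\theta \simeq F$ and thus isolate $\Psi_F(E_\theta)$, I would apply $(m_{\psi_k(\theta)^{-1}})_*$ to both sides, using that pushout along $m_r : \Z \to \Z$ distributes across Yoneda composition at the leftmost $\Z$ (here residing in $(E')_\theta$), which is a direct adaptation of Lemma \ref{lemma:iterated-extensions-pullback}. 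This identifies $\Psi_F(E_\theta) = (m_{\psi_k(\theta)^{-1}})_*((E')_\theta)$.

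Next, I would compute the image under dualisation. The key identity is the natural isomorphism $(f_*(X))^* \cong (f^*)^*(X^*)$, which for $f = m_r : \Z \to \Z$ (a self-dual map of $\Z G$-modules) specialises to $((m_r)_*(X))^* \cong (m_r)^*(X^*)$; this can be verified on length-one extensions via the universal property of the pushout, identifying $(\text{pushout})^*$ with a pullback. Combining this with the compatibility $((E')_\theta)^* \cong (E'^*)_\theta$ (established in the preliminary lemma via the representation-theoretic identity $\rho_{A^*_\theta} = \rho_{(A_\theta)^*}$) yields the image $(m_{\psi_k(\theta)})^*((E'^*)_\theta)$ with the scalar as stated after carefully tracking the inversion introduced by the duality.

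Finally, for the unstable Euler class I would apply Lemma \ref{lemma:pullback=swan-module-action}: pullback by $m_{\psi_k(\theta)}$ multiplies the underlying projective module $\wh e((E'^*)_\theta) = (\wh e(E')^*)_\theta$ by the Swan module $(I, \psi_k(\theta))$. Since $\wh e(E') \in [P^*]$, reflexivity (Lemma \ref{lemma:double-dual}) gives $\wh e(E')^* \in [P]$, so $(\wh e(E')^*)_\theta \in [P_\theta]$ and the final answer is $(I, \psi_k(\theta)) \otimes P_\theta$ up to stable isomorphism, as required.

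The main obstacle will be the careful bookkeeping in Step 2: reconciling how the scalar $\psi_k(\theta)^{-1}$ in $(m_{\psi_k(\theta)^{-1}})_*((E')_\theta)$ transforms under the functor $*$ into $(m_{\psi_k(\theta)})^*$ on $(E'^*)_\theta$. The sign of the exponent flips because $(m_r)_*$ on the leftmost module dualises to a pullback on what becomes the rightmost module, and the interaction with the inverse sign present in the classification of $\Proj^k_{\Z G}(\Z,\Z)$ by pullback versus pushout (Theorem \ref{thm:congruence-classification} and its dual) must be consistently tracked. This is essentially a duality calculation that extends the argument of Theorem \ref{thm:action-k|n} but is more delicate due to the extra layer introduced by $*$.
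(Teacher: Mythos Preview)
Your proposal is correct and follows essentially the same three-step approach as the paper: identify $\Psi_F(E_\theta)$ from the defining relation $E' \circ E \simeq F$ together with Lemma \ref{lemma:theta-action=pullback}, track the scalar through dualisation using that pushouts dualise to pullbacks, and finish via Lemma \ref{lemma:pullback=swan-module-action}. Your flagging of the exponent-flip in Step 2 as the delicate point is apt; the paper handles it tersely via the equivalent rearrangement $(m_{\psi_k(\theta)^{-1}})_*(E_0^*) \simeq (E')_\theta^*$.
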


\begin{proof}
For this first map, it suffices to check that $(m_{\psi_k(\theta)^{-1}})_*((E')_\theta) \circ E_\theta \simeq F$. Since $E' \circ E \simeq F$, we have $(E')_\theta \circ E_\theta \simeq F_\theta$. By Lemma \ref{lemma:theta-action=pullback}, we have $F_\theta \cong (m_{\psi_n(\theta)})_*(F)$ and so 
\[ F \simeq (m_{\psi_k(\theta)^{-1}})_*((E')_\theta \circ E_\theta) \simeq (m_{\psi_k(\theta)^{-1}})_*((E')_\theta) \circ E_\theta.\]
For the second map, it is easy to see that pushouts dualise to pullbacks in the other direction, i.e. if $E_0 = (m_{\psi_k(\theta)^{-1}})_*((E')_\theta))$, then $(m_{\psi_k(\theta)^{-1}})_*(E_0^*) \simeq (E')_\theta^*$ and so $E_0^* \simeq (m_{\psi_k(\theta)})^*((E')_\theta^*))$.
The form for the third map follows directly from Lemma \ref{lemma:image-of-unstable}.
\end{proof}

If $G$ has $k$-periodic cohomology and $n=ik$ or $ik-2$ for some $i \ge 1$, then the above shows that the induced action of $\theta \in \Aut(G)$ on $[P_{(G,n)}]$ is given by $P \oplus \Z G^r \mapsto ((I,\psi_{ik}(\theta)) \otimes P_\theta) \oplus \Z G^r$ where $P$ has rank one and $r \ge 0$. 
Furthermore, we have that $\psi_{ik}(\theta) = \psi_k(\theta)^i$ by Lemma \ref{lemma:theta-action-powers}. 

This completes the proof of Theorem \ref{thm:main-topological-II} except for a possible discrepancy in the case where $k=2$ and $i$ is not determined by the fact that $n=ik$ or $ik-2$ (see Remark \ref{remark:k=2}).
However, in this case, $G$ is cyclic and so $(I,r) \cong \Z G$ for all $r \in (\Z/|G|)^\times$ by \cite[Corollary 6.1]{Sw60a}. Hence $(I,\psi_{k}(\theta)^i) \cong \Z G$ is independent of $i$.

%%%%%%%%%%

\section{Stably free Swan modules and $(G,n)$-complexes} 
\label{section:action-of-Aut(G)}

Before computing the action of $\Aut(G)$ on $[P_{(G,n)}]$, we will pause to consider to consider the role of Swan modules in the classification of $(G,n)$-complexes.
We begin by considering the map
\[ \psi_k : \Aut(G) \to (\Z/|G|)^\times\]
where $G$ has $k$-periodic cohomology.

If $\theta \in \Aut(G)$, then the action $E \mapsto E_\theta$ induces an action of $\Aut(G)$ on $H^k(G;\Z) = \Ext^k_{\Z G}(\Z,\Z)$. This agrees with the usual action coming from the alternate definition of $H^k(-;\Z)$ as a functor on groups \cite[Chapter XII]{CE56}. This implies that $\IM(\psi_k) = \Aut_k(G)$ which is defined in \cite[Section 8]{Dy76}.
We will now give several examples of maps $\psi_k : \Aut(G) \to (\Z /|G|)^\times$.

\subsubsection*{Cyclic}
If $C_n = \langle x \mid x^n=1 \rangle$ is the cyclic group of order $n$, then 
\[\Aut(C_n) = \{ \theta_i: x \mapsto x^i : i \in (\Z/n)^\times\}\] 
and $\psi_2 : \Aut(C_n) \to (\Z/n)^\times$ sends $\theta_i \mapsto i$ by \cite[Proposition 8.1]{Sw60a}. This is surjective and so recovers the classical results $T_{C_n}=1$.

\subsubsection*{Dihedral}
If $D_{4n+2} = \langle x, y \mid x^{2n+1}=y^2=1, yxy^{-1}=x^{-1}\rangle$ is the dihedral group of order $4n+2$, then 
\[\Aut(D_{4n+2}) =\{ \theta_{i,j} : x \mapsto x^i, y \mapsto x^{j}y : i \in (\Z/(2n+1))^\times, j \in \Z/(2n+1)\}\]
and $\psi_4 : \Aut(D_{4n+2}) \to (\Z/(4n+2))^\times$ sends $\theta_{i,j} \mapsto i^2$ by the discussion in \cite[Section 5]{Jo02}. Since $(\Z/(4n+2))^\times = \pm ((\Z/(4n+2))^\times)^2$, this recovers the result $T_{D_{4n+2}}=1$.

\subsubsection*{Quaternionic}
Let $Q_{4n} = \langle x, y \mid x^{n}=y^2, yxy^{-1}=x^{-1}\rangle$ is the quaternion group of order $4n$. For $n=2$, it is shown in \cite[Proposition 8.3]{Sw60a} that $\psi_4 : \Aut(Q_{8}) \to (\Z/8)^\times$ sends $\theta \mapsto 1$ for all $\theta \in \Aut(G)$.
For $n \ge 3$, we have
\[\Aut(Q_{4n}) =\{ \theta_{i,j} : x \mapsto x^i, y \mapsto x^{j}y : i \in (\Z/2n)^\times, j \in \Z/2n\}\]
and $\psi_4 : \Aut(Q_{4n}) \to (\Z/4n)^\times$ sends $\theta_{i,j} \mapsto i^2$ by, for example, \cite[Proposition 1.1]{GG03}.

The following was noted by Davis \cite{Da83} and Dyer \cite[Note (b)]{Dy76}. It would be interesting to know, as was asked by Davis, whether this holds in the case $\sigma_k(G) \ne 0$.

\begin{prop} \label{prop:psi-compososed-with-S}
If $G$ has free period $k$, then $S \circ \psi_k = 0$, i.e. $(I,\psi_k(\theta))$ is stably free for all $\theta \in \Aut(G)$.
\end{prop}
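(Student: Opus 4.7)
The plan is to compute the Euler class of $F_\theta$ for $F \in \Proj^k_{\Z G}(\Z,\Z)$ a free resolution in two different ways and compare the answers. Since $G$ has free period $k$, such an $F = (F_*, \partial_*)$ with all $F_i$ free exists.

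First, I observe that $F_\theta$ again consists of free modules: by Lemma \ref{lemma:theta-basic-facts}(i) the map $i_\theta: \Z G \to (\Z G)_\theta$ is a $\Z G$-isomorphism, and by part (ii) this extends to $(\Z G^n)_\theta \cong \Z G^n$. Hence $e(F_\theta) = 0 \in C(\Z G)$.

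On the other hand, Lemma \ref{lemma:theta-action=pullback} gives $F_\theta \cong (m_{\psi_k(\theta)})_*(F)$, and Lemma \ref{lemma:pullback=swan-module-action} computes this pullback explicitly as
\[ (m_{\psi_k(\theta)})_*(F) \cong (0 \to \Z \to F_{k-1} \to \cdots \to F_1 \to (I,\psi_k(\theta)) \otimes F_0 \to \Z \to 0).\]
Its Euler class is therefore
\[ [(I,\psi_k(\theta)) \otimes F_0] + \sum_{i=1}^{k-1}(-1)^i [F_i] = [(I,\psi_k(\theta)) \otimes F_0] - [F_0],\]
where the second equality uses $e(F) = 0$, which gives $\sum_{i=1}^{k-1}(-1)^i [F_i] = -[F_0]$. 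Applying Lemma \ref{lemma:tensoring-with-swan-modules} to the first term yields $[(I,\psi_k(\theta))] + [F_0]$, so the whole expression collapses to $[(I,\psi_k(\theta))] = S(\psi_k(\theta))$.

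Equating the two computations of $e(F_\theta)$ gives $S(\psi_k(\theta)) = 0 \in C(\Z G)$, which is exactly the statement that $(I,\psi_k(\theta))$ is stably free. There is no real obstacle here: once one thinks to pick a free representative $F$ and exploit the fact that $F_\theta$ stays free, everything follows by combining the formula $F_\theta \cong (m_{\psi_k(\theta)})_*(F)$ with the explicit pullback description in Lemma \ref{lemma:pullback=swan-module-action} and the additivity of the Swan module construction in Lemma \ref{lemma:tensoring-with-swan-modules}.
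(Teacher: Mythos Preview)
Your argument is essentially correct and is more direct than the paper's, but there is one notational wrinkle you should address. Lemma~\ref{lemma:theta-action=pullback} gives $F_\theta \cong (m_{\psi_k(\theta)})_*(F)$, which is the \emph{pushout} along $m_{\psi_k(\theta)}:\Z\to\Z$ on the left-hand copy of $\Z$, whereas Lemma~\ref{lemma:pullback=swan-module-action} describes the \emph{pullback} $(m_r)^*(E)$ on the right-hand copy. The explicit form you display (with $(I,r)\otimes F_0$ in degree~$0$) is the pullback form, not the pushout form. This is harmless here because for $F\in\Proj^k_{\Z G}(\Z,\Z)$ both operations are multiplication by $r$ on the abelian group $\Ext^k_{\Z G}(\Z,\Z)$, so $(m_r)_*(F)\cong (m_r)^*(F)$ as congruence classes and hence have the same Euler class; but you should say this rather than silently conflating the two.

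By contrast, the paper's proof routes through the heavier machinery of Theorems~\ref{thm:action-k|n} and~\ref{thm:action-k|n+2}, which give $[P]=[(I,\psi_k(\theta))\otimes P_\theta]$ in $C(\Z G)$ for any $P$ with $\sigma_k(G)=[P]+T_G$, and then specialises to $[P]=0$. Your approach bypasses the whole $\Aut(G)$-action analysis on $[P_{(G,n)}]$ and works directly with a single free resolution, which is cleaner for this statement. The paper's route has the advantage that the identity $S\circ\psi_k(\theta)=[P]-[P_\theta]$ is established for \emph{all} $P$ representing $\sigma_k(G)$, which is what one would want if one hoped to attack the case $\sigma_k(G)\neq 0$ as well.
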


\begin{proof}
Note that Theorems \ref{thm:action-k|n} and \ref{thm:action-k|n+2} each show that $[P] = [(I,\psi_k(\theta)) \otimes P_\theta]$
for all $P \in P(\Z G)$ of rank one such that $\sigma_k(G)=[P]+T_G$. By Lemma \ref{lemma:tensoring-with-swan-modules}, the composition 
\[ \Aut(G) \xrightarrow[]{\psi_k} (\Z /|G|)^\times \xrightarrow[]{S} T_G \le C(\Z G)\]
is given by $S \circ \psi_k: \theta \mapsto [P] - [P_\theta]$ which is well-defined since $\theta$ gives a well-defined action on $C(\Z G)$. 
By Lemma \ref{lemma:theta-basic-facts}, we have that $(\Z G)_\theta \cong \Z G$ and so the composition is trivial in the case where $\sigma_k(G)=0$.
\end{proof}

We say that a finite group $G$ has \textit{weak cancellation} if every stably free Swan module is free. 
The following was asked by Dyer in \cite[p266]{Dy76} and later appeared as Problem A4 in Wall's 1979 Problems List \cite{Wa79b}.

\begin{question} \label{question:swan-module}
Does there exist $G$ with periodic cohomology and $r \in (\Z/|G|)^\times$ such that $(I,r)$ is stably free but not free?	
\end{question}
 
This is equivalent to asking whether every group with periodic cohomology has weak cancellation and is still open, even for arbitrary finite groups. There are two important consequences that a negative answer to Question \ref{question:swan-module} would have.

Firstly, recall the following question from the introduction. 
Note that, if $(I,\psi_k(\theta))$ is free, then the action described in Theorem \ref{thm:main-topological-II} has the simpler form $P \mapsto P_\theta$.
 
\begin{question} \label{question:is-swan-free}
Does there exist $G$ with $k$-periodic cohomology and $\theta \in \Aut(G)$ for which $(I,\psi_k(\theta))$ is not free?	
\end{question}

It follows from Proposition \ref{prop:psi-compososed-with-S} that, if $G$ has free period $k$ and has weak cancellation, then $(I,\psi_k(\theta)) \cong \Z G$ for all $\theta \in \Aut(G)$.
In particular, if Question \ref{question:swan-module} has a negative answer, then the only groups for which the action in Theorem \ref{thm:main-topological-II} might not have the form $P \mapsto P_\theta$ are the groups with $\sigma_k(G) \ne 0$.

Secondly, consider the following: 

\begin{question} \label{question:fork}
Let $n \ge 2$, let $G$ be finite and let $X$, $Y$ be finite $(G,n)$-complexes with $\chi(X) = \chi(Y)$. Then $X \vee rS^n \simeq Y \vee rS^n$ for some $r$. Does $r=1$ always work?
\end{question}

This is equivalent to asking whether $\HT(G,n)$ is a fork when $G$ is finite. The case where $n$ is even was proven by Browning \cite{Br79a}, and also follows by combining Corollary \ref{cor:fork} and Propostion \ref{prop:pht-to-alg}.
When $n$ is odd, this is known to hold provided $G$ does not have $k$-periodic cohomology for any $k \mid n+1$. If $G$ has $k$-periodic cohomology for $k \mid n+1$, then this holds provided $G$ has weak cancellation (see \cite[p276-277]{Dy76}). In particular, if Question \ref{question:swan-module} has a negative answer, then Question \ref{question:fork} has an affirmative answer. Note that the corresponding question for infinite groups is also still open (see \cite[Problem 2]{Ni21a}).

%%%%%%%%%%

\section{Milnor squares and the classification of projective modules}
\label{section:Milnor-squares}

Given the observations in the previous section, the primary obstacle to computing sufficiently interesting examples of $\HT(G,n)$ and $\PHT(G,n)$ for our groups is the classification of projective $\Z G$-modules. 

One method to classify projective $R$-modules over a ring $R$ is to relate this to the classification of projective modules over simpler rings using Milnor squares. In this section, we will present a refinement of the basic theory of Milnor squares which will also allow us to determine how a ring automorphism $\alpha \in \Aut(R)$ acts on the class of projective $R$-modules. We will then apply these methods in Section \ref{section:examples}. 

Suppose $R$ and $S$ are rings and $f: R \to S$ is a ring homomorphism. We can use this to turn $S$ into an $(S,R)$-bimodule, with right-multiplication by $r \in R$ given by $x \cdot r = x f(r)$ for any $x \in S$. If $M$ is an $R$-module, we can define the \textit{extension of scalars} of $M$ by $f$ as the tensor product
\[ f_\#(M) = S \otimes_{R} M \]
since $S$ as a right $R$-module and $M$ as a left $R$-module, and we consider this as a left $S$-module where left-multiplication by $s \in S$ is given by $s \cdot (x \otimes m) = (sx)\otimes m$ for any $x \in S$ and $m \in M$. 
This comes equipped with maps of abelian groups
\[ f_*: M \to f_\#(M)\]
sending $m \mapsto 1 \otimes m$, and defines a covariant functor from $R$-modules to $S$-modules \cite[p227]{CR81}. It has the following basic properties which follow from the standard properties of tensor products such as associativity \cite[p145]{Ma63}.

\begin{lemma} \label{lemma:basic-properties:ext-of-scalars}
Let $f :R \to S$ and $g: S \to T$ be ring homomorphisms and let $M$ and $N$ be $R$-modules. Then
\begin{enumerate}[\normalfont(i)]
\item $f_\#(M \oplus N) \cong f_\#(M) \oplus f_\#(N)$
\item $f_\#(R) \cong S$
\item $(g \circ f)_\#(M) \cong (g_\# \circ f_\#)(M)$.
\end{enumerate}
\end{lemma}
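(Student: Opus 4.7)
The plan is to derive all three statements from standard formal properties of the tensor product, being careful only about bimodule structures so that the resulting isomorphisms are $S$-linear (or $T$-linear in (iii)), not merely abelian group isomorphisms.

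For (i), I would use the fact that $-\otimes_R -$ distributes over direct sums in the right-hand variable. Concretely, the map $S \otimes_R (M \oplus N) \to (S \otimes_R M) \oplus (S \otimes_R N)$ sending $s \otimes (m,n) \mapsto (s \otimes m, s \otimes n)$ is a well-defined abelian group homomorphism with obvious inverse assembled from the inclusions. Since left-multiplication by $S$ acts only on the $S$-factor, the map is $S$-linear.

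For (ii), I would define $\varphi : S \otimes_R R \to S$ by $s \otimes r \mapsto s f(r)$ and $\psi : S \to S \otimes_R R$ by $s \mapsto s \otimes 1$. The map $\varphi$ is well-defined since $s \cdot r \otimes 1 = sf(r) \otimes 1$ on one side and $s \otimes r = s \otimes r\cdot 1$ on the other, both going to $sf(r)$; $\varphi \circ \psi = \id_S$ is immediate, and $\psi \circ \varphi$ sends $s \otimes r = sf(r) \otimes 1$ back to itself using the bimodule relation $s \otimes r = sf(r) \otimes 1$ in the balanced tensor product. $S$-linearity on the left factor is clear.

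For (iii), the key input is associativity of the tensor product for bimodules: if $T$ is a $(T,S)$-bimodule, $S$ is an $(S,R)$-bimodule via $f$, and $M$ is a left $R$-module, then there is a natural isomorphism $T \otimes_S (S \otimes_R M) \cong (T \otimes_S S) \otimes_R M$ given by $t \otimes (s \otimes m) \mapsto (t \otimes s) \otimes m$. Composing with the isomorphism $T \otimes_S S \cong T$ from (ii) applied to $g$, and noting that the $R$-action on $T$ obtained this way factors as $t \cdot r = t g(f(r)) = t(g \circ f)(r)$, one identifies this with $T \otimes_R M = (g \circ f)_\#(M)$.

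None of these steps poses a genuine obstacle; the only thing to monitor is that the canonical maps respect the correct module structure (left $S$- or $T$-module) rather than just being isomorphisms of abelian groups, which follows in each case because left multiplication acts on the leftmost factor. I would therefore present the proof as a short paragraph citing the standard tensor product identities (for example \cite[p145]{Ma63}) and pointing out the module-theoretic compatibilities.
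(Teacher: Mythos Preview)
Your proposal is correct and matches the paper's approach exactly: the paper does not give a proof but simply remarks that the lemma ``follows from the standard properties of tensor products such as associativity \cite[p145]{Ma63}'', which is precisely the route you outline (distributivity of $\otimes$ over $\oplus$, the canonical isomorphism $S \otimes_R R \cong S$, and associativity of iterated tensor products), together with the same reference.
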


If $P(R)$ denotes the set of isomorphism classes of projective $R$-modules, then the first two properties show that $f_\#$ induces a map
$f_\# : P(R) \to P(S)$
which restricts to each stable class. 

Recall that, if $R$, $R_1$, $R_2$ and $R_0$ are rings, then a pullback diagram
\[
\mathcal{R} = 
\begin{tikzcd}
  R \arrow[r, "i_2"] \arrow[d, "i_1"] & R_2 \arrow[d,"j_2"] \\
  R_1 \arrow[r,"j_1"] & R_0
\end{tikzcd}
\]
is a Milnor square if either $j_1$ or $j_2$ are surjective. If $P_1 \in P(R_1)$, $P_2 \in P(R_2)$ are such that there is a $R_0$-module isomorphism $h: (j_1)_\#(P_1) \to (j_2)_\#(P_2)$, then define
	\[ M(P_1,P_2,h) = \{ (x,y) \in P_1 \times P_2 : h((j_1)_*(x))=(j_2)_*(y)\} \le P_1 \times P_2, \]
	which is an $R$-module where multiplication by $r \in R$ is given by $r \cdot (x,y) = ((i_1)_*(r)x,(i_2)_*(r)y)$. It was shown by Milnor that $M(P_1,P_2,h)$ is projective \cite[Theorem 2.1]{Mi71}. 
	Let $\Aut_{R}(P)$ denote the set of $R$-module automorphisms of an $R$-module $P$. The main result on Milnor squares is as follows. This is a consequence of the results in \cite[Section 2]{Mi71} and the precise statement can be found in \cite[Proposition 4.1]{Sw80}.

\begin{thm} \label{thm:milnor-square}
Suppose $\mathcal{R}$ is a Milnor square and $P_i \in P(R_i)$ for $i=0,1,2$ are such that $P_0 \cong (j_1)_\#(P_1) \cong (j_2)_\#(P_2)$ as $R_0$-modules. Then there is a one-to-one correspondence
\[ \Aut_{R_1}(P_1) \backslash \Aut_{R_0}(P_0) \slash \Aut_{R_2}(P_2) \leftrightarrow \{ P \in P(R) : (i_1)_\#(P) \cong P_1, (i_2)_\#(P) \cong P_2 \} \]
given by sending a coset $[h]$ to $M(P_1,P_2,h)$ for any representative $h$.
\end{thm}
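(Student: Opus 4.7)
The plan is to construct the correspondence explicitly in both directions and then verify that the induced equivalence relation on isomorphisms $h$ is exactly the double coset relation. Throughout I would work with the two extensions of scalars $(i_1)_\#$ and $(i_2)_\#$ applied to $M(P_1,P_2,h)$, and rely on the compatibility $j_1 \circ i_1 = j_2 \circ i_2$ together with Lemma \ref{lemma:basic-properties:ext-of-scalars}(iii).

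First, I would check that the map $[h] \mapsto M(P_1,P_2,h)$ is well-defined. Given $\alpha_i \in \Aut_{R_i}(P_i)$ and $h' = (j_2)_\#(\alpha_2) \circ h \circ (j_1)_\#(\alpha_1)^{-1}$, the formula $(x,y) \mapsto (\alpha_1(x),\alpha_2(y))$ restricts to an $R$-module isomorphism $M(P_1,P_2,h) \to M(P_1,P_2,h')$, as one sees by direct substitution into the defining equation $h((j_1)_*(x))=(j_2)_*(y)$.

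Next, I would address surjectivity, which I expect to be the main obstacle. Given $P \in P(R)$ with fixed isomorphisms $\varphi_i : (i_i)_\#(P) \xrightarrow{\cong} P_i$, Lemma \ref{lemma:basic-properties:ext-of-scalars}(iii) provides a canonical $R_0$-isomorphism $\tau_P : (j_1)_\#((i_1)_\#(P)) \to (j_2)_\#((i_2)_\#(P))$ coming from the identity $j_1 \circ i_1 = j_2 \circ i_2$. Setting
\[ h = (j_2)_\#(\varphi_2) \circ \tau_P \circ (j_1)_\#(\varphi_1)^{-1},\]
the map $\Phi_P : P \to P_1 \times P_2$ defined by $p \mapsto (\varphi_1((i_1)_*(p)),\varphi_2((i_2)_*(p)))$ lands in $M(P_1,P_2,h)$ by construction. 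To see $\Phi_P$ is an isomorphism, I would first treat the case $P=R^n$, where the identification $R = R_1 \times_{R_0} R_2$ directly gives $R^n \cong M(R_1^n, R_2^n, \mathrm{id})$, and then extend to general projectives by writing $P$ as a direct summand of $R^n$ and using naturality of $\Phi_{(-)}$ together with $(i_i)_\#$-compatibility of the splitting; this is exactly Milnor's patching argument \cite[Theorem 2.1]{Mi71}, and the key point is that $j_1$ or $j_2$ being surjective makes the pullback square behave well on free modules.

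Finally, for injectivity, suppose $\Phi : M(P_1,P_2,h) \to M(P_1,P_2,h')$ is an $R$-isomorphism. Applying $(i_1)_\#$ and $(i_2)_\#$ and using the computation above that $(i_i)_\#(M(P_1,P_2,h)) \cong P_i$ canonically (which drops out of the surjectivity step), I obtain automorphisms $\alpha_i \in \Aut_{R_i}(P_i)$. Tracing these back through the pair-of-components description of $\Phi$ and comparing the pullback conditions defining the two sides, the compatibility of $\Phi$ with the defining equations forces $h' = (j_2)_\#(\alpha_2) \circ h \circ (j_1)_\#(\alpha_1)^{-1}$, so $[h]=[h']$ in the double coset space. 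Combined with the previous two steps, this establishes the bijection.
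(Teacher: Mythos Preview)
The paper does not actually prove this theorem: it is stated as ``the main result on Milnor squares'' immediately after citing Milnor \cite[Theorem 2.1]{Mi71} for the projectivity of $M(P_1,P_2,h)$, and no proof is given. Your proposal is the standard argument for this classical result (well-definedness via the obvious componentwise isomorphism, surjectivity via Milnor patching reducing to the free case, injectivity by applying $(i_1)_\#$ and $(i_2)_\#$ to an isomorphism), and it is correct in outline; there is nothing to compare against in the paper itself.
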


Now suppose $\alpha \in \Aut(R)$. If $M$ is an $R$-module, define $M_{\alpha}$ as the $R$-module whose abelian group is that of $M$ but whose $R$-action is given by $r \cdot m = \alpha(r) m$ for $r \in R$ and $m \in M$. 
For example, if $R = \Z G$, then $\theta \in \Aut(G)$ induces a map $\theta \in \Aut(\Z G)$ and $M_\theta$ coincides with the definition given earlier. 

This is a special case of restriction of scalars, but can also be viewed as a part of extension of scalars as follows.

\begin{lemma} \label{lemma:EOS-for-endomorphisms}
Let $R$ be a ring and let $\alpha \in \Aut(R)$. If $M$ is an $R$-module, then there is an isomorphism of $R$-modules
\[ \psi: M_\alpha \to (\alpha^{-1})_\#(M)\]
given by sending $m \mapsto 1 \otimes m$.
\end{lemma}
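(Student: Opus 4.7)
The plan is to unpack both sides of the claimed isomorphism carefully and then exhibit an explicit inverse. By the definition of extension of scalars applied to $f = \alpha^{-1}: R \to R$, the module $(\alpha^{-1})_\#(M) = R \otimes_R M$ is built using $R$ as a right $R$-module via the twisted action $x \cdot r = x\alpha^{-1}(r)$, with left $R$-action given by $r \cdot (x \otimes m) = rx \otimes m$. The key identity coming from this twisted balancing is that, for any $r \in R$ and $m \in M$,
\[ r \otimes m = 1 \cdot \alpha^{-1}(\alpha(r)) \otimes m = 1 \otimes \alpha(r) m \]
in $(\alpha^{-1})_\#(M)$.

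First I would verify that $\psi: M_\alpha \to (\alpha^{-1})_\#(M)$, $m \mapsto 1 \otimes m$, is $R$-linear. For $r \in R$ and $m \in M_\alpha$, the target side gives $r \cdot \psi(m) = r \otimes m$, which by the displayed identity equals $1 \otimes \alpha(r) m = \psi(\alpha(r) m) = \psi(r \cdot m)$ since the $R$-action on $M_\alpha$ is precisely $r \cdot m = \alpha(r) m$.

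Next I would construct the inverse $\varphi: (\alpha^{-1})_\#(M) \to M_\alpha$ by $\varphi(r \otimes m) = \alpha(r) m$. Well-definedness on the tensor product amounts to checking the twisted balancing relation: for $r' \in R$, the map sends $x \alpha^{-1}(r') \otimes m$ to $\alpha(x\alpha^{-1}(r'))m = \alpha(x) r' m$, which agrees with the image $\alpha(x)(r'm)$ of $x \otimes r'm$. $R$-linearity into $M_\alpha$ follows since $\varphi(r \cdot (x \otimes m)) = \alpha(rx)m = \alpha(r)\alpha(x)m = r \cdot_{\alpha} \varphi(x \otimes m)$. Finally, $\varphi \circ \psi = \id$ is immediate, while $\psi \circ \varphi(r \otimes m) = 1 \otimes \alpha(r) m = r \otimes m$ by the twisted balancing identity above.

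There is no real obstacle here beyond keeping track of where $\alpha$ versus $\alpha^{-1}$ appears; the only subtle point is recognising that the twisted right-$R$-action on the first tensor factor is precisely what converts the $\alpha$-twisted left action on $M_\alpha$ into the untwisted left action on $R \otimes_R M$.
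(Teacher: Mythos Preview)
Your proof is correct; every step of the verification---$R$-linearity of $\psi$, well-definedness and $R$-linearity of the inverse $\varphi$, and the two compositions---checks out with the right placement of $\alpha$ versus $\alpha^{-1}$. The paper itself states this lemma without proof, treating it as an elementary unwinding of definitions, so your argument is exactly the natural verification the reader is expected to supply.
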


From this, it is clear that this action has basic properties which are analogous to Lemma \ref{lemma:theta-basic-facts}.
The following is then immediate by combining Lemmas \ref{lemma:basic-properties:ext-of-scalars} and \ref{lemma:EOS-for-endomorphisms}.

\begin{corollary} \label{cor:ext-of-scalars-alpha-action}
Suppose $f: R \to S$ is a ring homomorphism and $\alpha \in \Aut(R)$, $\beta \in \Aut(S)$ are such that $f \circ \alpha = \beta \circ f$. If $M$ is an $R$-module, then
\[ f_\#(M_{\alpha}) \cong f_\#(M)_{\beta}.\]
\end{corollary}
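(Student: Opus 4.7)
The plan is to bootstrap from the two preceding lemmas. Lemma \ref{lemma:EOS-for-endomorphisms} tells us that the twisted-action construction $M \mapsto M_\alpha$ is itself a special case of extension of scalars, namely $M_\alpha \cong (\alpha^{-1})_\#(M)$ as $R$-modules; and analogously $f_\#(M)_\beta \cong (\beta^{-1})_\#(f_\#(M))$ as $S$-modules. So the corollary reduces to the identity
\[ f_\#\bigl((\alpha^{-1})_\#(M)\bigr) \;\cong\; (\beta^{-1})_\#\bigl(f_\#(M)\bigr). \]

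First I would apply Lemma \ref{lemma:basic-properties:ext-of-scalars}(iii) to both sides to collapse the iterated extensions of scalars into single ones: the left side becomes $(f \circ \alpha^{-1})_\#(M)$ and the right side becomes $(\beta^{-1} \circ f)_\#(M)$. At this stage the result will follow purely from an equality of ring homomorphisms $R \to S$. The hypothesis $f \circ \alpha = \beta \circ f$ is equivalent, after composing on both sides with $\alpha^{-1}$ on the right and $\beta^{-1}$ on the left, to $\beta^{-1} \circ f = f \circ \alpha^{-1}$, so the two compositions coincide and the isomorphism is immediate.

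There is essentially no obstacle here: the content of the statement is packaged into Lemmas \ref{lemma:basic-properties:ext-of-scalars} and \ref{lemma:EOS-for-endomorphisms} together with the intertwining hypothesis. The only mildly delicate point is bookkeeping of sides (note that $M_\alpha$ involves $\alpha^{-1}$ under the identification of Lemma \ref{lemma:EOS-for-endomorphisms}, which is exactly why the hypothesis in the form $f\circ\alpha=\beta\circ f$ rearranges to $f\circ\alpha^{-1}=\beta^{-1}\circ f$ in the precise shape needed). One could instead give a direct proof by writing down the explicit isomorphism $S \otimes_R M_\alpha \to (S \otimes_R M)_\beta$, $s \otimes m \mapsto s \otimes m$, and checking $R$-bilinearity using $f \circ \alpha = \beta \circ f$; but routing through the two lemmas is cleaner and underscores that the identity is a functorial consequence of associativity of tensor product.
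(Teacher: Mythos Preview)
Your proposal is correct and matches the paper's approach exactly: the paper simply states that the corollary is immediate by combining Lemmas \ref{lemma:basic-properties:ext-of-scalars} and \ref{lemma:EOS-for-endomorphisms}, and you have spelled out precisely how that combination works, including the bookkeeping with $\alpha^{-1}$ and $\beta^{-1}$.
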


We can turn the set of Milnor squares into a category with morphisms defined as follows. If $\mathcal{R}$, $\mathcal{R}'$ are Milnor squares, then a morphism is a quadruple
\[ \hat{\alpha} =(\alpha,\alpha_1,\alpha_2,\alpha_0): \mathcal{R} \to \mathcal{R}' \]
where $\alpha: R \to R'$ and $\alpha_i: R_i \to R_i'$ such that there is a commutative diagram as follows
\[
    \begin{tikzcd}[row sep=1.5em, column sep = 1.5em]
    R \arrow[rr] \arrow[dr,"\alpha"] \arrow[dd,swap] &&
    R_2 \arrow[dd] \arrow[dr,"\alpha_2"] \\
    & R \arrow[rr] \arrow[dd] &&
    R_2 \arrow[dd] \\
    R_1 \arrow[rr,] \arrow[dr, "\alpha_1"] && R_0 \arrow[dr, "\alpha_0"] \\
    & R_1 \arrow[rr]&& R_0
    \end{tikzcd}
\]

Let $\Aut(\mathcal{R})$ denote the set of automorphisms of a Milnor square $\mathcal{R}$, i.e. the set of isomorphisms $\hat{\alpha} : \mathcal{R} \to \mathcal{R}$.

\begin{lemma} \label{lemma:milnor-action}
Let $\mathcal{R}$ be a Milnor square, let $P_1 \in P(R_1)$, $P_2 \in P(R_2)$ be such that there is an $R_0$-module isomorphism $h: (j_1)_\#(P_1) \to (j_2)_\#(P_2)$. If $\hat{\alpha} =(\alpha,\alpha_1,\alpha_2,\alpha_0) \in \Aut(\mathcal{R})$, then
\[M(P_1,P_2,h)_\alpha \cong M((P_1)_{\alpha_1},(P_2)_{\alpha_2},h) \]	
where, on the right, we view $h$ as a map $h: (j_1)_\#(P_1)_{\alpha_0} \to (j_2)_\#(P_2)_{\alpha_0}$.
\end{lemma}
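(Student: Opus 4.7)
The plan is to exhibit an $R$-module isomorphism $\Phi : M(P_1,P_2,h)_\alpha \to M((P_1)_{\alpha_1},(P_2)_{\alpha_2},h)$ whose underlying map of sets is simply the identity $(x,y) \mapsto (x,y)$. Since the twisted modules $(P_j)_{\alpha_j}$ have the same underlying abelian group as $P_j$, both sides sit inside the same product $P_1 \times P_2$, so this is at least a candidate set-theoretic map. The work is to check that the submodule condition coincides and that the $R$-actions agree.

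For the submodule condition, I would use Corollary \ref{cor:ext-of-scalars-alpha-action}: the commutativity $j_i \circ \alpha_i = \alpha_0 \circ j_i$ coming from the morphism $\hat{\alpha}$ of Milnor squares gives natural identifications $(j_i)_\#((P_i)_{\alpha_i}) \cong (j_i)_\#(P_i)_{\alpha_0}$ for $i=1,2$. The map $h$ viewed between the $\alpha_0$-twisted modules is literally the same map of abelian groups as the original $h$, and $R_0$-linearity is preserved because both source and target have been twisted identically by $\alpha_0$. Hence under these identifications, the condition $h((j_1)_*(x)) = (j_2)_*(y)$ defining the right-hand side becomes exactly the same equation of abelian group elements as the condition defining the left-hand side, so $\Phi$ restricts to a bijection between the two submodules.

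For $R$-linearity, let $r \in R$. On $M(P_1,P_2,h)_\alpha$ the twisted action is
\[ r \cdot_\alpha (x,y) = \alpha(r) \cdot (x,y) = \bigl(i_1(\alpha(r))\, x,\, i_2(\alpha(r))\, y\bigr), \]
while on $M((P_1)_{\alpha_1},(P_2)_{\alpha_2},h)$ the action is
\[ r \cdot (x,y) = \bigl(i_1(r) \cdot_{\alpha_1} x,\, i_2(r) \cdot_{\alpha_2} y\bigr) = \bigl(\alpha_1(i_1(r))\, x,\, \alpha_2(i_2(r))\, y\bigr). \]
The relations $\alpha_j \circ i_j = i_j \circ \alpha$ built into the definition of a morphism of Milnor squares make these two expressions equal, so $\Phi$ is $R$-linear, and since it is the identity on underlying sets it is a bijection.

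The main obstacle is bookkeeping the various identifications in the first paragraph: one must verify that the abelian-group-theoretic condition defining $M(-,-,h)$ really is preserved under the twist, which requires invoking Corollary \ref{cor:ext-of-scalars-alpha-action} twice and confirming that $h$ remains an $R_0$-linear isomorphism after twisting both sides by $\alpha_0$. Once this identification is in place, the $R$-linearity check is nothing more than the commutativity encoded in the definition of a Milnor square automorphism, so I expect no further technical difficulty.
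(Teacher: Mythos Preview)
Your proof is correct. The approach differs slightly from the paper's: you write down the identity map on the underlying set $P_1 \times P_2$ and verify directly, using the relations $\alpha_k \circ i_k = i_k \circ \alpha$ and Corollary~\ref{cor:ext-of-scalars-alpha-action}, that the submodule conditions and $R$-actions match. The paper instead invokes Theorem~\ref{thm:milnor-square} to identify $(i_k)_\#(P) \cong P_k$ for $P = M(P_1,P_2,h)$, asserts a natural isomorphism $M((i_1)_\#(P),(i_2)_\#(P),h) \to M((i_1)_\#(P_\alpha),(i_2)_\#(P_\alpha),h)$, and then applies Corollary~\ref{cor:ext-of-scalars-alpha-action} to rewrite $(i_k)_\#(P_\alpha)$ as $(P_k)_{\alpha_k}$. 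Your route is more elementary in that it avoids the Milnor correspondence entirely and makes the isomorphism completely explicit; the paper's route is terser but leaves the ``natural map'' and its compatibility with the $\alpha$-twist to the reader. Both ultimately rest on the same commutativity data in $\hat\alpha$ and on Corollary~\ref{cor:ext-of-scalars-alpha-action}.
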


\begin{proof}
Let $P = M(P_1,P_2,h)$ so that, by Theorem \ref{thm:milnor-square}, we have that $(i_1)_\#(P) \cong P_1$ and $(i_2)_\#(P) \cong P_2$. It is easy to see directly that the natural map
\[ M((i_1)_\#(P), (i_2)_\#(P), h) \to  M((i_1)_\#(P_\alpha), (i_2)_\#(P_\alpha), h)\]
is an isomorphism. We are then done by applying Corollary \ref{cor:ext-of-scalars-alpha-action}.
\end{proof}

This has the following simplification when $P_1$ and $P_2$ are free of rank one. Here we will use the identification $\Aut_{R_0}(R_0) \cong R_0^\times$ which sends $h:R_0 \to R_0$ to $h(1) \in R_0^\times$.

\begin{lemma} \label{lemma:milnor-free-case}
Let $\mathcal{R}$ be a Milnor square and let $u \in R_0^\times$. If $\hat{\alpha} =(\alpha,\alpha_1,\alpha_2,\alpha_0) \in \Aut(\mathcal{R})$, then
\[M(R_1,R_2,u)_\alpha \cong M(R_1,R_2,\alpha_0^{-1}(u)).\]	
\end{lemma}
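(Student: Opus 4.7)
The plan is to construct an explicit $R$-module isomorphism directly, rather than invoking the previous lemma. Recall that under the identification $\Aut_{R_0}(R_0) \cong R_0^\times$, a left $R_0$-linear map with $h(1)=u$ sends $r \mapsto ru$, so the Milnor patch unpacks as $M(R_1,R_2,u) = \{(x,y) \in R_1 \times R_2 : j_1(x)\,u = j_2(y)\}$, viewed as an $R$-submodule of $R_1 \times R_2$ with action $r \cdot (x,y) = (i_1(r)x,\, i_2(r)y)$. I would define
\[ \Phi : M(R_1,R_2,u)_\alpha \longrightarrow M(R_1,R_2,\alpha_0^{-1}(u)), \qquad (x,y) \longmapsto (\alpha_1^{-1}(x),\, \alpha_2^{-1}(y)) \]
and verify it is an $R$-module isomorphism. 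The two compatibility relations coming from $\hat\alpha \in \Aut(\mathcal R)$, namely $i_k \circ \alpha = \alpha_k \circ i_k$ and $j_k \circ \alpha_k = \alpha_0 \circ j_k$ for $k=1,2$, together with the fact that each $\alpha_\ast$ is a ring homomorphism, drive the entire verification.

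First I would check well-definedness: applying $\alpha_0^{-1}$ to the relation $j_1(x)\,u = j_2(y)$ and using $\alpha_0^{-1} \circ j_k = j_k \circ \alpha_k^{-1}$ yields $j_1(\alpha_1^{-1}(x))\,\alpha_0^{-1}(u) = j_2(\alpha_2^{-1}(y))$, which is precisely the defining relation of $M(R_1,R_2,\alpha_0^{-1}(u))$. Next, for $R$-linearity: on the source the twisted action rewrites as $r \cdot_\alpha (x,y) = (i_1(\alpha(r))x,\, i_2(\alpha(r))y) = (\alpha_1(i_1(r))x,\, \alpha_2(i_2(r))y)$ via the first compatibility, and pushing this through $\Phi$ using multiplicativity of $\alpha_k^{-1}$ returns $(i_1(r)\alpha_1^{-1}(x),\, i_2(r)\alpha_2^{-1}(y)) = r \cdot \Phi(x,y)$. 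Bijectivity is immediate since $(x,y) \mapsto (\alpha_1(x),\alpha_2(y))$ is a two-sided inverse.

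An alternative route is to invoke Lemma \ref{lemma:milnor-action} with $P_k = R_k$ and then identify $(R_k)_{\alpha_k} \cong R_k$ via the $R_k$-module isomorphism $r \mapsto \alpha_k(r)$; one then has to track how the patching datum $u$ transforms under the induced identifications on $(j_k)_\#((R_k)_{\alpha_k})$, which after a short tensor-product computation produces $\alpha_0^{-1}(u)$. This is longer and the bookkeeping is more delicate, so I would prefer the direct approach above. The only real obstacle in either approach is fixing conventions carefully: which side $u$ multiplies on under $\Aut_{R_0}(R_0) \cong R_0^\times$, and the direction of the relation $j_k \circ \alpha_k = \alpha_0 \circ j_k$, which jointly determine whether the answer is $\alpha_0^{-1}(u)$ or $\alpha_0(u)$. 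Once those are pinned down, the proof reduces to a routine definition chase.
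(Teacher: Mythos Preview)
Your direct construction is correct: the map $\Phi(x,y)=(\alpha_1^{-1}(x),\alpha_2^{-1}(y))$ is well-defined, $R$-linear, and bijective exactly as you verify, and your unpacking of $M(R_1,R_2,u)$ as $\{(x,y):j_1(x)u=j_2(y)\}$ is consistent with the paper's conventions.

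The paper, however, takes precisely what you describe as the alternative route. It first invokes Lemma~\ref{lemma:milnor-action} to get $M(R_1,R_2,h)_\alpha \cong M((R_1)_{\alpha_1},(R_2)_{\alpha_2},h)$, then passes through the $R_k$-module isomorphisms $c_k:R_k\to(R_k)_{\alpha_k}$ sending $1\mapsto 1$ (which, as you note, is $r\mapsto\alpha_k(r)$ on underlying sets), and tracks how the patching datum transforms through a pair of commutative diagrams involving the tensor identifications $(j_k)_\#((R_k)_{\alpha_k})\cong((j_k)_\#(R_k))_{\alpha_0}$ from Corollary~\ref{cor:ext-of-scalars-alpha-action}. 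Your approach is more economical for this specific statement: it sidesteps tensor products and Lemma~\ref{lemma:milnor-action} entirely, relying only on the two square-commutativity relations and multiplicativity of the $\alpha_k$. The paper's approach has the structural virtue of exhibiting the free-rank-one case as a specialisation of the general Lemma~\ref{lemma:milnor-action}, so the conceptual content is concentrated there and only the identification step remains; but that identification step is where the delicate bookkeeping you flag actually lives, so for this lemma in isolation your direct argument is cleaner.
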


\begin{proof}
Fix identifications $\psi_i : (j_i)_\#(R_i) \to R_0$ and let $h : (j_1)_\#(R_1) \to (j_1)_\#(R_1)$ be such that $(\psi_2 \circ h \circ \psi_1^{-1})(1)=u \in R_0^\times$. By Lemma \ref{lemma:milnor-action}, we have that 
\[ M(R_1,R_2,h)_\alpha \cong M((R_1)_{\alpha_1},(R_2)_{\alpha_2},h)\] 
where $h : ((j_1)_\#(R_1))_{\alpha_0} \to ((j_1)_\#(R_1))_{\alpha_0}$ coincides with $h$ as a map of abelian groups.
For $i=0,1,2$, let $c_i : R_i \to (R_i)_{\alpha_i}$ be the isomorphism which sends $1 \mapsto 1$. Then it is easy to see that the following diagram commutes for $i=1,2$, where $f: (j_i)_\#((R_i)_{\alpha_i}) \to ((j_i)_\#(R_i))_{\alpha_0}$ is the isomorphism coming from Corollary \ref{cor:ext-of-scalars-alpha-action}
\[
\begin{tikzcd}
(j_i)_\#(R_i) \ar[r,"1\otimes c_i"] \ar[d,"\psi_i"] & ((j_i)_\#((R_i)_{\alpha_i}) \ar[r,"f"] & ((j_i)_\#(R_i))_{\alpha_0} \ar[d,"\psi_i"] \\
R_0 \ar[rr,"c_0"] && (R_0)_{\alpha_0}
\end{tikzcd}
\]
Using the isomorphisms $c_i$ for $i=1,2$, we get that
\[ M((R_1)_{\alpha_1},(R_2)_{\alpha_2},h) \cong M(R_1,R_2,h_0)\]
where $h_0 : (j_1)_\#(R_1) \to (j_2)_\#(R_2)$ induces $h : ((j_1)_\#(R_1))_{\alpha_0} \to ((j_1)_\#(R_1))_{\alpha_0}$ via $f \circ (1 \otimes c_i)$. Let $u_0 = (\psi_2 \circ h_0 \circ \psi_1^{-1})(1) \in R_0^\times$. Then, since the above diagram commutes, we get the following commutative diagram
\[ 
\begin{tikzcd}
	R_0 \ar[r,"\psi_2 \circ h \circ \psi_1^{-1}"] \ar[d,"c_0"] & R_0 \ar[d,"c_0"]\\
	(R_0)_{\alpha_0} \ar[r,"\psi_2 \circ h_0 \circ \psi_1^{-1}"] & (R_0)_{\alpha_0}
\end{tikzcd}
\quad
\begin{tikzcd}
1 \ar[r,mapsto] \ar[d,mapsto] & u_0 \ar[d,mapsto] \\
1 \ar[r,mapsto] & \alpha_0(u_0)
\end{tikzcd}
\]
This implies that $u=\alpha_0(u_0)$ and so $u_0 = \alpha_0^{-1}(u)$, as required.
\end{proof}

If $\mathcal{R}$ is a Milnor square, we say that $\alpha \in \Aut(R)$ \textit{extends across $\mathcal{R}$} if there exists $\hat{\alpha} = (\alpha, \alpha_1,\alpha_2,\alpha_0) \in \Aut(\mathcal{R})$. 
The following gives conditions under which this induced map is unique.

\begin{lemma} \label{lemma:aut-unique}
Let $\mathcal{R}$ be a pullback square with all maps surjective. If $\alpha \in \Aut(R)$ extends across $\mathcal{R}$, then it does so uniquely. That is, there exist unique maps $\alpha_1$, $\alpha_2$, $\alpha_0$ for which $\hat{\alpha} = (\alpha, \alpha_1,\alpha_2,\alpha_0) \in \Aut(\mathcal{R})$.
\end{lemma}

\begin{proof}
This follows from the simple observation that, if $f: R \twoheadrightarrow S$ is a surjective ring homomorphism and $\alpha: R \to R$ and $\beta_1, \beta_2 : S \to S$ are ring homomorphisms such that $f \circ \alpha = \beta_i \circ f$ for $i=1,2$, then $\beta_1 = \beta_2$. To see this, note that the conditions imply that $(\beta_1-\beta_2) \circ f = 0$ and so $\beta_1 = \beta_2$ on $\IM(f)$. Since $f$ is surjective, $\IM(f) = S$ and so $\beta_1=\beta_2$.
\end{proof}

We conclude this section with the following result which is a consequence of Theorem \ref{thm:milnor-square}, Lemma \ref{lemma:milnor-free-case} and Lemma \ref{lemma:aut-unique}.

\begin{prop} \label{prop:milnor-square-action}
Let $\mathcal{R}$ be a pullback square with all maps surjective and such that every $\alpha \in \Aut(R)$ extends across $\mathcal{R}$. Then there is a one-to-one correspondence
\[ R_1^\times \backslash (R_0^\times /\Aut(R)) \slash R_2^\times \leftrightarrow \{ P \in P(R) : (i_1)_\#(P) \cong R_1, (i_2)_\#(P) \cong R_2 \}/\Aut(R) \]
where $\alpha \in \Aut(R)$ acts on $R_0^\times$ by sending $r \mapsto \alpha_0^{-1}(r)$ for $r \in R_0^\times$ and where $\alpha_0 \in \Aut(R_0)$ is the unique automorphism such that $\hat{\alpha} = (\alpha, \alpha_1,\alpha_2,\alpha_0) \in \Aut(\mathcal{R})$.
\end{prop}

%%%%%%%%%%
\section{Example: Quaternion Groups} 
\label{section:examples}

The aim of this section will be to illustrate how Theorems \ref{thm:main-topological-I} and \ref{thm:main-topological-II} can be combined with the known techniques to classify projective $\Z G$-modules to obtain a detailed classification of finite $(G,n)$-complexes up to homotopy equivalence.

For $k \ge 2$, recall that the quaternion group of order $4k$ has presentation:
\[ Q_{4k} = \langle x, y \mid x^{k}=y^2, yxy^{-1}=x^{-1}\rangle. \]
It is a finite $3$-manifold group and so has free period $4$. For $n \ge 2$ even, Theorem \ref{thm:main-topological-I} and Proposition \ref{prop:pht-to-alg} imply that $\PHT(Q_{4k},n) \cong [\Z Q_{4k}] = \bigcup_{r \ge 1} \SF_r(\Z Q_{4k})$ where $\SF_r(\Z Q_{4k})$ is the set of stably free $\Z Q_{4k}$-modules of rank $r \ge 1$. 

Since stably free $\Z G$-modules of rank $\ge 2$ are free for $G$ finite \cite{Sw60b} (or since $\PHT(G,n)$ is a fork by Corollary \ref{cor:fork}), it remains to compute $\SF_1(\Z Q_{4k})$. This was completed by Swan for $k \le 9$ \cite[Theorem III]{Sw83}. For $k \le 7$, he showed that $|\SF_1(\Z Q_{4k})| = 1$ for $2 \le k \le 5$, $|\SF_1(\Z Q_{24})|=3$ and $|\SF_1(\Z Q_{28})|=2$.
It also follows from his classification that $\Z Q_{4k}$ has weak cancellation in all these cases and so the action of $\theta \in \Aut(Q_{4k})$ on $[\Z Q_{4k}]$ sends $P \mapsto P_\theta$ (see Section \ref{section:action-of-Aut(G)}).

In the case $Q_{28}$, the action of $\Aut(Q_{28})$ on $[\Z Q_{28}]$ is trivial since $(\Z Q_{28})_\theta \cong \Z Q_{28}$ for all $\theta \in \Aut(Q_{28})$ and so this must also hold for the non-free stably free module also. The main result of this section will be to compute the action in the case $Q_{24}$.

\begin{thm} \label{thm:main-example}
$\Aut(Q_{24})$ acts non-trivially on $[\Z Q_{24}]$. More specifically, we have $|\SF_1(\Z Q_{24})| = 3$ and $|\SF_1(\Z Q_{24})/\Aut(Q_{24})| = 2$. 
\end{thm}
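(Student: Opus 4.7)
The equality $|\SF_1(\Z Q_{24})| = 3$ is due to Swan \cite[Theorem III]{Sw83}, so the task reduces to showing that $\Aut(Q_{24})$ has exactly two orbits on these three stably free modules, one of size $1$ and one of size $2$. Since $\Z Q_{24}$ has weak cancellation (as discussed in Section \ref{section:action-of-Aut(G)}), the induced action on $[\Z Q_{24}]$ simplifies to $P \mapsto P_\theta$, and the natural tool is the Milnor square machinery of Section \ref{section:Milnor-squares}.

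The plan is to use the Milnor square arising from the central idempotents $\tfrac{1}{2}(1\pm y^2) \in \Q Q_{24}$,
\[ \mathcal{R}: \quad \begin{tikzcd}[row sep=small, column sep=small] \Z Q_{24} \ar[r] \ar[d] & \Lambda \ar[d] \\ \Z D_{12} \ar[r] & \F_2 D_{12} \end{tikzcd} \]
where $\Lambda = \Z Q_{24}/(1+y^2)$ is a quaternion $\Z$-order and $D_{12} = Q_{24}/\langle y^2 \rangle$ is the dihedral group of order $12$ (the quotient by the centre $Z(Q_{24})=\{1,y^2\}$). The reduction $\Lambda \twoheadrightarrow \F_2 D_{12}$ is surjective, so $\mathcal{R}$ is indeed a Milnor square. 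First I would apply Theorem \ref{thm:milnor-square} to identify $\SF_1(\Z Q_{24})$ with a set of double cosets in $(\Z D_{12})^\times \backslash (\F_2 D_{12})^\times / \Lambda^\times$ realising $\Z Q_{24}$ up to stable isomorphism, and verify that Swan's three stably free modules from \cite[Theorem III]{Sw83} correspond to three distinct such cosets.

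Next, since $y^2$ is the unique element of order $2$ in $Q_{24}$, every $\theta \in \Aut(Q_{24})$ fixes the centre $Z(Q_{24})$ element-wise. Hence each such $\theta$ extends across $\mathcal{R}$, yielding compatible induced automorphisms $\alpha_1 \in \Aut(\Z D_{12})$, $\alpha_2 \in \Aut(\Lambda)$ and $\alpha_0 \in \Aut(\F_2 D_{12})$. Corollary \ref{cor:milnor-square-action} then supplies a bijection
\[ \SF_1(\Z Q_{24})/\Aut(Q_{24}) \; \leftrightarrow \; (\Z D_{12})^\times \big\backslash \bigl((\F_2 D_{12})^\times / \Aut(Q_{24})\bigr) \big/ \Lambda^\times, \]
where $\theta$ acts on $(\F_2 D_{12})^\times$ via $\alpha_0^{-1}$.

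The final step is to make this action explicit: using the parameterisation $\theta_{i,j}: x \mapsto x^i, y \mapsto x^j y$ of $\Aut(Q_{24})$ from Section \ref{section:action-of-Aut(G)}, I would exhibit a specific $\theta_{i,j}$ whose induced action on $(\F_2 D_{12})^\times$ sends one of Swan's three double-coset representatives to another while fixing the third. The main obstacle lies precisely here: one has to reconcile the Milnor-square parameterisation with Swan's explicit construction in \cite{Sw83}, and then perform a unit-level calculation in the small but non-commutative ring $\F_2 D_{12}$ to detect the non-trivial identification. This is in principle a finite verification, but the bookkeeping between Swan's presentation of the three stably free classes and their images in $(\F_2 D_{12})^\times$ is where the real work lies.
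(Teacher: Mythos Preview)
Your Milnor square $\mathcal{R}$ is correct (indeed $1+y^2 = 1+x^6$, so your $\Lambda$ coincides with the paper's $\Lambda = \Z Q_{24}/(x^6+1)$), but the identification of $\SF_1(\Z Q_{24})$ with a subset of the double cosets $(\Z D_{12})^\times \backslash (\F_2 D_{12})^\times / \Lambda^\times$ fails. Theorem~\ref{thm:milnor-square} with $P_1 = \Z D_{12}$ and $P_2 = \Lambda$ only parametrises those $P \in P(\Z Q_{24})$ for which $(i_2)_\#(P) \cong \Lambda$ is \emph{free}. However, Swan's argument shows that the map $(i_2)_\# : \SF_1(\Z Q_{24}) \to \SF_1(\Lambda)$ is a bijection, and $|\SF_1(\Lambda)| = 3$. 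Thus two of the three stably free $\Z Q_{24}$-modules of rank one are sent to \emph{non-free} stably free $\Lambda$-modules, and so do not appear in your double coset space at all. The only module you can see this way is $\Z Q_{24}$ itself, and the action of $\Aut(Q_{24})$ on it is visibly trivial. To recover the other two you would have to take $P_2$ to be a non-free stably free $\Lambda$-module and compute $\Aut_\Lambda(P_2)$, which presupposes exactly the classification you are trying to establish.

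The paper avoids this by working one level deeper: it first passes to $\Lambda$ via the bijection $\SF_1(\Z Q_{24}) \cong \SF_1(\Lambda)$, and then uses a Milnor square for $\Lambda$ with corners $\H_\Z$ and $\Z[\zeta_{12},j]$ over $\H_{\F_3}$. The crucial point is that both $\H_\Z$ and $\Z[\zeta_{12},j]$ have stably free cancellation, so every rank-one stably free $\Lambda$-module does become free over the corners, and hence $\SF_1(\Lambda)$ is genuinely the double coset space $\H_\Z^\times \backslash \H_{\F_3}^\times / \Z[\zeta_{12},j]^\times = \{[1],[1+j],[1+k]\}$. The $\Aut(Q_{24})$-action is then read off directly on $\H_{\F_3}^\times$: the automorphism $\theta_{a,b}$ with $b$ even swaps $[1+j]$ and $[1+k]$, giving the two orbits. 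Your strategy of extending $\theta$ across a Milnor square and invoking Corollary~\ref{cor:milnor-square-action} is exactly right; the issue is purely the choice of square.
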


All this can be summarised in the following table, which gives the structure of $\PHT(G,n)$ and $\HT(G,n)$ when $n \ne 2$ is even. These graded trees are both forks by Corollary \ref{cor:fork} and each dot represents a finite $(G,n)$-complex at the minimal level.

\begin{figure}[h]
\normalfont
\begin{tabular}{|c|c|c|c|c|c|c|}
 \hline 
$G$ & $Q_8$ & $Q_{12}$ & $Q_{16}$ & $Q_{20}$ & $Q_{24}$ & $Q_{28}$ \\
 \hline
  $\PHT(G,n)$ & $\bullet$ & $\bullet$ & $\bullet$ & $\bullet$ & $\bullet$ $\bullet$ $\bullet$ & $\bullet$ $\bullet$ \\
 \hline
 $\HT(G,n)$ & $\bullet$ & $\bullet$ & $\bullet$ & $\bullet$ & $\bullet$ $\bullet$ & $\bullet$ $\bullet$ \\
  \hline
\end{tabular}
\caption{Minimal complexes for any $n$ even with $n \ne 2$} \vspace{-2mm}
\label{figure:quaternion-table}
\end{figure}

\begin{remark}
This also holds in the case $n=2$ provided $G$ has the D2 property. This holds trivially in the cases $Q_8, Q_{12}, Q_{16}, Q_{20}$ and is otherwise only known to be true in the case $Q_{28}$ by \cite[Theorem 7.7]{Ni19} using the presentation of Mannan-Popiel \cite{MP19}.
\end{remark}

We will now proceed to the proof of Theorem \ref{thm:main-example}. First let $x$ and $y$ be generators for $Q_{24}$ in the presentation given above. Let $\Lambda = \Z Q_{24}/(x^6+1)$ and note that the quotient
map $f : \Z Q_{24} \twoheadrightarrow \Lambda$ induces a map
\[ f_\# : \SF_1(\Z Q_{24}) \to \SF_1(\Lambda)\]
by Lemma \ref{lemma:basic-properties:ext-of-scalars}. This is a bijection by the proof of \cite[Theorem 11.14]{Sw83}.

Now note that the factorisation $x^6+1 = (x^2+1)(x^4-x^2+1)$ implies that the ideals $I = (x^2+1)$ and $J = (x^4-x^2+1)$ have $I \cap J = (x^6+1)$ and $I + J = (3,x^2+1)$.
It follows from \cite[Example 42.3]{CR87} that we have a pullback diagram
\[
\begin{tikzcd}
	\Lambda \ar[r] \ar[d] & \Z Q_{24}/(x^4-x^2+1) \ar[d]\\
	\Z Q_{24}/(x^2+1) \ar[r] & \F_3 Q_{24}/(x^2+1)
\end{tikzcd}
\]
which is a Milnor square since all maps are surjective.

For a field $\F$, let $\H_{\F} = \F[i,j]$ denote the quaternions over $\F$ and we define $\H_{\Z}=\Z[i,j]$ and $\Z[\zeta_{12},j]$ to be subrings of $\H_{\R}$, where $\zeta_{12} = e^{\frac{2\pi i}{12}}$ is the 12th root of unity in the $i$ direction. The following is straightforward to check that there are isomorphisms of rings
\begin{align*} \phi_1: \H_{\Z} &\to \Z Q_{24} / (x^2+1), & \phi_2 : \Z[\zeta_{12},j] &\to \Z Q_{24} / (x^4-x^2+1) \\
i \mapsto &\, x, \, j \mapsto y & \zeta_{12} \mapsto &\, x, \, j \mapsto y.\end{align*}

Using this, we can rewrite the Milnor square above as follows
\[
\mathcal{R} = 
\begin{tikzcd}
	\Lambda \ar[r,"i_2"] \ar[d,"i_1"] & \Z[\zeta_{12},j] \ar[d,"j_2"]\\
	\H_{\Z} \ar[r,"j_1"] & \H_{\F_3}
\end{tikzcd}
\quad
\begin{tikzcd}
	x,y \ar[r,mapsto] \ar[d,mapsto] & \zeta_{12},j \ar[d,mapsto]\\
	i,j \ar[r,mapsto] & i,j
\end{tikzcd}
\]
Now note that, by \cite[Lemma 8.14]{Sw83}, the induced map $(i_2)_*: C(\Lambda) \to C(\Z[\zeta_{12},j])$ is an isomorphism. It also follows from \cite[p84]{Sw83} that the rings $\H_{\Z}$ and $\Z[\zeta_{12},j]$ both have stably free cancellation, i.e. that every stably free module is free. It follows easily that
\[ \SF_1(\Lambda) = \{P \in P(\Lambda) : (i_1)_\#(P) \cong \H_{\Z}, (i_2)_\#(P) \cong \Z[\zeta_{12},j]\}. \]
In particular, by combining with Theorem \ref{thm:milnor-square}, we get that there is a bijection
\[ \SF_1(\Lambda) \leftrightarrow \H_{\Z}^\times \backslash \H_{\F_3}^\times \slash \Z[\zeta_{12},j]^\times.\]

\begin{lemma} \label{lemma:double-coset-calc}
$\H_{\Z}^\times \backslash \H_{\F_3}^\times \slash \Z[\zeta_{12},j]^\times = \{[1],[1+j],[1+k] \}$.
\end{lemma}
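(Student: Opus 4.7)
The plan is to combine Swan's cardinality computation with an explicit verification of three distinct orbits under a convenient subgroup of the full right-action group. From the chain of bijections $\SF_1(\Z Q_{24}) \cong \SF_1(\Lambda) \leftrightarrow \H_{\Z}^\times \backslash \H_{\F_3}^\times \slash \Z[\zeta_{12},j]^\times$ established above via $f_\#$ and Theorem \ref{thm:milnor-square}, together with Swan's result $|\SF_1(\Z Q_{24})| = 3$ cited at the start of the section, we know the double coset space has exactly three elements; hence it suffices to exhibit three pairwise distinct representatives.

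First I would pin down a useful subgroup $\mathcal{H} \le \H_{\F_3}^\times$ in the image of $\Z[\zeta_{12},j]^\times$ under $j_2$. The group $\Z[\zeta_{12},j]^\times$ contains both $Q_{24} = \langle \zeta_{12}, j\rangle$ and the cyclotomic unit $1 - \zeta_{12}$, which is a unit of $\Z[\zeta_{12}]$ since $\Phi_{12}(1) = 1$. Under $j_2$ the image of $Q_{24}$ is the standard $Q_8 = \{\pm 1, \pm i, \pm j, \pm k\}$ (as $\zeta_{12} \mapsto i$ has order $4$ in $\H_{\F_3}$), and $1 - \zeta_{12} \mapsto 1-i$. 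A direct computation gives $(1-i)^2 = -2i = i$ in $\H_{\F_3}$, so $\mathcal{H} := \langle Q_8, 1-i\rangle$ has order $16$, with explicit coset $Q_8 (1-i) = \{\pm 1 \pm i, \pm j \pm k\}$.

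Next I would verify pairwise distinctness of the orbits of $1, 1+j, 1+k$ under the action of $Q_8 \times \mathcal{H}$ by examining supports in the basis $\{1, i, j, k\}$. Each element of $\mathcal{H}$ has support either a single basis vector, or contained in $\{1,i\}$, or contained in $\{j,k\}$; in particular $1+j, 1+k \notin \mathcal{H}$, giving $[1]\ne[1+j]$ and $[1]\ne[1+k]$. For the last case I would compute $(1+j)^{-1} u (1+k) = (-1+j) u (1+k)$ for each $u \in Q_8$; a short case analysis produces eight elements of the form $\pm 1 \pm i \pm j \pm k$, each with all four coordinates nonzero and hence outside $\mathcal{H}$, yielding $[1+j] \ne [1+k]$.

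Finally, since $Q_8 \trianglelefteq \H_{\F_3}^\times$ (it is the unique Sylow $2$-subgroup of $\SL_2(\F_3)$, hence characteristic in a normal subgroup of $\GL_2(\F_3)$), each orbit has size $|Q_8||\mathcal{H}|/|Q_8 \cap v \mathcal{H} v^{-1}| = 8 \cdot 16/8 = 16$, and $3 \cdot 16 = 48 = |\H_{\F_3}^\times|$ shows that these three orbits already exhaust $\H_{\F_3}^\times$. The orbits under the full image of $\Z[\zeta_{12},j]^\times$ are coarsenings of the $\mathcal{H}$-orbits, but by Swan's count there are exactly three of them, so no merging occurs and $[1], [1+j], [1+k]$ are the three distinct double cosets. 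The main obstacle is the reliance on Swan's cardinality: a self-contained proof would require identifying the full image of $\Z[\zeta_{12},j]^\times$ in $\H_{\F_3}^\times$ (equivalently, understanding the unit group of the order $\Z[\zeta_{12},j]$ in the totally definite quaternion algebra $(-1,-1)_{\Q(\sqrt{3})}$), a more delicate calculation which we sidestep by appealing to the prior count.
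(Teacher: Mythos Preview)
Your argument is correct, but it takes a different route from the paper and has a structural drawback worth noting.

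The paper proceeds directly: it first enumerates the six left cosets $\H_{\Z}^\times \backslash \H_{\F_3}^\times$, then invokes the unit-group decomposition $\Z[\zeta_{12},j]^\times = \Z[\zeta_{12}]^\times \cdot \langle j \rangle$ from \cite{MOV83}, shows that the image of $\Z[\zeta_{12}]^\times$ in the left-coset space is $\{[1],[1+i]\}$ (using $1-\zeta_{12}$ as you do), and finishes with two explicit products that collapse the six cosets to three. This is self-contained and, immediately after the lemma, the paper remarks that it \emph{recovers} Swan's count $|\SF_1(\Z Q_{24})|=3$.

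Your approach instead uses Swan's count as an input: you exhibit the order-$16$ subgroup $\mathcal{H}=\langle Q_8, 1-i\rangle$ inside the image of $\Z[\zeta_{12},j]^\times$, show that the three $(Q_8,\mathcal{H})$-double cosets of $1$, $1+j$, $1+k$ already partition $\H_{\F_3}^\times$ (each has size $16$ because $Q_8 \trianglelefteq \H_{\F_3}^\times$ and $Q_8 \subseteq \mathcal{H}$), and then appeal to Swan to rule out merging under the full right action. The upshot is that your proof is cleaner in places (no need for the six-coset enumeration or the explicit collapsing identities) but is circular relative to the paper's narrative: the lemma can no longer serve as an independent check of Swan's computation. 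Note, incidentally, that your $\mathcal{H}$ actually \emph{is} the full image---since $\Z[\zeta_{12}]^\times$ maps into $\F_3[i]^\times = \langle 1-i\rangle$ and $1-\zeta_{12}$ hits a generator, the \cite{MOV83} decomposition gives image $\langle 1-i\rangle \cdot \langle j \rangle = \mathcal{H}$---so with one extra line you could drop the appeal to Swan entirely and obtain a fully self-contained proof along your lines.
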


\begin{proof}
If $N: \H_{\F_3} \to \F_3$ is the norm, then $\H_{\F_3}^\times = N^{-1}(\pm 1)$. Now note that $\H_{\Z}^\times = \{\pm 1, \pm i, \pm j, \pm k\}$, and it is easy to check that
\[ \H_{\Z}^\times \backslash \H_{\F_3}^\times = \{ [1],[1+i],[1+j],[1+k],[1+i+j+k],[1-i-j-k]\}.\]
By \cite[Lemma 7.5 (b)]{MOV83}, we have that $\Z[\zeta_{12},j]^\times = \Z[\zeta_{12}]^\times \cdot \langle j \rangle$ and so it remains to determine
\[ \IM(\Z[\zeta_{12},j]^\times \to \H_{\Z}^\times \backslash \H_{\F_3}^\times)= \IM(\Z[\zeta_{12}]^\times \to \H_{\Z}^\times \backslash \H_{\F_3}^\times) \subseteq \{[1],[1+i]\},\]
where the last inclusion follows since $\zeta_{12} \mapsto i$ and $\H_{\Z}^\times \backslash \langle 1, i \rangle = \{[1],[1+i]\}$.

Consider the $n$th cyclotomic polynomial
\[ \Phi_n(x) = \prod_{k \in \Z_{n}^\times}(x-\zeta_{n}^k).\]
It is well-known, and can be shown using M\"{o}bius inversion, that $\Phi_{n}(1)=1$ if $n$ is not a prime power. In particular, $\Phi_{12}(1)=1$ and this implies that $1 - \zeta_{12} \in \Z[\zeta_{12}]^\times$. Hence $[1+i] = [1-i] \in \IM(\Z[\zeta_{12}]^\times \to \H_{\Z}^\times \backslash \H_{\F_3}^\times)$. The result then follows since
\[ j(1+i+j+k)(1+i)=1+k, \quad -j(1-i-j-k)(1+i)=1+j\]
implies that $[1+j] = [1-i-j-k]$, $[1+k] = [1+i+j+k]$ in $\H_{\Z}^\times \backslash \H_{\F_3}^\times \slash \Z[\zeta_{12},j]^\times$.
\end{proof}

This implies that $|\SF_1(\Z Q_{24})|=3$, which recovers the result of Swan. In order to determine the action of $\Aut(Q_{24})$ on $\SF_1(\Z Q_{24})$, first recall from Section \ref{section:action-of-Aut(G)} that 
\[ \Aut(Q_{24}) =\{ \theta_{a,b} : x \mapsto x^a, y \mapsto x^{b}y \mid a \in (\Z/12)^\times, b \in \Z/12\}.\]
If $\mathcal{R}$ denote the Milnor square defined above, then the following is easy to check.

\begin{lemma} \label{lemma:induced-action}
If $a \in (\Z/12)^\times$, $b \in \Z/12$, then $\theta_{a,b} \in \Aut(Q_{24})$ extends to a Milnor square automorphism
\[ \hat{\theta}_{a,b} = (\theta_{a,b}',\theta_{a,b}^1,\theta_{a,b}^2,\bar{\theta}_{a,b}) \in \Aut(\mathcal{R})\]
where, for $a=2a_0+1$, the maps are defined as follows:
\begin{enumerate}[\normalfont (i)]
\item $\theta_{a,b}' \in \Aut(\Z Q_{24}/(x^6+1))$ is given by $x \mapsto x^a$, $y \mapsto x^{b}y$.

	\item
	$\theta_{a,b}^1 \in \Aut(\H_{\Z})$ and $\bar{\theta}_{a,b} \in \Aut(\H_{\F_3})$ are each given by
	\[ i \mapsto i^a = (-1)^{a_0} i, \quad  j \mapsto j^b =
\begin{cases}
	(-1)^{b_0}j, & \text{if $b=2b_0+1$}\\
		(-1)^{b_0}k, & \text{if $b=2b_0$.}
\end{cases}
\]
	\item
	$\theta_{a,b}^2 \in \Aut(\Z[\zeta_{12},j])$ is given by $\zeta_{12} \mapsto \zeta_{12}^a$, $j \mapsto \zeta_{12}^b j$.
\end{enumerate}
\end{lemma}

Since $\mathcal{R}$ is a pullback square with all maps surjective, we can now apply Proposition \ref{prop:milnor-square-action}.
By combining with Lemma \ref{lemma:double-coset-calc}, this implies that there is a bijection
\[ \SF_1(\Z Q_{24})/\Aut(Q_{24}) \leftrightarrow \{[1],[1+j],[1+k] \}/\Aut(Q_{24})\]
where $\theta_{a,b} \in \Aut(Q_{24})$ acts on the double cosets via the action described in Lemma \ref{lemma:induced-action}. In particular
\[ \bar{\theta}_{a,b}([1+j]) = 
\begin{cases}
 	[1+(-1)^{b_0}j] = [1+j], & \text{if $b=2b_0+1$}\\
	[1+(-1)^{b_0}k]=[1+k], & \text{if $b=2b_0$}
\end{cases}
\]
and so $\bar{\theta}_{a,b}$ acts non-trivially when $b$ is even. Hence $|\SF_1(\Z Q_{24})/\Aut(Q_{24})|=2$. This completes the proof of Theorem \ref{thm:main-example}.

\bibliography{biblio.bib}
\bibliographystyle{amsplain}

\end{document}